\NeedsTeXFormat{LaTeX2e}

\documentclass{amsart}

\usepackage{epsfig}

\usepackage{amssymb}
\usepackage{amsmath}


\newtheorem{thm}{Theorem}[section]  
\newtheorem{lem}[thm]{Lemma}	       
\newtheorem{crlr}[thm]{Corollary}      
     
\newtheorem{defin}[thm]{Definition}
\newtheorem{rem}[thm]{Remark}

\makeatletter
\def\theequation{\thesection.\@arabic\c@equation}
\makeatother

\def\Xint#1{\mathchoice
    {\XXint\displaystyle\textstyle{#1}}%
     {\XXint\textstyle\scriptstyle{#1}}%
     {\XXint\scriptstyle\scriptscriptstyle{#1}}%
     {\XXint\scriptstyle\scriptscriptstyle{#1}}%
	\!\int}
\def\XXint#1#2#3{{\setbox0=\hbox{$#1{#2#3}{\int}$}
	\vcenter{\hbox{$#2#3$}}\kern-.5\wd0}}


\begin{document}

\title[Weighted $L^p$ estimates for elliptic equations]{Weighted $\boldsymbol{L^p}$-estimates for elliptic equations with measurable coefficients in nonsmooth domains}

\author[S.-S. Byun, D.K. Palagachev]{Sun-Sig Byun \and Dian K. Palagachev}

\address{Sun-Sig Byun: Department of Mathematics and
               Research Institute of Mathematics,
               Seoul National University, Seoul 151-747, Korea}
\email{byun@snu.ac.kr} 
 
\address{Dian K. Palagachev: Dipartimento di Matematica, Politecnico di Bari, 70 125 Bari, Italy}
\email{palaga@poliba.it; dian@dm.uniba.it}

\keywords{Elliptic equation; Measurable coefficients; Gradient estimates; Muckenhoupt weight; Weighted $L^p$ space; Reifenberg flat domain; Morrey space}

\subjclass[2000]{Primary: 35R05; Secondary: 35J15; 35B45; 35B65; 46E30; 46E35}

\begin{abstract}
We obtain a global weighted $L^p$ estimate for the gradient of the weak solutions to
divergence form elliptic equations with measurable coefficients in a nonsmooth bounded domain. The coefficients are assumed to be merely measurable in one variable and to have small BMO semi-norms in the remaining variables, while the boundary of the domain is supposed to be Reifenberg flat, which goes beyond the category of domains with Lipschitz continuous boundaries. As consequence of the main result, we derive global gradient estimate for the weak solution in the framework of the Morrey spaces which implies global  H\"older continuity of the solution.
\end{abstract}

\maketitle

\section{Introduction}\label{sec1}
\setcounter{equation}{0}
\setcounter{thm}{0}

This work is concerned with weighted $L^p$-regularity of the gradient of weak solutions to the Dirichlet problems regarding elliptic equations with possibly measurable coefficients in nonsmooth domains.
The problems in mind are related to some important variational problems arising in the mechanics of membranes and films of simple nonhomogeneous materials which form a linear laminated medium. In particular, a highly twinned elastic or ferroelectric crystal is a situation where a laminate appears. The equilibrium equations of such linear laminates usually have merely bounded measurable coefficients, see \cite{ATSP1,CKV1,ERS1,FFC1,LN1}.

In this paper we consider a nonhomogeneous elliptic equation in divergence form with bounded measurable coefficients in a very nonsmooth domain beyond the class of domains with Lipschitz continuous boundaries. Precisely, we deal with the Dirichlet problem
\begin{equation}\label{000}
\begin{cases}
D_i \left( a^{ij}(x) D_j u\right)  =  &\!\!\!  D_i f^i(x)  \quad  \textrm{in}\   \Omega, \\
      \hfill   u    =  & \!\!\! 0 \qquad\qquad    \textrm{on} \
\partial  \Omega.
\end{cases}
\end{equation}

Throughout the paper the standard summation notation is employed for $ 1 \leq i, j \leq n$ with $n \geq 2.$ Here $\Omega$ is a bounded open domain in $\mathbb{R}^n$ with  boundary $\partial \Omega$ and $F(x)=\left(f^{1}(x),\ldots,f^n(x)\right)$ is a vector valued function belonging to a suitable weighted $L^{p}$ space. The matrix of the coefficients
$A(x)=\left\{a^{ij}(x)\right\}\colon\ \mathbb{R}^{n} \rightarrow \mathbb{R}^{ n^2 }$ is assumed to be uniformly bounded and uniformly elliptic. Namely, we suppose that there exist positive constants $L$ and $\nu$ such that
\begin{equation}\label{001}
\left\Vert A\right\Vert_{ L^{\infty}(\mathbb{R}^n , \mathbb{R}^{n^2})} \leq L,
\end{equation}
and
\begin{equation}\label{002}
a^{ij}(x)  \xi_i \xi_j \geq \nu |\xi|^2
\end{equation}
for all vectors $ \xi\in \mathbb{R}^{n}$ and for almost every $x \in
\mathbb{R}^{n}.$

It is well known that under these basic assumptions imposed on $A$ and $\Omega,$ the Dirichlet problem \eqref{000} has a unique weak solution if $|F|^2 \in L^1(\Omega).$ That is, the zero extension $\bar{u}$ of $u$ belongs to $H^1_0(\mathbb{R}^n)$ and satisfies the corresponding weak integral formulation
\begin{equation}\label{def003}
\int _{\Omega }  a^{ij} D_{j} u D_{i}  \varphi \;  dx  =
\int _{\Omega } f^{i} D_{i}  \varphi  \;  dx
\end{equation}
for all $\varphi \in H^{1}_{0}(\Omega).$ Moreover, the standard $L^2$-estimate
\begin{equation}\label{004}
\int _{\Omega } |D u|^2  \;  dx
\leq c \int _{\Omega } |F|^2 \;  dx
\end{equation}
holds true for the gradient $Du$ of the weak solution with a positive constant $c$ depending on
$n,$ $\nu,$ $L$ and the Lebesgue measure $|\Omega|$ of the domain $\Omega.$

A natural extension of the $L^2$-estimate \eqref{004} is the following $L^p$-estimate
\begin{equation}\label{005}
\int _{\Omega } |D u|^p  \;  dx
\leq c \int _{\Omega } |F|^p \;  dx
\end{equation}
with exponent $1<p<\infty$
for some $c=c(n, \nu, L, p, |\Omega|)>0.$
Needless to say, for this estimate to be valid, the basic structure requirements \eqref{001} and \eqref{002} on $A$ and the boundedness of $\Omega$ are generally not enough. Some additional regularity condition on $A$ and some finer geometric assumption on $\partial \Omega$ must be imposed. A classical problem is to find such a minimal conditions on $A$ and  $\partial \Omega$ under which the estimate \eqref{005} is true for all $p$ in the range $(1,\infty).$ This is the so-called optimal $W^{1,p}$-regularity (or equivalently, maximal regularity) problem regarding \eqref{000}. As far as such minimal conditions on $A$ and $\Omega$ for the $W^{1,p}$-regularity, we are dealing here  with coefficients matrix $A$ having entries of small bounded mean oscillation (BMO) with respect to some of the variables and Reifenberg flat domains $\Omega,$ respectively. We refer to \cite{JN1,Sa1,R1,T1} for the definitions and the basic properties of the BMO space and the Reifenberg flat domains (see also \cite{BW1,BW2,P0,P1,PS}).

There have been many research activities on the $W^{1,p}$-regularity problem, cf. \cite{AM1,AM2,AQ1,BW1,CP1,Di1,Mo1,P1,Si1} for instance, most of these considering principal coefficients of the elliptic operator belonging to the spaces of functions with vanishing mean oscillation (VMO) or small BMO. However, little is known about that question in the case of \textit{only} measurable coefficients $a^{ij}.$ There is a classical by now example due to Meyers \cite{Me1}, showing that if these are \textit{merely} measurable with respect to \textit{two} independent variables, then the $W^{1,p}$-regularity of \eqref{000} fails in general. Indeed, the Meyers counterexample is easily extendable to the $n$-dimensional case $(n>2)$ of equations with coefficients that are \textit{only} measurable with respect to two of the variables.

According to the recent works in \cite{BRW1,BW2} one can allow the
coefficients $a^{ij}$ to be merely measurable in \textit{one} of the
variables for the estimate \eqref{005} to be true, while the
boundary of the domain $\Omega$ belongs to the class of Reifenberg
flat domains which goes beyond the category of sets with Lipschitz
continuous boundaries. More precisely, the global $L^p$-estimate
holds true for all $p\in(1,\infty)$ if for each point and for each
scale the coefficients are \textit{only} measurable in \textit{one}
variable and are averaged in the sense of small BMO with respect to
the remaining $n-1$ variables, while the boundary $\partial\Omega$
can be trapped into two hyperplanes depending on the scale chosen.
Let us emphasize on the fact that there is no any regularity
assumption with respect to \textit{one}  of the variables and the
boundary of the domain, being Reifenberg flat, can have rough enough
fractal structure (see the excellent survey by Toro~\cite{T1}). 

Very recently, Dong, Kim and Krylov in \cite{D1, DK1, Kr2} obtained a global $L^p$-estimate under similar assumptions as these in \cite{BRW1, BW2}. The approach used there is based on making use of a~priori pointwise estimates and working on the sharp maximal function of the gradient of solutions. This interesting technique is first developed by Krylov in \cite{Kr} and gives a unified approach for both divergence and nondivergence form equations.

The present article is a natural
outgrowth of \cite{BW2} and deals with weighted
$W^{1,p}$-theory for the Dirichlet problem \eqref{000}. In
particular, we derive an extended version of the $L^p$-estimate
\eqref{005} in the settings of the weighted Lebesgue spaces
$L^p_w(\Omega),$ generalizing this way the $W^{1,p}$-regularity
theory recently elaborated in \cite{BRW1,BW2}.
More precisely, we prove that under the same regularity assumptions on $A$ and $\Omega$ as these in \cite{BRW1,BW2},  the following global weighted $W^{1,p}$-regularity
\begin{equation}\label{006}
|F|^2 \in L^{\frac{p}{2}}_w(\Omega) \
    \Longrightarrow \   |D u|^2 \in L^{\frac{p}{2}}_w (\Omega)
\end{equation}
holds
for any $p \in (2,\infty)$ and with a weight $w$ belonging to the Muckenhoupt class $A_{\frac{p}{2}}$ (we refer the reader to Section~\ref{sec2} for the precise definitions and notations). Let us point out that the $W^{1,p}$-regularity derived in \cite{BRW1,BW2} is a special case of \eqref{006} when $w(x)\equiv 1,$ which makes \eqref{006}  a natural extension of the  $W^{1,p}$-theory.

Throughout the paper, the case in mind is when $2 < p <\infty$ and
the prescribed assumption on the free term in \eqref{000} is
$$
|F|^2 \in L^{\frac{p}{2}}_w( \Omega ), \  w \in A_{\frac{p}{2}},\quad 2<p<\infty.
$$
Under this condition, needless to say, the problem \eqref{000} has a unique weak solution $u \in H^1_0(\Omega)$ because $L^{\frac{p}{2}}_w( \Omega ) \subset L^1(\Omega).$ Assuming
that at each point and for each scale the coefficients of \eqref{000} are \textit{only} measurable in \textit{one} variable and are averaged in the sense of small BMO with respect to the remaining $n-1$ variables, and considering domains $\Omega$ with Reifenberg flat boundaries, we will prove that
$$
|Du|^2 \in L^{\frac{p}{2}}_w( \Omega )
$$
with the corresponding gradient estimate
\begin{equation}\label{017}
\Vert |D u|^2 \Vert_{L^{\frac{p}{2}}_w(\Omega)} \leq c \Vert |F|^2 \Vert_{L^{\frac{p}{2}}_w(\Omega)}
\end{equation}
for every $p \in (2, \infty),$ where the constant $c$ is independent of $u$ and $F.$

Similar weighted results have been recently published in \cite{MP1} under the more restrictive assumption on BMO smallness with respect to \textit{all independent variables.}

Our approach in this paper is based on the method of approximation which was developed by Caffarelli and Peral in \cite{CP1} in the context of maximal function technique, and later adopted for operators with discontinuous coefficients and nonsmooth
domains in \cite{BW1,BW2}. We will use maximal functions, the Vitali type covering lemma and
scaling arguments in $L^2$-estimate for the gradient of the weak solution to \eqref{000}, in order to derive
suitable decay estimates for the level sets of the Hardy-Littlewood
maximal function for $|Du|^2$ to increasing levels. After that, the standard procedure of weighted
integration over the level sets gives the desired estimate \eqref{017}. A key point of this approach is comparison with the solutions to a limiting problem, obtained from \eqref{000} by averaging in the variables with respect to which the coefficients have small BMO norms. The new coefficients depend on the remaining \textit{one} variable, and even if these are \textit{only} measurable, it turns out that the limiting problem supports $W^{1,\infty}$-regularity.

We would like to point out that it is also possible to use a very influential method by Acerbi and Mingione in \cite{AM1,AM2,Mi1}.

The paper is organized as follows. In Section~\ref{sec2} we introduce the regularity assumptions on the coefficients $a^{ij}$ and the boundary of the domain $\Omega$ in order to state the main result Theorem~\ref{thm104}. In Section~\ref{sec3} we establish local Lipschitz regularity for the solutions to the limiting problem mentioned above. Section~\ref{sec4} deals with deriving of $L^2$-gradient estimates from an appropriate perturbation theory in analysis. In Section~\ref{sec5} we obtain the optimal gradient estimate \eqref{017} in weighted Lebesgue spaces regarding the Dirichlet problem \eqref{000}. For a particular choice of the weight $w$ and as an outgrowth of our main result, we obtain in Section~\ref{sec6} gradient estimates in the framework of the Morrey spaces which imply global H\"older continuity
of the weak solutions to \eqref{000}, generalizing this way the celebrated results by De~Giorgi~\cite{DG} and Morrey~\cite{Mo0}.

\section{Main result}\label{sec2}
\setcounter{equation}{0}
\setcounter{thm}{0}

We start with the following notations:
\begin{enumerate}
\item The open ball in $\mathbb{R}^{n-1}$ with center $y^{\prime}=(y_1, \cdots,y_{n-1})$ and radius $r>0$
is denoted by $$B^{\prime}_{r}(y^{\prime})=\{x^{\prime}=(x_1, \cdots,x_{n-1}) \in
\mathbb{R}^{n-1}\colon\ |x^{\prime}-y^{\prime}| < r \}.$$
\item The cylinder in $ \mathbb{R}^{n-1} \times \mathbb{R}$ with center $y=
(y^\prime,y_n)$ and size $r>0$ in the $x_n$-axis is denoted
by
$$
C_{r}(y)= B^{\prime}_{r}(y^{\prime}) \times (y_n-r,y_n+r).
$$
If the center is the origin,  we do not specify it and write just $C_r$ for the sake of simplicity.
\item For each fixed $x_{n} \in \mathbb{R}$ and for each bounded subset
$U^\prime$ of $\mathbb{R}^{n-1},$ the integral average of a function $g(\cdot,x_n)$ with respect to $x^\prime$-variables in $U^\prime$ is denoted by
$$\overline{g}_{U^\prime}(x_n) = \Xint-_{U^\prime}
g(x^{\prime},x_n) \;  dx^{\prime}   =  \frac{1}{ |U^\prime| } \int_{ U^\prime }
g(x^{\prime},x_n)   \;  dx^{\prime},
$$
and $|U^\prime|$ stands for the $(n-1)$-dimensional Lebesgue measure of $U^\prime.$
\end{enumerate}

We now state the main assumptions on the data of problem \eqref{000} regarding the coefficients matrix $A(x)$ and the domain $\Omega.$
\begin{defin}\label{def100}
We say that $(A,  \Omega)$ is $(\delta, R)$-\textit{vanishing of codimension 1\/}
if for every point $y \in \Omega$ and for every number $r \in
\left(0, R \right]$ such that
$$
\textrm{dist}\,(y,
\partial \Omega)=\min_{ x \in
\partial \Omega } \textrm{dist}\,(y, x) >  \sqrt{2} \ r,
$$
there exists a coordinate system depending on $y$ and $r,$
whose variables we still denote by $x=(x^{\prime},x_n),$ such that in this new coordinate system $y$ is the origin and
\begin{equation}\label{101}
 \Xint-_{C_{\sqrt{2}r}} \left |A(x^\prime,x_n) -
\overline {  A } _{ B^{\prime  }_{\sqrt{2} r }} (x_n) \right| ^{2} \; dx
\leq \delta^2,
\end{equation}
while, for every point $y \in \Omega$ and for every number $r \in
\left(0, R \right]$ with
$$
\textrm{dist}\,(y,\partial \Omega)=
\min_{ x \in\partial \Omega }\textrm{dist}\,(y,x)=
\textrm{dist}\,(y,x_0) \leq  \sqrt{2} r $$
for some $x_0 \in \partial \Omega,$ there exists a coordinate system depending on $y$ and $r,$ whose
variables we still denote by $x=(x^{\prime},x_n),$ such that in this new coordinate system $x_0$ is the origin,
\begin{equation}\label{102}
C_{3 r} \cap \{x\colon\ x_n> 3 r \delta\}   \  \subset   \
C_{ 3 r }  \cap   \Omega  \   \subset       \ C_{3 r} \cap \{x\colon\ x_n
>-3 r \delta\}
\end{equation}
and
\begin{equation}\label{103}
\Xint-_{ C_{3r } } \left | A(x',x_n) - \overline
{ A} _{ B^{\prime}_{3r }} (x_n) \right| ^{2} \; dx  \leq \delta^2.
\end{equation}
\end{defin}

\begin{rem}\label{remnew}\em
1. By a scaling invariance property (see Lemma~\ref{lem408} below), one can take for simplicity $R=1$ or any other constants bigger than $1.$ On the other hand, $\delta$ is a small positive constant, being invariant under such a
scaling.

\medskip

2. If $(A, \Omega )$ is $(\delta, R)$-vanishing of codimension $1,$ then for each point and for each sufficiently small scale
there is a coordinate system so that the coefficients have small bounded mean oscillation (BMO) in $x^\prime$-directions with \textit{no regularity conditions\/} required with respect to the $x_n$-variable. Regarding the boundary of the domain, it is sufficiently flat in the Reifenberg sense in this new coordinate system. In other words, the codimension $1$ $(\delta, R)$-vanishing property of $(A, \Omega )$ is a general enough condition which is surely satisfied in the particular cases of continuous or VMO coefficients $a^{ij}(x)$ and $C^1$ or Lipschitz continuous boundary $\partial\Omega$ with small Lipschitz constant (cf. \cite{P0,P1}). It is clear that assumptions
\eqref{101} and \eqref{103} allow quite arbitrary discontinuities of $a^{ij}(x)$ in \textit{one} direction, whereas the discontinuities with respect to the remaining variables are controlled in terms of small BMO (think, for example, for small multipliers of the Heaviside step function). Moreover, the Reifenberg flatness \eqref{102} extends the $W^{1,p}$-regularity of \eqref{000} to the case of domains with rough boundaries of fractal nature (cf. \cite{T1}).

\medskip

3. The Reifenberg flatness condition \eqref{102} implies that the boundary $\partial\Omega$ satisfies the so-called (A)-condition (see \cite{LU,Cm}). Namely, setting $B_r(x_0)$ for the ball of radius $r$ and centered at $x_0,$ there exists a positive constant $K_\Omega(\delta)$ such that the Lebesgue measure of $B_r(x_0)\cap\Omega$ is comparable to that of $B_r(x_0):$
\begin{equation}
\label{mds} K_\Omega(\delta) |B_r(x_0)|\leq |B_r(x_0)\cap\Omega|\leq
\big(1-K_\Omega(\delta)\big) |B_r(x_0)|\quad \text{for each}\
x_0\in\partial\Omega.
\end{equation}

\medskip

4. The numbers $\sqrt{2}r$ and $3r$ above are selected for our purpose. The reason for this selection is that we need to take the size of a cylinder $C_r(y)$
large enough to contain its rotations  in any directions.
\end{rem}

\medskip

Before stating our main result, let us recall the definition of the Muckenhoupt classes
$A_s,$ $1<s<\infty,$ and the respective weighted Lebesgue spaces $L^{s}_w (\Omega).$
A positive locally integrable function $w$ on $\mathbb{R}^n,$ $w \in L^1_{loc}(\mathbb{R}^n),$ is called to be a weight. Then, given $s\in(1,\infty),$ this weight belongs to the Muckenhoupt class $A_{s}$ if
$$
[w]_s= \sup_{y=(y^\prime,y_n) \in \mathbb{R}^n} \sup_{r >0}  \left( \Xint-_{C_r(y)}  w(x) \;  dx \right) \left(\Xint-_{C_r(y)} w(x)^{\frac{-1}{s-1}} \;  dx \right)^{s-1}  <  \infty,
$$
where $\int \hspace{ -0.3cm } -$ is the integral average and the supremum is taken over all cylinders
$$
C_r(y)=\{x=(x^\prime,x_n) \in \mathbb{R}^n\colon\ |x^\prime-y^\prime|<r, |x_n -y_n|<r\}.
$$
We note that the $A_s$ classes are nested, that is, $A_{s_1} \subset A_{s_2}$ if $1 < s_1 \leq  s_2 < \infty.$ To give an example, consider the function
$$
w_\alpha(x) = |x|^\alpha,  \quad x \in \mathbb{R}^{n}.
$$
Then $w_\alpha \in A_{s}$ if and only if $-n < \alpha < n(s-1).$ Thus, $w_\alpha$ is a typical weight which can be considered in the present paper.

For each measurable set $E\subset \mathbb{R}^n $ and a weight $w,$ we set
$$
w(E) = \int_{E} w(x)\; dx.
$$

In what follows, we will use the following important properties of the $A_s$ weights.
\begin{lem}\label{lem010}
{\em (\cite{To1})} Let $w\in A_s$ for some $1< s <\infty,$ and let
$C_r(y)$ be the cylinder $C_r(y)$ centered at $y=(y^\prime,y_n) \in
\Omega$ and of size $r>0.$ Then we have
$$
\frac{1}{\gamma_1} \left[\frac{|C_r(y) \cap \Omega|}{|C_r(y)|}\right]^s  \leq     \frac{w\left(C_r(y)\right)} {w\left(C_r(y) \cap \Omega\right)}
 \leq \gamma_1 \left[\frac{|C_r(y) \cap \Omega|}{|C_r(y)|}\right]^\beta,
$$
where $\gamma_1$ and $\beta > 0$ are constants depending only on $[w]_s$ and $n.$
\end{lem}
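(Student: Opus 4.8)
The plan is to view the estimate as a repackaging of two standard properties of Muckenhoupt weights and to specialize them to $Q=C_r(y)$ and $E=C_r(y)\cap\Omega$. Since $y\in\Omega$ we have $|E|>0$ and $E\subseteq Q$, so it suffices to prove, for any $w\in A_s$ and any measurable $E\subseteq Q$ with $|E|>0$, the two inequalities
\[
\left(\frac{|E|}{|Q|}\right)^{s}\le [w]_s\,\frac{w(E)}{w(Q)}
\qquad\text{and}\qquad
\frac{w(E)}{w(Q)}\le \gamma_1\left(\frac{|E|}{|Q|}\right)^{\beta},
\]
with $\gamma_1$ and $\beta\in(0,1)$ depending only on $[w]_s$ and $n$; these sandwich $w(E)/w(Q)$ between two powers of $|E|/|Q|$ and so give the two-sided comparison of $w(C_r(y))$ with $w(C_r(y)\cap\Omega)$ claimed in the lemma.

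For the first inequality I would use the identity $1=w^{1/s}w^{-1/s}$ on $E$ and H\"older's inequality with exponents $s$ and $s/(s-1)$:
\[
|E|\le\left(\int_E w\,dx\right)^{1/s}\left(\int_E w^{-\frac{1}{s-1}}\,dx\right)^{\frac{s-1}{s}}
\le w(E)^{1/s}\left(\int_Q w^{-\frac{1}{s-1}}\,dx\right)^{\frac{s-1}{s}}.
\]
The defining $A_s$ inequality for the cylinder $Q$ gives $\int_Q w^{-\frac{1}{s-1}}\,dx\le [w]_s^{\frac{1}{s-1}}|Q|^{\frac{s}{s-1}}w(Q)^{-\frac{1}{s-1}}$, and substituting this into the last display yields $|E|/|Q|\le [w]_s^{1/s}\left(w(E)/w(Q)\right)^{1/s}$, which is the first inequality.

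For the second inequality I would invoke the self-improvement property of the class $A_s$: by the Coifman--Fefferman reverse H\"older theorem, see \cite{To1}, there are $\varepsilon>0$ and $c_0>0$ depending only on $[w]_s$ and $n$ such that $\left(\frac{1}{|Q|}\int_Q w^{1+\varepsilon}\,dx\right)^{\frac{1}{1+\varepsilon}}\le c_0\,\frac{1}{|Q|}\int_Q w\,dx$ for all cylinders $Q$. One more application of H\"older's inequality on $E$, with exponents $1+\varepsilon$ and $(1+\varepsilon)/\varepsilon$, then gives
\[
w(E)=\int_E w\,dx\le |E|^{\frac{\varepsilon}{1+\varepsilon}}\left(\int_Q w^{1+\varepsilon}\,dx\right)^{\frac{1}{1+\varepsilon}}
\le c_0\left(\frac{|E|}{|Q|}\right)^{\frac{\varepsilon}{1+\varepsilon}}w(Q),
\]
so the second inequality holds with $\beta=\varepsilon/(1+\varepsilon)$ and $\gamma_1=\max\{c_0,[w]_s\}$; together with the first inequality and the choice $Q=C_r(y)$, $E=C_r(y)\cap\Omega$ this completes the proof.

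The only step that is not an immediate consequence of H\"older's inequality and the definition of $A_s$ is the reverse H\"older self-improvement used in the last paragraph, and it is precisely this step that manufactures the exponent $\beta$ together with its dependence on $[w]_s$ and $n$. I therefore expect it to be the one genuine obstacle; as the lemma is quoted from \cite{To1}, in the write-up it is enough to cite that reference rather than reproduce the argument.
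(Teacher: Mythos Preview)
Your argument is correct: the lower bound is exactly the elementary H\"older/$A_s$ computation, and the upper bound is the standard consequence of the Coifman--Fefferman reverse H\"older inequality, both specialized to $Q=C_r(y)$ and $E=C_r(y)\cap\Omega$. This is precisely the route taken in the reference the paper cites.

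As for comparison with the paper: there is nothing to compare, since the paper does not supply a proof of this lemma at all; it is simply quoted from \cite{To1}. Your write-up therefore goes beyond what the paper does. One small remark: the displayed inequality in the paper is stated with $w(C_r(y))/w(C_r(y)\cap\Omega)$ in the middle, whereas what you prove (and what is actually used later, e.g.\ in the proof of Lemma~\ref{lem407}) is the two-sided control of $w(E)/w(Q)$ by powers of $|E|/|Q|$; the two formulations are equivalent after inverting the ratio, and indeed the application in Lemma~\ref{lem407} uses the form $w(Q)\le c\,(|Q|/|E|)^{\beta}\,w(E)$. You handle this correctly by proving the clean $w(E)/w(Q)$ form and noting that it yields the claimed comparison.
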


We next introduce the weighted Lebesgue spaces under consideration in this paper.
Given a weight $w \in A_s,$
$1<s<\infty,$ the weighted Lebesgue space $L^s_w(\Omega)$ is the
set of all measurable functions $h\colon\ \Omega \to \mathbb{R}$
satisfying
$$
\left\|h \right\|_{L^s_w(\Omega)} =\left( \int_{\Omega} |h(x)|^s  \ w(x) \; dx \right)^{\frac{1}{s}} < \infty.
$$

\medskip

We are in a position now to state the main result of the paper.
\begin{thm}\label{thm104}
Given a number $p \in (2, \infty)$ and a weight $w \in A_{\frac{p}{2}},$ there exist a small
positive constant $\delta=\delta(\nu, L, n, p,$ $[w]_{\frac{p}{2}}, \Omega)$ and a positive constant $c(\nu, L, n, p, [w]_{\frac{p}{2}}, \Omega)$ such that if $(A, \Omega)$ is
($\delta,R)$-vanishing of codimension $1$ and $|F|^2 \in L^{\frac{p}{2}}_{w}(\Omega),$ then the unique
weak solution $u \in H^1_0(\Omega)$ of \eqref{000} satisfies $|Du|^2 \in L^{\frac{p}{2}}_{w}(\Omega)$ with the estimate
\begin{equation}\label{105}
\int_{\Omega} |Du|^{p} w(x) \; dx \leq c \int_{\Omega} |F|^{p} w(x) \; dx.
\end{equation}
\end{thm}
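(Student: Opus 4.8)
The plan is to follow the Caffarelli--Peral perturbation scheme combined with weighted measure estimates on the level sets of the Hardy--Littlewood maximal function of $|Du|^2$. First I would reduce, by the scaling invariance of Lemma~\ref{lem408}, to the normalized situation $R=1$ and a convenient normalization of $|F|^2$ and $|Du|^2$ so that one may work at unit scale. Next, the local comparison estimates are the heart of the argument: using the results of Sections~\ref{sec3}--\ref{sec4}, for each point $y\in\Omega$ and each small radius $r$ one compares the weak solution $u$ with the solution $v$ of the limiting problem obtained by averaging the coefficients in the $x'$-directions. Since the averaged coefficients depend only on $x_n$ — and even though they are merely measurable in that one variable — the limiting problem enjoys $W^{1,\infty}$-regularity (interior and up to the flat boundary). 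This yields, when $\delta$ is chosen small enough, a decay estimate of the form
\begin{equation}\label{eq:plan-comp}
\Xint-_{C_r(y)\cap\Omega} |Du-Dv|^2\,dx \le \varepsilon\,\Xint-_{C_{2r}(y)\cap\Omega}|Du|^2\,dx + c(\varepsilon)\,\Xint-_{C_{2r}(y)\cap\Omega}|F|^2\,dx,
\end{equation}
with $\|Dv\|_{L^\infty}^2$ controlled by the same averaged right-hand sides. Both the interior case $\mathrm{dist}(y,\partial\Omega)>\sqrt2\,r$ and the boundary case are handled, using \eqref{101}--\eqref{103} respectively, and the Reifenberg flatness \eqref{102}.

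The second step converts \eqref{eq:plan-comp} into an information on the super-level sets of $\mathcal M(|Du|^2)$, the restricted Hardy--Littlewood maximal function. Via a Vitali-type covering lemma (in the spirit of \cite{CP1,BW1,BW2}) one shows: there are constants so that for all large $\lambda$,
\begin{equation}\label{eq:plan-density}
|\{x\in\Omega:\ \mathcal M(|Du|^2)>\lambda\}| \le \varepsilon_1\,|\{x\in\Omega:\ \mathcal M(|Du|^2)>\lambda\}\cap\text{(cylinder)}|
\end{equation}
whenever the cylinder also meets $\{\mathcal M(|Du|^2)>N\lambda\}\cup\{\mathcal M(|F|^2)>\lambda\}$; here the $L^\infty$-bound for $Dv$ is what forces the threshold $N\lambda$. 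This is the standard ``good-$\lambda$'' mechanism adapted to the geometry of $\Omega$, where \eqref{mds} (the (A)-condition implied by Reifenberg flatness) guarantees the covering argument closes.

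The third step is the passage from the density estimate to the weighted norm bound. Here I would replace Lebesgue measure by the measure $w\,dx$. The crucial point is that, by Lemma~\ref{lem010}, the Muckenhoupt weight $w\in A_{p/2}$ comparison inequality lets one upgrade \eqref{eq:plan-density} to
\begin{equation}\label{eq:plan-wdensity}
w\big(\{\mathcal M(|Du|^2)>N\lambda\}\big) \le \varepsilon_2\, w\big(\{\mathcal M(|Du|^2)>\lambda\}\big) + w\big(\{\mathcal M(|F|^2)>\lambda\}\big),
\end{equation}
with $\varepsilon_2\to0$ as $\varepsilon_1\to0$, using also the doubling property of $A_{p/2}$ weights. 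Choosing $\delta$ (hence $\varepsilon,\varepsilon_1,\varepsilon_2$) small depending on $p$ and $[w]_{p/2}$, one multiplies \eqref{eq:plan-wdensity} by $\lambda^{p/2-1}$ and sums over the dyadic scale $\lambda=N^k$; the geometric-series argument (exactly as in the unweighted $L^p$ theory, but with $w$-measures) yields
$$
\int_\Omega \big(\mathcal M(|Du|^2)\big)^{p/2} w\,dx \le c \int_\Omega \big(\mathcal M(|F|^2)\big)^{p/2} w\,dx.
$$
Since $w\in A_{p/2}$ the maximal operator $\mathcal M$ is bounded on $L^{p/2}_w$, so the left side dominates $\int_\Omega|Du|^p w\,dx$ and the right side is dominated by $\int_\Omega|F|^p w\,dx$, giving \eqref{105}. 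Finally, to make the last summation finite one first proves \eqref{105} with $Du$ replaced by $\min(|Du|,M)$ (a truncation) and lets $M\to\infty$, or equivalently invokes an a priori assumption $|Du|^2\in L^{p/2}_w$ removed by approximation; this standard device also delivers the qualitative conclusion $|Du|^2\in L^{p/2}_w(\Omega)$.

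The main obstacle I anticipate is establishing the $W^{1,\infty}$-regularity for the limiting problem with coefficients merely measurable in $x_n$ up to the \emph{Reifenberg-flat} boundary portion — that is where one must exploit the special $x_n$-only structure together with flatness \eqref{102}, and it is the step that genuinely uses ``measurable in one variable'' rather than continuity; the subsequent weighted bookkeeping via Lemma~\ref{lem010} is technically delicate but conceptually routine.
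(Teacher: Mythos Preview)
Your plan is correct and mirrors the paper's proof: the comparison estimates of Sections~\ref{sec3}--\ref{sec4}, a weighted Vitali/good-$\lambda$ argument (the paper's Lemmas~\ref{lem407}--\ref{lem409}), iteration to the decay estimate (Lemma~\ref{lem428}), and the geometric-series summation using the $L^{p/2}_w$-boundedness of $\mathcal M$ (Lemma~\ref{lem401}). The only differences are cosmetic --- the paper formulates the Vitali covering lemma directly in weighted form and closes via the Banach inverse mapping theorem rather than truncation --- and your display \eqref{eq:plan-density} is garbled as written, but the intended density/contrapositive statement is exactly Lemma~\ref{lem409}.
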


Let us point out that the approach employed in the paper is applicable both to equations
including lower-order terms and to elliptic systems.
In order to fix the ideas and to avoid unessential technicalities, we limit ourselves to the equations of principal type as the one considered in \eqref{000}.

\section{Lipschitz regularity for a limiting problem}\label{sec3}
\setcounter{equation}{0}
\setcounter{thm}{0}

In this section we will prove local Lipschitz regularity for a limiting problem which guarantees a fundamental step to derive \eqref{105}. For,  consider the following elliptic equation with  coefficients depending on one spatial variable, say $x_n:$
\begin{equation}\label{200}
D_i \left( \overline{a}^{ij}(x_n) D_{j}v\right)  =  0  \quad   \textrm{in}\quad
C_2 = \{(x^\prime,x_n)\colon\ |x^\prime|<2, |x_n|<2\}.
\end{equation}
Of course, the coefficients $\overline{a}^{ij}$ are assumed to satisfy the basic structure conditions \eqref{001} and \eqref{002}, but these are allowed to be \textit{only} measurable. Indeed, the solutions under consideration are defined in the weak sense as usual.
Namely, we say that $v \in H^1(C_2)$ is a weak solution of \eqref{200} if
$$
\int _{C_2 }  \overline{a}^{ij} D_{j} v  D_{i} \varphi \;  dx  =0
$$
for all $\varphi \in  H^1_{0}(C_2).$

Throughout this section we
denote by $c$ a positive constant that can be computed in terms of known quantities such as $\nu,$ $L$ and $n.$ The standard local $L^2$-estimate for \eqref{200} is
$$
\Vert D v \Vert_{L^2(C_1)} \leq c  \Vert v \Vert_{L^2(C_2)}.
$$

Since the coefficients in \eqref{200} depend \textit{only} on $x_n$ and the equation is linear, one is allowed to differentiate \eqref{200} up to any order with respect to $x^\prime=(x_1, \cdots,x_{n-1})$ variables. This observation gives the following lemma.
\begin{lem}\label{lem203}
If $v$ is  a weak solution of \eqref{200}, then so are $D^\prime v= D_{x^\prime }  v$ with the estimate
$$
\Vert D D^\prime v \Vert_{L^2(C_1)} \leq c  \Vert v \Vert_{L^2(C_2)}.
$$
\end{lem}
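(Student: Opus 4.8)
The plan is to exploit the translation invariance of equation \eqref{200} in the $x'$-directions. Since $\overline{a}^{ij}$ depends only on $x_n$, the difference quotient
$$
v^h_k(x) = \frac{v(x + h e_k) - v(x)}{h}, \qquad 1 \le k \le n-1,
$$
is again a weak solution of \eqref{200} on a slightly smaller cylinder whenever $|h|$ is small, because translating the test function $\varphi$ by $-h e_k$ in the weak formulation leaves the coefficients untouched. First I would fix intermediate radii, say work on $C_{3/2}$ and $C_{7/4}$, and apply the standard interior Caccioppoli / $L^2$-estimate (quoted just before the lemma) to $v^h_k$ on these cylinders: with a cutoff $\eta$ equal to $1$ on $C_1$ and supported in $C_{3/2}$, testing the equation for $v^h_k$ with $\eta^2 v^h_k$ and using ellipticity \eqref{002} and boundedness \eqref{001} yields
$$
\int_{C_1} |D v^h_k|^2 \, dx \le c \int_{C_{3/2}} |v^h_k|^2 \, dx.
$$

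The next step is the classical difference-quotient lemma: since $v \in H^1(C_2)$, the $L^2$-norm of $v^h_k$ over $C_{3/2}$ is bounded uniformly in $h$ by $\|D_k v\|_{L^2(C_{7/4})} \le \|v\|_{H^1(C_{7/4})}$, and in fact one controls $\|v\|_{H^1(C_{7/4})}$ by $\|v\|_{L^2(C_2)}$ via the interior estimate again. Hence $\{D v^h_k\}$ is bounded in $L^2(C_1)$ uniformly in $h$; passing to a weak limit as $h \to 0$ identifies the limit as $D D_k v$, so $D_k v \in H^1(C_1)$ with
$$
\|D D_k v\|_{L^2(C_1)} \le c \|v\|_{L^2(C_2)}.
$$
To see that $D_k v$ is itself a weak solution of \eqref{200}, one passes to the limit in the weak formulation satisfied by $v^h_k$ against an arbitrary $\varphi \in H^1_0(C_1)$, using the weak $H^1$-convergence of $v^h_k$ together with strong $L^2$-convergence $v^h_k \to D_k v$ and the fixed $x_n$-dependence of the coefficients; this is where one needs $\varphi$ compactly supported away from $\partial C_2$ so that the translated test functions remain admissible.

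The main technical obstacle is bookkeeping the chain of shrinking cylinders so that all translations $x \mapsto x + h e_k$ stay inside $C_2$ for $|h|$ small and the Caccioppoli estimate can be iterated, while keeping the final constant $c$ dependent only on $\nu, L, n$. This is purely routine once the radii are chosen, so I anticipate no real difficulty — the only thing to be slightly careful about is that differentiation is allowed in the $x'$-directions \emph{only}, since no regularity in $x_n$ is assumed; the argument must never attempt to difference in $e_n$. With this, the lemma follows, and by iterating it one obtains $D^\prime v$ is a weak solution whenever $v$ is, with the stated estimate; higher $x'$-derivatives follow by repetition, which is what the later Lipschitz regularity for the limiting problem will use.
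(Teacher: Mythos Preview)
Your proposal is correct and is precisely the standard difference-quotient justification of what the paper states without proof: the paper merely remarks, just before the lemma, that since the coefficients depend only on $x_n$ one may differentiate \eqref{200} in the $x'$-variables, and then combines this with the interior $L^2$-estimate $\|Dv\|_{L^2(C_1)}\le c\|v\|_{L^2(C_2)}$ to obtain the claim. Your argument supplies the rigorous details (translation invariance, Caccioppoli, uniform bounds on $Dv^h_k$, weak limit) behind that one-line observation, so the approaches coincide.
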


Moreover, we have the following improving-of-regularity result.
\begin{lem}\label{lem204}
If $v$ is a weak solution of \eqref{200}, then $D v$ belongs to $L^{2^{*}}(C_1)$ with the estimate
$$
\Vert  D v \Vert_{L^{2^{*}}(C_1)}  \leq c   \Vert v \Vert_{L^2(C_2)},
$$
where $2^{*}$ is the Sobolev conjugate of $2,$ that is, $2^*=\frac{2n}{n-2}$ if $n>2$ while $2^*$ is an arbitrary large number if $n=2.$
\end{lem}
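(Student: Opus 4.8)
The plan is to bootstrap from Lemma~\ref{lem203}, which tells us that all tangential derivatives $D^\prime v$ are again weak solutions of \eqref{200} with good $L^2$ bounds, up to the Sobolev embedding $H^1\hookrightarrow L^{2^*}$. The key structural observation is that for solutions of \eqref{200} the \emph{full} gradient $Dv$ can be controlled by the tangential part of the Hessian alone, because the equation itself recovers the normal derivative $D_n v$ from the others. First I would work on the scale $C_{3/2}$ instead of $C_1$ directly (so that there is room for the nested cylinders $C_1\subset C_{3/2}\subset C_2$), and split $Dv=(D^\prime v, D_n v)$.

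For the tangential components $D^\prime v$: since each $D_k v$, $1\le k\le n-1$, is a weak solution of \eqref{200} in $C_2$, the interior $H^1$-estimate of Lemma~\ref{lem203} (applied on the pair $C_{3/2}\subset C_2$) gives $D(D^\prime v)\in L^2(C_{3/2})$, and then the Sobolev inequality $\|D^\prime v\|_{L^{2^*}(C_{3/2})}\le c\,\|D^\prime v\|_{H^1(C_{3/2})}\le c\,\|v\|_{L^2(C_2)}$ — here I need to also fold in the zeroth-order term $\|D^\prime v\|_{L^2}\le c\|v\|_{L^2(C_2)}$, which is the standard Caccioppoli/energy bound from Lemma~\ref{lem203}. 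So the tangential part of the gradient is already in $L^{2^*}$ with the right estimate.

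For the normal component $D_n v$: rewrite the equation \eqref{200} in the form $D_n(\overline a^{nn}(x_n) D_n v) = -\sum_{(i,j)\ne(n,n)} D_i(\overline a^{ij}(x_n) D_j v)$, where on the right every term involves at least one tangential derivative $D^\prime v$ (when $i=n$, the factor $D_j v$ has $j\le n-1$; when $i\le n-1$, we may integrate by parts moving $D_i$ onto the test function). Ellipticity \eqref{002} gives $\overline a^{nn}(x_n)\ge\nu>0$, so one can solve for $D_n v$: fixing $x^\prime$ and integrating in $x_n$, $D_n v(x^\prime,x_n) = \frac{1}{\overline a^{nn}(x_n)}\Big(g(x^\prime) - \sum_{j<n}\overline a^{nj}(x_n)D_j v + \int \cdots\Big)$, where the remaining terms are built from $D^\prime v$ and its $x_n$-antiderivative. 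The cleanest way to make this rigorous is to test the weak formulation with $\varphi$ supported in $C_{3/2}$ and observe that, modulo tangential derivatives already controlled in $L^{2^*}$, the map $x_n\mapsto \overline a^{nn}(x_n)D_n v(x^\prime,x_n)$ differs from an $L^{2^*}_{x_n}$ function by a constant in $x_n$ (for a.e.\ $x^\prime$); that constant is itself controlled by averaging and the $L^2$ bound on $Dv$. Dividing by $\overline a^{nn}\in[\nu,L]$ preserves $L^{2^*}$, whence $D_n v\in L^{2^*}(C_1)$ with $\|D_n v\|_{L^{2^*}(C_1)}\le c\,\big(\|D^\prime v\|_{L^{2^*}(C_{3/2})}+\|Dv\|_{L^2(C_{3/2})}\big)\le c\,\|v\|_{L^2(C_2)}$.

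The main obstacle I anticipate is the rigorous handling of the normal derivative: the coefficients $\overline a^{ij}(x_n)$ are \emph{only measurable} in $x_n$, so one cannot differentiate in $x_n$ or integrate by parts in that direction, and $D_n v$ need not have a weak $x_n$-derivative. The argument must therefore stay at the level of the (first-order) equation, exploiting that $D_n(\overline a^{nn}D_n v)$ equals a \emph{distributional} $x^\prime$-derivative of things already known to lie in $L^{2^*}$, so that $\overline a^{nn}D_n v$ has an $L^{2^*}$-controlled $x_n$-primitive structure — i.e.\ it is (tangential-derivative) plus ($x_n$-constant), with both pieces estimated. Once this integrability transfer through the bad variable is set up carefully, combining the tangential and normal estimates yields $\|Dv\|_{L^{2^*}(C_1)}\le c\,\|v\|_{L^2(C_2)}$, which is the claim.
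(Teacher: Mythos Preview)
Your handling of the tangential derivatives $D'v$ is correct and matches the paper exactly. The problem is with $D_n v$. Your proposed route---integrate the equation in $x_n$ so that $\overline a^{nn}D_n v$ equals ``something in $L^{2^*}$'' plus a function $g(x')$ constant in $x_n$---does not close. First, the $x_n$-antiderivative of an $L^2(C_{3/2})$ function need not lie in $L^{2^*}$ in $n$ variables: integrating in one direction gains integrability only in that direction, not across the $(n-1)$-dimensional $x'$-slab. Second, the constant-in-$x_n$ piece $g(x')$ is essentially a trace of $\overline a^{nn}D_n v + \sum_{j<n}\overline a^{nj}D_j v$ on a fixed $x_n$-slice, and from the $L^2$ bound on $Dv$ alone you have no $L^{2^*}_{x'}$ control of such a trace (indeed, no well-defined trace at all). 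So the ``integrability transfer through the bad variable'' you allude to is precisely the gap.

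The paper fixes this by showing that the flux
\[
\sigma_n(v)\;=\;\overline a^{nn}(x_n)D_n v+\sum_{j\neq n}\overline a^{nj}(x_n)D_j v
\]
lies in $H^1(C_1)$, not merely that $D_n\sigma_n(v)\in L^2$. The equation in the form $D_n\sigma_n(v)=-\sum_{i\neq n}\overline a^{ij}D_{ij}v$ gives $D_n\sigma_n(v)\in L^2$ by Lemma~\ref{lem203}; and for $i\neq n$ one differentiates $\sigma_n(v)$ directly (the coefficients depend only on $x_n$, so $D_i$ passes through) to get $D_i\sigma_n(v)=\overline a^{nn}D_{ni}v+\sum_{j\neq n}\overline a^{nj}D_{ij}v\in L^2$, again by Lemma~\ref{lem203}. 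Thus $\sigma_n(v)\in H^1(C_1)\hookrightarrow L^{2^*}(C_1)$ by Sobolev embedding, and then $\overline a^{nn}D_n v=\sigma_n(v)-\sum_{j\neq n}\overline a^{nj}D_j v\in L^{2^*}$, whence $D_n v\in L^{2^*}$ by $\overline a^{nn}\ge\nu$. The key idea you are missing is to exploit the \emph{tangential} weak derivatives of $\sigma_n(v)$ as well, so that the full Sobolev embedding applies; your one-variable integration only uses the normal derivative of $\sigma_n(v)$ and that is not enough.
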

\begin{proof}
Let $v$ be a weak solution of \eqref{200}. Then by Lemma~\ref{lem203}, $D D^\prime v  \in  L^2(C_2)$ with the estimate
$$
\Vert  D D^\prime  v \Vert_{L^2(C_1)} \leq c  \Vert v \Vert_{L^2(C_2)},
$$
whence, the Gagliardo--Nirenberg--Sobolev inequality implies that
\begin{equation}\label{206}
\Vert D^\prime v  \Vert_{L^{2^*}(C_1)} \leq c \Vert v \Vert_{L^2(C_2)}.
\end{equation}

We next want to show that $D_{n} v$ belongs to $L^{2^*}(C_1).$ To do this, remember first of all  that the coefficients $\overline{a} ^{ij}$ depend \textit{only} on $x_n$
and this rewrites the equation \eqref{200} in the form
\begin{align}\label{207}
D_{n} \left(  \overline{a}^{nn} D_{n}v   +  \sum_{ j\neq n }  \overline{a}^{nj}D_{j}v \right)  =\ & -\sum_{i\neq n}  {D_{i}} (\overline{a} ^{ij }D_{j} v) \\
\nonumber
   = \  &  -\sum_{i\neq n}  \overline{a} ^{ij }D_{ij} v \quad \textrm{in}\ C_2
\end{align}
in weak sense. Defining
\begin{equation}\label{208}
\sigma_n(v)= \overline{a}^{nn} D_{n}v   +  \sum_{ j\neq n }  \overline{a}^{nj}D_{j}v,
\end{equation}
it follows from \eqref{001}, Lemma~\ref{lem203} and \eqref{208} that
$$
D_i (\sigma_n(v))=\overline{a}^{nn} D_{n_i}v   +  \sum_{ j\neq n }  \overline{a}^{nj}D_{ij}v \in L^2(C_1)
$$
for each $i\neq n$ with the estimate
\begin{equation}\label{210}
\Vert D_i( \sigma_n(v))\Vert_{L^2(C_1)} \leq c \Vert v\Vert_{L^2(C_2)}.
\end{equation}
On the other hand, \eqref{001}, Lemma~\ref{lem203} and \eqref{207} yield
$$
D_n (\sigma_n(v))=-\sum_{i\neq n}  \overline{a} ^{ij }D_{ij} v \in L^2(C_1)
$$
and
\begin{equation}\label{212}
\Vert D_n( \sigma_n(v))\Vert_{L^2(C_1)} \leq c \Vert v\Vert_{L^2(C_2)}.
\end{equation}
It follows from \eqref{210} and \eqref{212} that
$$
\sigma_n(v) \in H^1(C_1)
$$
with the estimate
$$
\Vert \sigma_n(v)  \Vert_{H^1(C_1)} \leq c  \Vert v \Vert_{L^2(C_2)},
$$
and applying once again the Gagliardo--Nirenberg--Sobolev inequality we get
\begin{equation}\label{215}
\sigma_n(v)  \in L^{2^*}(C_1)
\end{equation}
with
$$
\Vert \sigma_n(v)  \Vert_{L^{2^*}(C_1)}  \leq c   \Vert v \Vert_{H^1(C_2)}  \leq  c   \Vert v \Vert_{L^2(C_2)}.
$$
On the other hand,  we have
$$
\overline{a}^{nn}D_{n}v = -\sum_{ j\neq n }  \overline{a}^{nj} D_{j}v +  \sigma^i_n(v)
$$
by \eqref{208} and \eqref{001}, \eqref{206} and \eqref{215} give
$$
\overline{a}^{nn}D_{n} v \in L^{2^*} (C_1).
$$

We use the basic structure conditions \eqref{001} and \eqref{002} to obtain that
$ D_{ n } v \in L^{2^*}( C_1 )$ with
\begin{equation}\label{219}
\Vert D_{n} v \Vert_{L^{2^*}(C_1)} \leq c   \Vert v \Vert_{L^2(C_2)}.
\end{equation}
The claim follows from \eqref{206} and \eqref{219} and this completes the proof.
\end{proof}

We will prove now interior $W^{1,\infty}$-regularity for the solution of \eqref{200} by employing an iteration argument.
\begin{lem}\label{lem220}
If $v$ is a weak solution of \eqref{200}, then $v \in W^{1,\infty}(C_1)$ and there exists a constant $c,$ independent of $v,$ such that
\begin{equation}\label{220}
\Vert  D v \Vert_{L^{\infty}(C_1)} \leq c  \Vert v \Vert_{L^2(C_2)}.
\end{equation}
\end{lem}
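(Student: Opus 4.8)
The plan is to bootstrap the $L^{2^*}$-regularity of Lemma~\ref{lem204} up to $L^\infty$ by iterating the gain in integrability along the tangential directions. The key observation is that, since the coefficients $\overline a^{ij}$ depend only on $x_n$, every tangential derivative $D^\prime v,$ $D^\prime D^\prime v,$ and more generally every $D^{\prime\alpha}v$ for a multi-index $\alpha$ in the $x^\prime$-variables, is again a weak solution of \eqref{200} (this is Lemma~\ref{lem203}, applied repeatedly). Moreover, the argument producing Lemma~\ref{lem204} applies \emph{verbatim} not just to $v$ but to any such tangential derivative $w=D^{\prime\alpha}v$: one has $\|Dw\|_{L^{2^*}(C_1)}\le c\|w\|_{L^2(C_2)}$, because the proof only used that $w$ solves \eqref{200} together with the structure conditions \eqref{001}--\eqref{002}. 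Combining this with the fact that full gradients of tangential derivatives again control tangential derivatives of one higher order, I would set up an iteration that raises the exponent from $2$ to $2^*$, then to $(2^*)^*$, and so on.

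More precisely, first I would show by induction on $k$ that $D^{\prime\alpha}v\in L^{2}(C_{\rho})$ (and indeed in $H^1(C_\rho)$) for every multi-index $\alpha$ in the tangential variables, with estimates in terms of $\|v\|_{L^2(C_2)}$, where $\rho<2$ shrinks at each step but can be kept bounded below by, say, $3/2$ after relabelling the cylinders; this is just iteration of Lemma~\ref{lem203}. Then, applying Lemma~\ref{lem204} to $w=D^{\prime\alpha}v$ for all $\alpha$ with $|\alpha|\le m$, I get $D\,D^{\prime\alpha}v\in L^{2^*}(C_1)$, hence all mixed derivatives of the form $D_j D^{\prime\alpha}v$ with $|\alpha|\le m$ lie in $L^{2^*}$. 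Iterating the Gagliardo--Nirenberg--Sobolev scheme: starting from $v$ having enough tangential Sobolev regularity, one bootstraps to conclude $v\in W^{m,2^*}$-type control in the tangential variables, and then feeds this back through \eqref{207}--\eqref{208} exactly as in Lemma~\ref{lem204} to get the same integrability for $D_n v$. Since $2^*>2$, after finitely many steps the Sobolev exponent exceeds $n$, and Morrey's embedding gives $Dv\in L^\infty(C_1)$ with the desired bound \eqref{220}.

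The one technical point requiring care is handling the normal derivative $D_n v$ at each stage of the iteration, not just the tangential derivatives: one does not get for free that $D_n v$ is again a solution of \eqref{200}, so the bootstrap must be organized so that at each level one first upgrades the tangential regularity (using Lemma~\ref{lem203} freely), and only at the end uses the equation in the divergence-expanded form \eqref{207}, together with $\sigma_n(v)$ defined in \eqref{208}, to transfer that regularity to the $x_n$-direction. Concretely, if all tangential derivatives of order $\le m+1$ lie in $L^q$ and $D_{jj}v$ ($j\ne n$) lies in $L^q$, then $D_i\sigma_n(v)\in L^q$ for $i\ne n$ by \eqref{001} and $D_n\sigma_n(v)\in L^q$ by \eqref{207}, so $\sigma_n(v)\in W^{1,q}$ tangentially-plus-normally, and Sobolev embedding plus solving \eqref{208} for $D_n v$ gives $D_n v$ in the improved space. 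Because $\overline a^{nn}\ge\nu>0$ by \eqref{002}, this inversion is harmless.

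The main obstacle, then, is purely bookkeeping: carefully tracking the shrinking cylinders $C_{\rho_k}$ across the finitely many iterations and verifying that each constant $c$ produced depends only on $\nu,L,n$ (and, in the borderline $n=2$ case, that the arbitrariness of $2^*$ causes no trouble, since one may simply pick $2^*$ large enough that a single step already exceeds the Sobolev threshold). I expect the cleanest writeup to fix at the outset an integer $m$ with $2\bigl(1+\tfrac{2}{n-2}\bigr)^{m}>n$ (or the obvious modification for $n=2$), to prove by a finite induction that $D^{\prime\alpha}v\in H^1(C_{3/2})$ for $|\alpha|\le m$ with the bound controlled by $\|v\|_{L^2(C_2)}$, to invoke Lemma~\ref{lem204} to push one index from tangential to full gradient, and finally to apply the Sobolev embedding $W^{1,q}\hookrightarrow L^\infty$ for $q>n$ to conclude. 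No genuinely new idea beyond Lemmas~\ref{lem203}--\ref{lem204} is needed; the content is that those two lemmas are self-improving.
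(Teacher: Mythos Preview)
Your proposal is correct and follows essentially the same bootstrap as the paper: iterate Lemma~\ref{lem204} (using Lemma~\ref{lem203} to pass to tangential derivatives at each stage, and the $\sigma_n$ device from \eqref{207}--\eqref{208} to recover $D_n v$) until the Sobolev exponent exceeds $n$, then apply Morrey's embedding. The only point the paper makes explicit that you glossed over is the even-$n$ case, where the iterated exponent $\tfrac{2n}{n-2k}$ can land exactly on $n$; there one perturbs the exponent by a small $\varepsilon$ just before the critical step (your growth formula $2(1+\tfrac{2}{n-2})^m$ is also slightly off, since the actual iterated conjugate is $\tfrac{2n}{n-2k}$, not geometric).
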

\begin{proof}
Let $v$ be a weak solution of \eqref{200}. Then it follows from Lemma~\ref{lem203} that $D^\prime v$ are also weak solutions of \eqref{200}.
According to Lemma~\ref{lem204}, $D D^\prime v \in L^{ 2^{*} }(C_1)$ with the estimate
$$
\Vert  D D^\prime v \Vert_{L^{2^{*}}(C_1)} \leq c  \Vert v \Vert_{L^2(C_2)}.
$$
The same arguments as in the proof of Lemma~\ref{lem204}, applied to
 $2^*$ instead of $2,$ give
$D v  \in W^{1,   2^{*} }(C_1)$ and
$$
\Vert  D  v \Vert_{W^{1,  2^{*} }(C_1)} \leq c  \Vert v \Vert_{L^2(C_2)}.
$$

To proceed further, we consider first the case of an \textit{odd} dimension $n.$
Repeating $k$ times the above procedure gives $2^{ {\overbrace{* \cdots *}}^{k \ \textrm{times}}}= \frac{2n}{n-2k}.$ Taking $k$ so large that $ k > \frac{n-2}{2}$
ensures $ 2^{ {\overbrace{* \cdots *}}^{k \ \textrm{times}}}>n$ and
we apply the Morrey inequality to find that
$$
D v \in W^{1, 2^{  {\overbrace{* \cdots *}}^{ k \ \textrm{times}}}}\left(C_1\right)   \subset  C^{0, \gamma_1}(C_1)  \subset L^\infty(Q_1)
$$
for some $0< \gamma_1 < 1.$ This implies
the estimate \eqref{220} when $n$ is odd and we are done.

Alternatively, if $n$ is \textit{even}, it may happen that
$$
2^{ {\overbrace{* \cdots *}}^{k \ \textrm{times}} } = \frac{2n}{n-2k}=n
$$
after $k$ iterations. However, using the quantity $\frac{2n}{n-2(k-1)}-\varepsilon$ instead of  $\frac{2n}{n-2(k-1)}$ for sufficiently small $\varepsilon >0,$ we will have
$$
\left(\frac{2n}{n-2(k-1)}-\varepsilon\right)^*  < n, \ \left(\frac{2n}{n-2(k-1)}-\varepsilon\right)^{**} >n,
$$
and the proof completes as above.
\end{proof}

Our next step consists in proving $W^{1, \infty}$-regularity up to the boundary
for the weak solutions to the Dirichlet problem for the equation \eqref{200}.
To do this, we define
\begin{equation}\label{223}
C^{+}_2=C_2 \cap \{x_n >0\}\quad \textrm{and}\quad T_2= B^\prime_2 = C_2 \cap \{x_n=0\}
\end{equation}
and suppose that $v \in H^{1}(C^+_2)$ is a weak solution of
\begin{equation}\label{224}
\begin{cases}
D_{i}(\overline{a}^{ij}(x_n)  D_{j} v)  =  &\!\!\! 0
\quad  \textrm{in} \  C^+_2, \\
\hfill v    =  &\!\!\!  0   \quad  \textrm{on}\  T_2.
\end{cases}
\end{equation}
This means that for all $\varphi \in  H^1_{0}(C^+_2),$
$$
\int _{ C^+_2 }  \overline{a}^{ij} D_{j} v \varphi^i \;  dx  =0
$$
and the zero extension of $v$ belongs ro $H^1(C_2).$

We are ready now  to give a natural extension of the interior
$W^{1,\infty}$-regularity in Lemma~\ref{lem220} up to the flat boundary using a proper reflection argument.
\begin{lem}
\label{lem226}
If $v$ is a weak solution of \eqref{224}, then $v \in W^{1,\infty}(C^+_1)$ and it satisfies the estimate
$$
\Vert  D v \Vert_{L^{\infty}(C^+_1)} \leq c  \Vert Dv \Vert_{L^2(C^+_2)}
$$
with a constant independent of $v.$
\end{lem}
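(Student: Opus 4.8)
The plan is to reduce Lemma~\ref{lem226} to the interior estimate of Lemma~\ref{lem220} by an even reflection across the flat piece $T_2$. First I would introduce the reflected coefficients and function: for $x_n<0$ set $\widetilde{v}(x',x_n)=-v(x',-x_n)$ (odd reflection, since $v=0$ on $T_2$), and reflect the coefficients by $\widetilde{a}^{ij}(x_n)=a^{ij}(-x_n)$ for those indices with an even number of $n$'s among $\{i,j\}$, i.e. $\widetilde{a}^{nn}(x_n)=\overline{a}^{nn}(-x_n)$ and $\widetilde{a}^{k\ell}(x_n)=\overline{a}^{k\ell}(-x_n)$ for $k,\ell\neq n$, while $\widetilde{a}^{nk}(x_n)=-\overline{a}^{nk}(-x_n)$ and $\widetilde{a}^{kn}(x_n)=-\overline{a}^{kn}(-x_n)$ for $k\neq n$. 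For $x_n>0$ keep the original data. One checks that $\widetilde{A}$ still satisfies the uniform ellipticity and boundedness bounds \eqref{001}, \eqref{002} (the sign flips cancel in the quadratic form), and that $\widetilde{a}^{ij}$ depends only on $x_n$ — so the reflected equation is again of the form \eqref{200} on the full cylinder $C_2$.

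The key step is verifying that $\widetilde{v}\in H^1(C_2)$ is a weak solution of $D_i(\widetilde{a}^{ij}(x_n)D_j\widetilde{v})=0$ in $C_2$. Membership in $H^1(C_2)$ with no singular part on $T_2$ follows from the boundary condition $v|_{T_2}=0$ together with the hypothesis that the zero extension of $v$ lies in $H^1(C_2)$ — the odd reflection glues across $T_2$ without creating a distributional jump. For the equation itself I would test with $\varphi\in C_c^\infty(C_2)$, split the integral over $C_2^+$ and $C_2^-$, perform the change of variables $x_n\mapsto -x_n$ on the lower half, and use the chain rule: the reflection sends $D_k\widetilde{v}(x',x_n)=-D_k v(x',-x_n)$ for $k\neq n$ but $D_n\widetilde{v}(x',x_n)=+D_n v(x',-x_n)$. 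The sign bookkeeping is arranged precisely so that, after substitution, the lower-half integral becomes $\int_{C_2^+}\overline{a}^{ij}D_jv\,D_i\psi\,dx$ with $\psi$ the odd reflection of $\varphi$, and a standard density/approximation argument (using that $\varphi$ need not vanish on $T_2$, one splits $\varphi$ into its even and odd parts and handles them separately, the odd part being an admissible test function for \eqref{224}) closes the identity. This gives $\widetilde{v}$ as a genuine weak solution of the reflected interior equation.

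Once that is in place, Lemma~\ref{lem220} applied to $\widetilde{v}$ on $C_1\subset C_2$ yields $\widetilde{v}\in W^{1,\infty}(C_1)$ with $\|D\widetilde{v}\|_{L^\infty(C_1)}\le c\|\widetilde{v}\|_{L^2(C_2)}$. Restricting to $C_1^+$ and undoing the reflection gives $v\in W^{1,\infty}(C_1^+)$ and $\|Dv\|_{L^\infty(C_1^+)}\le c\|v\|_{L^2(C_2^+)}$, since $\|\widetilde{v}\|_{L^2(C_2)}^2=2\|v\|_{L^2(C_2^+)}^2$. To replace $\|v\|_{L^2(C_2^+)}$ by $\|Dv\|_{L^2(C_2^+)}$ on the right-hand side, as stated, I would apply the Poincar\'e inequality on $C_2^+$, which is valid because $v$ vanishes on the portion $T_2$ of the boundary (more precisely, interpolate: work on an intermediate cylinder $C_{3/2}^+$, bound $\|Dv\|_{L^\infty(C_1^+)}\le c\|v\|_{L^2(C_{3/2}^+)}$ from the reflection argument, then use the Caccioppoli/Poincar\'e estimate $\|v\|_{L^2(C_{3/2}^+)}\le c\|Dv\|_{L^2(C_2^+)}$).

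I expect the main obstacle to be the careful justification that the reflected function solves the reflected equation \emph{weakly across} $T_2$ — that is, checking no spurious boundary term on $\{x_n=0\}$ appears when integrating by parts. This is where the precise form of the sign flips in $\widetilde{a}^{ij}$ matters and where the hypothesis ``the zero extension of $v$ belongs to $H^1(C_2)$'' (equivalently, $v$ attains its zero boundary data on $T_2$ in the trace sense) does the real work; the rest is routine bookkeeping with the chain rule and a change of variables.
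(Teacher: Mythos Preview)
Your approach is essentially the same as the paper's: odd reflection of $v$ across $T_2$ together with the appropriate even/odd reflections of the coefficients $\overline{a}^{ij}(x_n)$, followed by an application of the interior estimate Lemma~\ref{lem220} and restriction back to $C_1^+$. You have in fact fleshed out details the paper leaves implicit --- the precise sign pattern in the reflected coefficients, the check that no boundary term survives on $T_2$, and the Poincar\'e step needed to pass from $\Vert v\Vert_{L^2(C_2^+)}$ to $\Vert Dv\Vert_{L^2(C_2^+)}$ on the right-hand side.
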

\begin{proof}
Define $\overline{v}$ in $C_2$ by
$$
\overline{v}(x^\prime,x_n)=
\begin{cases}
\hfill v(x^\prime,x_n) & \textrm{ if }\ x_n \geq 0, \\
-v(x^\prime,-x_n) & \textrm{ if }\ x_n < 0
\end{cases}
$$
and extend $\overline{a}^{ij}(x_n)$ from $\{x_n >0\}$ to $\{x_n \leq 0\}$ by  even  or odd reflection, depending on the indices $i$ and $j,$ in a way that the extended $\overline{v}$ is a weak solution  of \eqref{200}.
We then apply Lemma~\ref{lem220} to find that $\overline{v}$ supports the interior $W^{1,\infty}$-regularity in $C_2.$ The claim follows from the restriction of $\overline{v}$ from $C_2$ to $C^+_2.$
\end{proof}

\section{$L^2$-estimates from an argument by perturbation}\label{sec4}
\setcounter{equation}{0}
\setcounter{thm}{0}

In this section we study gradient estimates of the weak solution of \eqref{000} by comparison with solutions to the limiting problems \eqref{200} and \eqref{224}.
The idea is to use higher integrability results, see Lemma~\ref{lem307} and Lemma~\ref{lem322}.
These regularity results follow from the fine works in the recent papers \cite{BP1,Pa1,Pa2,Pa3} by  B\"{o}gelein and Parviainen where the authors investigated self improved regularity near the boundary with a very mild condition, so-called, the capacity density condition. Needless to say, our Reifenberg flat domain satisfies this capacity density condition.

We will employ here the reverse H\"{o}lder inequality, as used in \cite{AM1,AM2,BRW1,BW2} which gives a better regularity of solutions.  This reverse H\"{o}lder inequality can compensate the lack of compactness of weak solutions which was previously used in \cite{BW1}. Thus the present approach can be applied to a more general setting when the coefficients belongs to $L^1$ with respect to one of the spatial variables where any compactness fails.

We start with the interior case. To do this, let us suppose that $F \in L^2(\Omega, \mathbb{R}^{n})$ and $u \in H^{1}(\Omega)$ satisfies
$$
\int _{\Omega }  a^{ij} D_{j} u^i \varphi \;  dx  =
\int _{\Omega } f^{i} D_{i}  \varphi  \;  dx
$$
for all $\varphi \in H^1_0(\Omega),$ that is, $u$ is a weak solution of
\begin{equation}\label{301}
D_i \left( a^{ij} D_{j} u\right)  =    D_i f^{i} \quad    \textrm{in}\   \Omega.
\end{equation}

By a proper translation, scaling and normalization (see Lemma~\ref{lem408} below) we further assume that
\begin{equation}\label{302}
C_5=\{(x^\prime,x_n)\colon\ |x^\prime|<5, |x_n|< 5\} \subset \Omega,
\end{equation}
\begin{equation}\label{303}
\Xint-_{ C_{ 4 } } |Du| ^{2} \;  dx \leq 1
\end{equation}
and
\begin{equation}\label{304}
\Xint-_{ C_{4}  } \Big(| A(x^\prime,x_n)-
        \overline{  A } _{  B^\prime _4 } (x_n)|^{2} +|F|^2 \Big) \;  dx   \leq  \delta^{2}
\end{equation}
for some small $\delta>0.$

Then we consider a local homogeneous boundary problem corresponding to \eqref{301},
\begin{equation}\label{305}
\begin{cases}
D_{i } ( a^{ij}   D_{j} \tilde{u} )  = &\!\!\! 0  \quad \textrm{in}\  C_3,\\
\hfill \tilde{u}  = &\!\!\! u \quad \textrm{on} \   \partial C_3,
\end{cases}
\end{equation}
and the limiting problem
\begin{equation}\label{306}
\begin{cases}
D_{i } \left( \overline {   a^{ij}}_{  B^{ \prime }_{ 4 } } (x_n) D_{j} v \right)  = &\!\!\! 0  \quad \textrm{in} \  C_2,\\
\hfill v  = &\!\!\! \tilde{u} \quad \textrm{on} \ \partial C_2.
\end{cases}
\end{equation}

We will use the following higher regularity result for the weak solution to \eqref{305}.
\begin{lem}\label{lem307}
If $\tilde{u}$ is the weak solution of \eqref{305} and \eqref{302}--\eqref{304} hold true, then there exists a constant $\sigma=\sigma(n,\nu,L)>0$ such that
$$\Vert D \tilde{u} \Vert_{ L^{2+\sigma}(C_2) } \leq c.$$
\end{lem}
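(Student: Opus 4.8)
The plan is to derive the higher integrability of $D\tilde u$ from the classical Gehring--Giaquinta--Modica reverse Hölder machinery, using the Caccioppoli inequality for the homogeneous equation \eqref{305} together with the hypotheses \eqref{302}--\eqref{304}. First I would fix a cylinder $C_\rho(z)\subset C_3$ and a concentric cylinder $C_{2\rho}(z)\subset C_3$ and test the weak formulation of \eqref{305} with $\varphi=\eta^2(\tilde u-\overline{\tilde u})$, where $\eta$ is a standard cutoff equal to $1$ on $C_\rho(z)$, supported in $C_{2\rho}(z)$, with $|D\eta|\le c/\rho$, and $\overline{\tilde u}$ is the average of $\tilde u$ over $C_{2\rho}(z)$. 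Using the uniform ellipticity \eqref{002} and boundedness \eqref{001}, together with Young's inequality to absorb the cross term, this yields the interior Caccioppoli estimate
$$
\Xint-_{C_\rho(z)} |D\tilde u|^2\,dx \le \frac{c}{\rho^2}\Xint-_{C_{2\rho}(z)} |\tilde u-\overline{\tilde u}|^2\,dx .
$$
By the Sobolev--Poincaré inequality the right-hand side is bounded by $c\big(\Xint-_{C_{2\rho}(z)}|D\tilde u|^{2_*}\,dx\big)^{2/2_*}$ with $2_*=\frac{2n}{n+2}<2$, so one obtains a reverse Hölder inequality with a lower exponent on the right, valid for all such pairs of cylinders inside $C_3$.

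Next I would invoke Gehring's lemma (in the form of Giaquinta--Modica, valid for the local reverse Hölder inequality just established): there is an exponent $\sigma=\sigma(n,\nu,L)>0$ such that $D\tilde u\in L^{2+\sigma}_{\mathrm{loc}}(C_3)$, and for $C_2\Subset C_3$ one has the quantitative bound
$$
\Big(\Xint-_{C_2} |D\tilde u|^{2+\sigma}\,dx\Big)^{\frac{1}{2+\sigma}} \le c\Big(\Xint-_{C_{3}} |D\tilde u|^{2}\,dx\Big)^{\frac12}.
$$
It remains to control $\Xint-_{C_3}|D\tilde u|^2\,dx$ by an absolute constant. For this I would use the energy estimate for \eqref{305}: since $\tilde u-u\in H^1_0(C_3)$, testing the equation for $\tilde u$ and comparing with the equation \eqref{301} for $u$ (which, on $C_3\subset C_5\subset\Omega$, has free term $F$ satisfying \eqref{304}), one gets $\Xint-_{C_3}|D\tilde u|^2\,dx \le c\big(\Xint-_{C_4}|Du|^2\,dx + \Xint-_{C_4}|F|^2\,dx\big)$; by \eqref{303} and \eqref{304} this is $\le c(1+\delta^2)\le c$. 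Combining the last two displays gives $\|D\tilde u\|_{L^{2+\sigma}(C_2)}\le c$, as claimed (after renaming the constant, absorbing $|C_2|$). Alternatively, as the authors indicate, one may quote the boundary self-improving regularity results of Bögelein--Parviainen \cite{BP1,Pa1,Pa2,Pa3} directly; but for the purely interior statement of Lemma~\ref{lem307} the interior Gehring argument above suffices.

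The main obstacle I anticipate is not the reverse Hölder step itself, which is standard, but bookkeeping the dependence of $\sigma$ and $c$ only on $n,\nu,L$ — in particular making sure the exponent $\sigma$ produced by Gehring's lemma is \emph{uniform} and does \emph{not} degenerate with $\delta$, which is true here precisely because the Caccioppoli and Sobolev--Poincaré constants depend only on $n,\nu,L$ and the cylinder geometry, while $\delta$ enters only through the a priori smallness of the data and is used solely to bound $\Xint-_{C_3}|D\tilde u|^2\,dx$ by a constant. One should also be mildly careful that the cylinders $C_\rho(z)$ used in the covering argument behave like balls for the Poincaré inequality (they do, being products of a ball and an interval), so no anisotropy issue arises.
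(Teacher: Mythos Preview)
Your proposal is correct and follows essentially the same route as the paper: the authors simply quote the ``well-known higher integrability results'' (i.e., the Gehring--Giaquinta--Modica reverse H\"older machinery you spell out) to obtain $\Xint-_{C_2}|D\tilde u|^{2+\sigma}\,dx\le c\big(\Xint-_{C_3}|D\tilde u|^2\,dx\big)^{(2+\sigma)/2}$, and then bound the right-hand side via the standard $L^2$-estimate (their ``maximality property'') for the homogeneous Dirichlet problem \eqref{305}, which gives $\Xint-_{C_3}|D\tilde u|^2\,dx\le c\,\Xint-_{C_3}|Du|^2\,dx\le c$ directly from \eqref{303}. The only minor difference is that your energy bound unnecessarily brings in the $|F|^2$ term---testing \eqref{305} with $\tilde u-u\in H^1_0(C_3)$ already yields $\int_{C_3}|D\tilde u|^2\le c\int_{C_3}|Du|^2$ without reference to $F$.
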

\begin{proof}
It follows from well-known higher integrability results for \eqref{305} that there exists a constant $\sigma=\sigma(n,\nu,L)$
such that
\begin{equation}\label{308}
\Xint-_{ C_{ 2  } } |D \tilde{u}|^{2+\sigma} \;  dx \leq   c  \left(\Xint-_{ C_{ 3  } } |D \tilde{u}|^{2} \;  dx \right)^{ \frac{ 2+\sigma}{2}}.
\end{equation}
Then the maximality property\footnote{Hereafter, by ``maximality property'' we mean the standard $L^2$-estimate for the weak solutions to Dirichlet problem for second-order, divergence form linear elliptic equations.} and \eqref{303} imply that
\begin{equation}\label{309}
\Xint-_{ C_{ 3  } } |D \tilde{u}|^{2} \;  dx \leq c \Xint-_{ C_{ 3  } } |D u|^{2} \;  dx \leq c
\end{equation}
and the claim follows from \eqref{308} and \eqref{309}.
\end{proof}

\begin{lem}\label{lem310}
If $v$ is the weak solution of \eqref{306} and  \eqref{302}--\eqref{304} hold true, then $v$ is locally Lipschitz continuous with the uniform bound
$$
\Vert D v \Vert
^{2}_{L^{\infty}(C_{1})} \leq  c.
$$
\end{lem}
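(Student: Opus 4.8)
The plan is to reduce the statement to the interior Lipschitz estimate of Lemma~\ref{lem220}, applied to the limiting equation \eqref{306}, and then to absorb the resulting right-hand side using the higher integrability of $D\tilde u$ furnished by Lemma~\ref{lem307}. First I would observe that the averaged coefficients $\overline{a^{ij}}_{B^\prime_4}(x_n)$ depend on the single variable $x_n$ and still satisfy \eqref{001}--\eqref{002}: uniform boundedness is immediate, and ellipticity follows by averaging \eqref{002} in $x^\prime$ over $B^\prime_4$. Hence \eqref{306} is an equation of the type \eqref{200}, so $v$ is in particular a weak solution of \eqref{200} in $C_2$, and since constants lie in the kernel of the operator, so is $v-(v)_{C_2}$, where $(v)_{C_2}$ denotes the integral average of $v$ over $C_2$. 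Applying Lemma~\ref{lem220} to $v-(v)_{C_2}$ and then the Poincar\'e--Wirtinger inequality on the convex cylinder $C_2$ yields
$$
\Vert Dv\Vert_{L^\infty(C_1)} = \Vert D(v-(v)_{C_2})\Vert_{L^\infty(C_1)} \leq c\,\Vert v-(v)_{C_2}\Vert_{L^2(C_2)} \leq c\,\Vert Dv\Vert_{L^2(C_2)}.
$$

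It then remains to bound $\Vert Dv\Vert_{L^2(C_2)}$. I would do this by comparison of energies: since $v$ solves the Dirichlet problem \eqref{306} with boundary datum $\tilde u$ on $\partial C_2$, it minimizes $\int_{C_2}\overline{a^{ij}}_{B^\prime_4}(x_n)\,D_jw\,D_iw\,dx$ among all $w\in H^1(C_2)$ with $w-\tilde u\in H^1_0(C_2)$; comparing with the competitor $\tilde u$ and using \eqref{001}--\eqref{002} gives $\nu\Vert Dv\Vert_{L^2(C_2)}^2\leq L\Vert D\tilde u\Vert_{L^2(C_2)}^2$ (equivalently, write $v=\tilde u+w$ with $w\in H^1_0(C_2)$ and invoke the maximality property for $w$). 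Finally, Lemma~\ref{lem307} together with H\"older's inequality on the bounded set $C_2$ gives $\Vert D\tilde u\Vert_{L^2(C_2)}\leq c\,\Vert D\tilde u\Vert_{L^{2+\sigma}(C_2)}\leq c$. Concatenating the three estimates delivers $\Vert Dv\Vert^2_{L^\infty(C_1)}\leq c$, which is the assertion.

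Since the argument is essentially a chaining of results already established, there is no serious obstacle; the only points that need a little care are (i) checking that $x^\prime$-averaging preserves the structure conditions, so that Lemma~\ref{lem220} genuinely applies to \eqref{306}; (ii) passing from the $L^2$-norm of $v$ that appears in Lemma~\ref{lem220} to the $L^2$-norm of $Dv$, which is precisely why one normalizes $v$ by its mean over $C_2$ before applying that lemma; and (iii) bookkeeping the nested cylinders $C_1\subset C_2\subset C_3$ so that the comparison with $\tilde u$ (defined on $C_3$, with $\tilde u=v$ on $\partial C_2$) and the higher integrability estimate on $C_2$ fit together.
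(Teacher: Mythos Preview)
Your argument is correct and follows essentially the same route as the paper: apply Lemma~\ref{lem220} to the limiting equation, then use energy comparison (the maximality property) to pass from $Dv$ to $D\tilde u$ and on to a bound by $Du$. The only difference is that the paper bounds $\Vert D\tilde u\Vert_{L^2}$ directly via the maximality property for \eqref{305} together with \eqref{303}, rather than detouring through the higher-integrability Lemma~\ref{lem307}; on the other hand, your mean-subtraction step before invoking Lemma~\ref{lem220} makes explicit a passage the paper leaves implicit.
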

\begin{proof}
By Lemma~\ref{lem220}, the maximality property for  \eqref{305} and \eqref{306}, and \eqref{303} we have
$$
\Vert D v \Vert
^{2}_{L^{\infty}(C_{1})} \leq  c \Xint-_{ C_{ 2  } } |D v|^{2} \;  dx  \leq c \Xint-_{ C_{ 3  } } |D \tilde{u}|^{2} \;  dx \leq c \Xint-_{ C_{ 4 } } |D u|^{2} \;  dx \leq c.
$$
\end{proof}

\begin{lem}
\label{lem311}
Given a weak solution $u$ of \eqref{301} satisfying \eqref{302} and \eqref{303}, let $v$ be the weak solution of \eqref{306}.
Then for $0 < \varepsilon < 1 $ fixed, there exists a small $\delta=\delta(\varepsilon)>0$
such that if \eqref{304}
holds for such $\delta,$ then we have
$$
\Xint-_{C_1} |D (u-v)|^2 \;  dx \leq \varepsilon^2.
$$
\end{lem}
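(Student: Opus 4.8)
The plan is to split the comparison $u-v$ across the two auxiliary problems, estimating $u-\tilde u$ in $C_3$ (the effect of freezing the right-hand side and imposing homogeneity on a slightly smaller cylinder) and $\tilde u - v$ in $C_2$ (the effect of averaging the coefficients in the $x'$-variables), and then absorbing both into the cylinder $C_1$. First I would fix $\varphi = u - \tilde u \in H^1_0(C_3)$, which is legitimate since $u = \tilde u$ on $\partial C_3$, and test the two weak formulations (for $u$ in \eqref{301} and for $\tilde u$ in \eqref{305}) against it. Subtracting gives
\begin{equation*}
\int_{C_3} a^{ij} D_j(u-\tilde u)\, D_i(u-\tilde u)\; dx = \int_{C_3} f^i D_i(u-\tilde u)\; dx,
\end{equation*}
and ellipticity \eqref{002} together with Cauchy--Schwarz and Young's inequality yields
\begin{equation*}
\Xint-_{C_3} |D(u-\tilde u)|^2\; dx \leq c \Xint-_{C_3} |F|^2\; dx \leq c\,\delta^2,
\end{equation*}
where the last inequality is \eqref{304} (the integral over $C_4$ dominates that over $C_3$ up to a fixed geometric constant). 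Combined with \eqref{303} and the maximality property this also gives a uniform bound $\Xint-_{C_3}|D\tilde u|^2\,dx \leq c$, already recorded in the proof of Lemma~\ref{lem307}.

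Next I would estimate $w := \tilde u - v$ on $C_2$. Here $w \in H^1_0(C_2)$ because $v = \tilde u$ on $\partial C_2$ by \eqref{306}. Testing the equation for $\tilde u$ and the limiting equation \eqref{306} against $w$ and subtracting produces
\begin{equation*}
\int_{C_2} \overline{a^{ij}}_{B'_4}(x_n)\, D_j w\, D_i w\; dx = \int_{C_2} \bigl(\overline{a^{ij}}_{B'_4}(x_n) - a^{ij}\bigr) D_j \tilde u\, D_i w\; dx.
\end{equation*}
By ellipticity of the averaged matrix (which inherits \eqref{002}) the left side controls $\nu\int_{C_2}|Dw|^2\,dx$ from below; for the right side I would apply H\"older's inequality with exponents $\tfrac{2+\sigma}{2}$ and its conjugate, splitting off $|D\tilde u|$ in $L^{2+\sigma}$ (bounded by $c$ via Lemma~\ref{lem307}), the coefficient oscillation $|\overline A_{B'_4} - A|$ in the complementary Lebesgue norm—controlled by a power of $\delta$ using \eqref{304} and the uniform bound $\|A\|_\infty \leq L$ from \eqref{001} to convert the $L^2$-smallness into $L^{q}$-smallness for the relevant $q < 2$—and $|Dw|$ in $L^2$. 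Absorbing the $|Dw|$ factor on the left yields $\Xint-_{C_2}|Dw|^2\,dx \leq c\,\delta^{2\theta}$ for some exponent $\theta \in (0,1)$ depending on $\sigma$ and $n$. The triangle inequality $\Xint-_{C_1}|D(u-v)|^2 \leq 2\Xint-_{C_1}|D(u-\tilde u)|^2 + 2\Xint-_{C_1}|D(\tilde u - v)|^2 \leq c\,\delta^{2\theta}$ then finishes the argument upon choosing $\delta = \delta(\varepsilon)$ so small that $c\,\delta^{2\theta} \leq \varepsilon^2$.

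The main obstacle I anticipate is the coefficient-oscillation term in the second step: the natural bound on $|\overline A_{B'_4}(x_n) - A(x',x_n)|$ from \eqref{304} is only in $L^2(C_4)$, whereas pairing it against $D\tilde u \in L^{2+\sigma}$ forces one to measure the oscillation in $L^{q}$ with $q = \tfrac{2(2+\sigma)}{\sigma} > 2$. This is exactly where the higher integrability of Lemma~\ref{lem307} is indispensable: using the crude bound $|\overline A_{B'_4} - A| \leq 2L$ together with the $L^2$-smallness $\delta^2$, one interpolates to get $\|\overline A_{B'_4} - A\|_{L^q(C_4)} \leq c\,\delta^{2/q}$, which is the source of the fractional power $\delta^{2\theta}$. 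The bookkeeping of which cylinders ($C_4 \supset C_3 \supset C_2 \supset C_1$) carry which estimate, and keeping the averaged integrals honest across the nested domains, is routine but must be done carefully; none of it is conceptually hard once the two testing identities above are in place.
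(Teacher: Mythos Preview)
Your proposal is correct and follows the paper's approach exactly: split through $\tilde u$, bound $D(u-\tilde u)$ on $C_3$ by the energy estimate against $F$, and bound $D(\tilde u-v)$ on $C_2$ by H\"older's inequality pairing $D\tilde u\in L^{2+\sigma}$ (Lemma~\ref{lem307}) against the coefficient oscillation in $L^{2(2+\sigma)/\sigma}$, the latter controlled by interpolating the $L^2$-smallness \eqref{304} with the $L^\infty$ bound \eqref{001}. One small slip to fix: in your middle paragraph you write ``$q<2$'' for the oscillation exponent, but as your own final paragraph correctly states, the relevant exponent is $q=2(2+\sigma)/\sigma>2$, and the interpolation goes from $L^2$ \emph{up} to $L^q$.
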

\begin{proof}
Let $\tilde{u}$ be the weak solution of \eqref{305}. Then it follows from a direct computation that $u-\tilde{u} \in H^1_0(C_3)$ is the weak solution of
\begin{equation}\label{313}
D_{i}( a^{ij} D_{j} (u-\tilde{u}))  =  D_i  f^i\quad  \textrm{in}\  C_3.
\end{equation}
We also see that
$\tilde{u}-v \in H^1_0(C_2)$ is the weak solution of
$$
D_{i}\left( \overline{a^{ij}}_{B^\prime_4} (x_n)   D_{j} (\tilde{u}-v)\right)  = - D_i\left( \big[a^{ij}(x^\prime,x_n)- \overline{a^{ij}}_{B^\prime_4}(x_n)\big]
D_j \tilde{u}\right)\quad   \textrm{in}\  C_2.
$$
It follows from the standard
$L^2$-estimate for \eqref{313} and the smallness condition \eqref{304} that
\begin{equation}\label{315}
\Xint-_{C_3} |D(u-\tilde{u})|^2 \;  dx \leq  c \Xint-_{C_3} |F|^2 \;  dx  \leq  c \delta^2.
\end{equation}
Furthermore, in view of H\"{o}lder's  \ inequality, Lemma~\ref{lem307}, \eqref{001} and \eqref{304} we have
\begin{align*}
 \Xint-_{C_2} |D(\tilde{u}-v)|^2  \; dx    = &  \      \Xint-_{Q_3}  \left| A -  \overline{  A } _{   B^\prime _4 } (x_n)\right|^2 |D\tilde{u}|^2 \; dx \\
 \leq\ &            c      \left(\Xint-_{Q_3} \left| A -  \overline{  A } _{  B^\prime _4 } (x_n)\right|^{    \frac{4+2 \sigma }{ \sigma }  }  \; dx\right)^{ \frac {\sigma}{2+ \sigma} }
\left(\Xint-_{C_3}  |D\tilde{u}| ^{ 2+\sigma}  \; dx\right)^{  \frac{ 2 }{ 2+ \sigma }} \\
 \leq\ &       c  \left(\Xint-_{C_4} \left| A -  \overline{  A} _{  B^\prime _4 }  (x_n) \right|^{ \frac{ 4 }{ \sigma}+2 } \; dx\right)^{ \frac{\sigma} {2+ \sigma}}\\
 \leq\ &  c \  \delta^{ \sigma_1}
\end{align*}
for some $\sigma_1=\sigma_1(n,L,\nu)>0.$
In the light of \eqref{315}, we thus deduce
$$
\Xint-_{C_1} |D(u-v)|^2 \;  dx \leq c(\delta^2 + \delta^{\sigma_1} ) =  \varepsilon^2,
$$
by taking $\delta>0$ so small, in order to get the last equality.
This completes the proof.
\end{proof}

We next extend the interior estimates from Lemma~\ref{lem311} to obtain boundary gradient estimates in Reifenberg domains. Recalling the notations from Section~\ref{sec2} and \eqref{223}, we add here some more geometric notations:

\begin{enumerate}
\item $\Omega_r = \Omega \cap C_r,$ $C^+_r = C_r \cap \{x=(x^\prime,x_n)\colon\ x_n >0\},$
\item $\partial_w \Omega_r = \partial \Omega \cap C_r,$ $T_r = C_r \cap \{x=(x^\prime,x_n)\colon\ x_n =0\}.$
\end{enumerate}

Motivated from our geometric assumption in Definition~\ref{def100}, we further assume that there exists a small $\delta>0$ such that
\begin{equation}
\label{316}
C^+_5  \subset  \Omega_5   \subset \Omega \cap \{x=(x^\prime,x_n)\colon\ x_n  > -10 \delta\},
\end{equation}
and
\begin{equation}
\label{317}
\Xint-_{ C_{4}  } \Big(\left| A(x^\prime,x_n)- \overline{  A} _{  B^\prime _4 } (x_n)  \right|^{2}  + |F|^2  \Big)\;  dx    \leq  \delta^{2}
\end{equation}
holds true for such small $\delta.$ Then let us suppose that
$u$ is a weak solution of
\begin{equation}
\label{318}
\begin{cases}
 \displaystyle  D_{i }( a^{ij} D_{j}u )  = &\!\!\! D_{i}  f^i  \quad \textrm{in} \ \Omega_5,\\
 \hfill u = &\!\!\! 0 \quad\quad\ \: \textrm{on} \ \partial_w \Omega_5,
 \end{cases}
\end{equation}
which means that for all $\varphi \in H^1_0(\Omega_5),$
$$\int_{\Omega_5} a^{ij} D_{j}u D_{i} \varphi \;  dx = \int_{\Omega_5} f^i D_{i} \varphi \;  dx$$
and the zero extension of $u$ belongs to $H^1_0(C_5).$ By means of suitable scaling and normalization, we may also assume that
\begin{equation}
\label{319}
\Xint-_{\Omega_5} |D u|^2 \  dx \leq  1.
\end{equation}
Then consider the following homogeneous problem
\begin{equation}
\label{320}
\begin{cases}
\displaystyle
D_{i }( a^{ij} D_{j} w )  = &\!\!\! 0  \quad \textrm{in} \ \Omega_5,\\
\hfill w = &\!\!\! u \quad \textrm{on} \ \partial \Omega_5.
\end{cases}
\end{equation}
We then look at the following limiting problems
\begin{equation}
\label{321}
\begin{cases}
D_{i } \left(\overline{ a_{ij}} _{  B^\prime _4 } (x_n) D_{j} h \right)
 = &\!\!\! 0  \quad \textrm{in} \ \Omega_4,\\
\hfill h  = &\!\!\! w \quad \!\textrm{on} \ \partial_w \Omega_4,
\end{cases}
\end{equation}
and
\begin{equation}
\label{322}
\begin{cases}
D_{i } \left( \overline{ a_{ij}} _{  B^\prime _4 } (x_n) D_{j} v \right)
= &\!\!\! 0  \quad \textrm{in} \ C^+_4,\\
\hfill v  = &\!\!\! 0 \quad \textrm{on} \ T_4.
\end{cases}
\end{equation}

\begin{lem}
\label{lem322}
Let $w$ be the weak solution of \eqref{320}
satisfying \eqref{316}, \eqref{317} and \eqref{319}.
Then
$$
\Vert D w\Vert_{L^{2+\sigma_2}(\Omega_4)} \leq c,
$$
for some small universal constant $\sigma_2>0.$
\end{lem}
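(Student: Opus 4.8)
The plan is to establish a boundary reverse-Hölder (self-improving higher-integrability) estimate for the gradient of $w$, exactly in the spirit of the interior estimate in Lemma~\ref{lem307}, but now near the flat-ish portion of the boundary. The key structural inputs are: (i) the Reifenberg flatness encoded in \eqref{316}, which guarantees that $\partial_w\Omega_5$ satisfies the capacity density condition uniformly; (ii) the zero boundary values of $w$ on $\partial_w\Omega_5$, allowing us to use $w$ itself (suitably truncated and localized) as a test function in the weak formulation of \eqref{320}; and (iii) the normalization \eqref{319}. With these, a Caccioppoli inequality on half-cylinders $C_\rho^+(z)$ centered at boundary points, combined with a Sobolev–Poincaré inequality that exploits the measure-density/capacity-density property, yields a weak reverse-Hölder inequality of the form $\Xint-_{C_{\rho/2}^+(z)}|Dw|^2\,dx \le c\big(\Xint-_{C_\rho^+(z)}|Dw|^{2_*}\,dx\big)^{2/2_*}$ with $2_* = \frac{2n}{n+2}$, uniformly for all such balls and for interior balls as well.

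The next step is to invoke Gehring's lemma (in the form due to Giaquinta–Modica, or the precise boundary version from the Bögelein–Parviainen papers \cite{BP1,Pa1,Pa2,Pa3} cited in the text), which upgrades this weak reverse-Hölder inequality to genuine higher integrability: there exists $\sigma_2 = \sigma_2(n,\nu,L) > 0$ such that $Dw \in L^{2+\sigma_2}_{\mathrm{loc}}$ up to the boundary, with the quantitative bound
\begin{equation*}
\Xint-_{C_4^+}|Dw|^{2+\sigma_2}\,dx \le c\Big(\Xint-_{\Omega_5}|Dw|^2\,dx\Big)^{\frac{2+\sigma_2}{2}}.
\end{equation*}
Here one must be slightly careful: near points of $\partial_w\Omega_5$ one uses the boundary estimate, while near points where $C_4^+ \subset \Omega_5$ one uses the interior estimate of Lemma~\ref{lem307}; a standard covering argument patches these together over $\Omega_4$. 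One then bounds $\Xint-_{\Omega_5}|Dw|^2\,dx$ by $c\,\Xint-_{\Omega_5}|Du|^2\,dx \le c$ using the maximality property (the $L^2$-estimate for \eqref{320}, whose right-hand side is zero so $w$ is compared against its boundary data $u$) together with \eqref{319}. Rescaling the average back to the full norm on $\Omega_4$ gives $\Vert Dw\Vert_{L^{2+\sigma_2}(\Omega_4)} \le c$, as claimed.

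The main obstacle I expect is the boundary Caccioppoli/Sobolev–Poincaré step on the Reifenberg-flat geometry: unlike a flat half-space, $\partial_w\Omega_5$ is only trapped between hyperplanes at scale $10\delta$, so one cannot literally reflect. Instead the zero extension of $w$ to $C_5$ (which lies in $H^1_0(C_5)$ by hypothesis) must be used, and the Sobolev–Poincaré inequality for functions vanishing on a set of positive capacity relative to the ball is what replaces the naive Poincaré inequality — this is precisely where the capacity density condition (satisfied by Reifenberg-flat domains, as noted in the text) enters, and where one must cite the fine boundary regularity results of Bögelein and Parviainen rather than reprove them. Once that uniform weak reverse-Hölder inequality is in hand, the passage through Gehring's lemma and the final bookkeeping with the maximality property and \eqref{319} are routine.
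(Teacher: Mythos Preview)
Your proposal is correct and follows essentially the same approach as the paper: both observe that the Reifenberg flatness condition \eqref{316} yields the measure-density (hence capacity-density) condition for $\mathbb{R}^n\setminus\Omega$, invoke the boundary higher-integrability results of B\"ogelein--Parviainen \cite{BP1,Pa1,Pa2,Pa3} to obtain $\Xint-_{\Omega_4}|Dw|^{2+\sigma_2}\,dx \le c\big(\Xint-_{\Omega_5}|Dw|^2\,dx\big)^{(2+\sigma_2)/2}$, and then conclude via the maximality property for \eqref{320} together with \eqref{319}. The paper's proof is simply the condensed version of your outline, citing the self-improving result directly rather than sketching the Caccioppoli/Sobolev--Poincar\'e/Gehring mechanism behind it.
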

\begin{proof}
It follows from the Reifenberg flatness condition \eqref{102} (cf. Remark~\ref{remnew}~3) that
$\partial\Omega$ satisfies the (A)-condition, that is, the Lebesgue measure of $\Omega_r(y)$ is $\delta$-comparable to $|C_r|:$
$$
|C_{r}(y)| \geq |\Omega_{r}(y)| \geq  \left[\frac{1-\delta}{2\sqrt{2}}\right]^n |C_{r}(y)|, \ \ \ \forall y \in \partial \Omega, \ \ \  \forall r >0.
$$
By using this property, one can check that $\mathbb{R}^n \setminus \Omega$ satisfies the uniform $2$-thickness condition from \cite{BP1,Pa1,Pa2,Pa3} and, as consequence, the uniform capacity density condition.  Then, according to the improving-of-integrability result up to the boundary (cf. \cite{BP1,Pa1,Pa2,Pa3}) we have
$$
\Xint-_{ \Omega_{ 4  } } |Dw|^{2+\sigma_2} \  dx \leq   c  \left(\Xint-_{ \Omega_{ 5  } } |D w|^{2} \;  dx \right)^{ \frac{ 2+\sigma_2}{2}}
\leq c  \left(\Xint-_{ \Omega_{ 5  } } |D u|^{2} \;  dx \right)^{ \frac{ 2+\sigma_2}{2}} \leq c
$$
for some positive constant $\sigma_2=\sigma_2(n,\nu,L),$ after
using the maximality property for \eqref{320} and the assumption \eqref{319}.
\end{proof}

Based on weak compactness argument, we compare the weak solution \eqref{321} with a weak solution of \eqref{322}.
\begin{lem}
\label{lem324}
For any $\varepsilon>0$, there exists a small $\delta =\delta(\varepsilon) >0$
such that if $h$ is the weak solution of \eqref{321}
with
$$ 
\Xint-_{\Omega_4} |Dh|^2 \ dx \leq 1,
$$
and \eqref{316}-\eqref{317} hold for such $\delta$,
then there exists a weak solution $v$ of (\ref{322}) such that
\begin{equation}
\label{325}
\Xint-_{C^{+}_4} |Dv|^2 \ dx \leq 1\quad \text{ and }\quad  \Xint-_{C^{+}_4}|h-v|^2 \ dx \leq \varepsilon^2.
\end{equation}
\end{lem}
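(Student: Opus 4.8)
The plan is to prove Lemma~\ref{lem324} by a standard compactness-contradiction argument, in the spirit of the interior comparison in Lemma~\ref{lem311} but adapted to the flat boundary piece $T_4$. Suppose the conclusion fails: then there exist $\varepsilon_0>0$, a sequence $\delta_k\to 0^+$, coefficient matrices $A_k$ and domains $\Omega^{(k)}_4$ satisfying \eqref{316}--\eqref{317} with $\delta=\delta_k$, and weak solutions $h_k$ of \eqref{321} (for the averaged coefficients $\overline{A_k}_{B'_4}(x_n)$) with $\Xint-_{\Omega^{(k)}_4}|Dh_k|^2\,dx\le 1$, such that for every weak solution $v$ of \eqref{322} one has $\Xint-_{C^+_4}|h_k-v|^2\,dx>\varepsilon_0^2$. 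By the (A)-condition for Reifenberg flat domains (Remark~\ref{remnew}~3 and the measure estimate in the proof of Lemma~\ref{lem322}) the zero extensions $\overline{h_k}$ are bounded in $H^1(C_4)$, so a subsequence converges weakly in $H^1(C_4)$ and strongly in $L^2(C_4)$ to some $h_0$; and the higher integrability from Lemma~\ref{lem322}, $\|Dh_k\|_{L^{2+\sigma_2}(\Omega^{(k)}_4)}\le c$, upgrades this to strong convergence of $Dh_k$ in $L^2$ on fixed compact subsets.

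Next I would identify the limit problem. The geometric squeeze \eqref{316} forces $\Omega^{(k)}_4\to C^+_4$ in the sense that the symmetric difference has measure $O(\delta_k)\to 0$, and since $h_k=0$ on $\partial_w\Omega^{(k)}_4\subset\{x_n>-10\delta_k\}$, the limit $h_0$ vanishes on $T_4$ in the trace sense, i.e. $\overline{h_0}\in H^1(C_4)$ with $h_0=0$ on $T_4$. The averaged coefficient matrices $\overline{A_k}_{B'_4}(x_n)$ are functions of $x_n$ alone, uniformly bounded and elliptic by \eqref{001}--\eqref{002}, so (passing to a further subsequence) they converge weakly-$*$ in $L^\infty$ to some matrix $\overline{A_0}(x_n)$ which is still bounded and elliptic. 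Passing to the limit in the weak formulation $\int_{\Omega^{(k)}_4}\overline{A_k}_{B'_4}(x_n)Dh_k\cdot D\varphi\,dx=0$ — using strong $L^2$ convergence of $Dh_k$ against the weak-$*$ convergence of the coefficients, and the vanishing of the domain mismatch — shows that $h_0$ is a weak solution of
\begin{equation*}
D_i\big(\overline{a_0}^{ij}(x_n)D_j h_0\big)=0 \quad\text{in }C^+_4,\qquad h_0=0\quad\text{on }T_4.
\end{equation*}
Thus $v:=h_0$ is an admissible competitor in \eqref{322}. It has $\Xint-_{C^+_4}|Dv|^2\,dx\le 1$ by lower semicontinuity of the norm under weak convergence (together with $|\Omega^{(k)}_4\triangle C^+_4|\to 0$), so the first inequality in \eqref{325} holds for this $v$, and $\Xint-_{C^+_4}|h_k-v|^2\,dx\to 0$ by the strong $L^2$ convergence $h_k\to h_0=v$, contradicting $\Xint-_{C^+_4}|h_k-v|^2\,dx>\varepsilon_0^2$ for all $k$. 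This contradiction proves the lemma.

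The main obstacle I anticipate is the careful handling of the moving boundaries $\partial_w\Omega^{(k)}_4$: one must show that weak solutions on the shrinking-to-half-cylinder domains converge to a solution on $C^+_4$ with the right (zero) boundary condition on $T_4$, and that the boundary normalization $\Xint-_{\Omega_4}|Dh|^2\,dx\le 1$ passes to $\Xint-_{C^+_4}|Dv|^2\,dx\le 1$. This is where the (A)-condition and the uniform higher integrability of Lemma~\ref{lem322} are essential — they provide, respectively, the uniform $H^1$ bound needed for weak compactness and the equi-integrability that converts weak convergence of the gradients into strong $L^2$ convergence on the region where the domains eventually coincide with $C^+_4$, so that no $L^2$ mass of $|Dh_k|^2$ escapes into the thin sliver $\Omega^{(k)}_4\setminus C^+_4$ or is lost near $T_4$. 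A secondary technical point is the weak-$*$ convergence of the one-variable averaged coefficients: since they depend only on $x_n$, this is essentially a one-dimensional weak-$*$ compactness statement in $L^\infty(-4,4)$, and ellipticity is preserved because $\xi^{\mathsf T}\overline{A_k}(x_n)\xi\ge\nu|\xi|^2$ pointwise passes to the weak-$*$ limit after testing against nonnegative $L^1$ functions.
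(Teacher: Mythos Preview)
Your overall contradiction--compactness strategy is exactly the paper's. The difference is that the paper keeps the averaged coefficients $\overline{a^{ij}}_{B'_4}(x_n)$ \emph{fixed} along the contradicting sequence and varies only the domains $\Omega^k_4$ (via $\delta=1/k$). With fixed bounded coefficients, the weak convergence $Dh_k\rightharpoonup Dh_0$ in $L^2(C_4^+)$ already suffices to pass to the limit in the weak formulation, so the limit $h_0$ automatically solves \eqref{322} with the \emph{same} coefficients; weak lower semicontinuity then gives $\Xint-_{C_4^+}|Dh_0|^2\le \liminf_k\Xint-_{\Omega_4^k}|Dh_k|^2\le 1$, and the strong $L^2$-convergence $h_k\to h_0$ contradicts \eqref{331}. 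No higher integrability and no strong gradient convergence are needed anywhere.

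By letting the coefficients $\overline{A_k}_{B'_4}$ vary with $k$ you introduce two genuine problems. First, the assertion that the uniform $L^{2+\sigma_2}$ bound on $Dh_k$ ``upgrades this to strong convergence of $Dh_k$ in $L^2$'' is false: higher integrability yields equi-integrability of $|Dh_k|^2$, not compactness of $Dh_k$ in $L^2$. Second --- and this is the real gap --- even granting your passage to the limit, the function $h_0$ solves the equation with the weak-$*$ limit coefficients $\overline{A_0}$, whereas for the $k$-th instance the problem \eqref{322} is posed with the coefficients $\overline{A_k}_{B'_4}$. Thus $v:=h_0$ is \emph{not} a weak solution of the $k$-th \eqref{322} and cannot be used to contradict the assumed failure at step $k$. (Note too that with only weak-$*$ convergent coefficients and weakly convergent gradients the product $\overline{A_k}\,Dh_k$ need not converge weakly to $\overline{A_0}\,Dh_0$ --- this is the homogenization obstruction --- so the limit equation you write is not justified either.) The remedy is simply to follow the paper and keep the coefficients fixed; then the higher integrability, the weak-$*$ limits of the one-variable coefficients, and the strong gradient convergence all become unnecessary.
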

\begin{proof}
If not, there would exist $\varepsilon_0>0$, $\left\{ h_k \right\}_{k=1}^{\infty}$ and
$\left\{\Omega_5^k \right\}_{k=1}^{\infty}$ such that $h_k$ is a weak solution of
\begin{equation}
\label{326}
\begin{cases}
D_{i } \left(\overline{ a_{ij}} _{  B^\prime _4 } (x_n) D_{j} h_k \right)
 = &\!\!\! 0  \quad \textrm{in} \ \Omega^k_4,\\
\hfill h_k  = &\!\!\! w \quad \textrm{on} \ \partial_w \Omega^k_4
\end{cases}
\end{equation}
with
\begin{equation}
\label{327}
C^+_5 \subset  \Omega_5^k  \subset  C_5 \cap \left\{ x_n > -\frac{10}{k}\right\},
\end{equation}
and
\begin{equation}
\label{328}
\Xint-_{\Omega_4^k} |Dh_k|^2 \ dx  \leq 1,
\end{equation}
but for any weak solution $v$ of
\begin{equation}
\label{329}
\begin{cases}
D_{i } \left(\overline{ a_{ij}} _{  B^\prime _4 } (x_n) D_{j} v \right)
= &\!\!\! 0  \quad \textrm{in} \ C^+_4,\\
\hfill v  = &\!\!\! 0 \quad \textrm{on} \ T_4,
\end{cases}
\end{equation}
with
\begin{equation}
\label{330}
\Xint-_{C_4^+} |D v|^2 \ dx  \leq 1,
\end{equation}
we have
\begin{equation}
\label{331}
\int_{C_4^+} |h_k-v|^2 \ dx  > \varepsilon_0^2.
\end{equation}

We now extend $h_k$ by zero from $\Omega^k_4$ to $C_4$ and denote it by $h_k$ also. Then it follows from Poincar\'{e}'s inequality and \eqref{328} that
$||h_k||_{H^{1}(C_4)} \leq c$. Then there exists a subsequence, which we still denote by $\left\{h_k \right\}$, and $h_{0} \in H^{1}(C_4^+)$ such that
\begin{equation}
\label{332}
h_k \rightharpoonup  h_{0} \text{ weakly in } H^{1}(C_4^+) \text{ and } h_k \rightarrow  h_{0} \text{ strongly in } L^{2}(C_4^+).
\end{equation}
From \eqref{326}, \eqref{327} and \eqref{332} we see that $h_{0}$ is a weak solution of
\begin{equation}
\label{333}
\begin{cases}
D_{i } \left( \overline{ a_{ij}} _{  B^\prime _4 } (x_n) D_{j} h_0 \right)
= &\!\!\! 0  \quad \textrm{in} \ C^+_4,\\
\hfill h_0  = &\!\!\! 0 \quad \textrm{on} \ T_4.
\end{cases}
\end{equation}
We next observe from \eqref{328}, \eqref{332} and weak lower semicontinuity property that
$$
\Xint-_{C_4^+} |Dh_0|^2 \ dx  \leq \liminf_{k \rightarrow \infty} \Xint-_{C^+_4} |Dh_k|^2 \ dx \leq 1.
$$
We then reach a contradiction to \eqref{331} from  \eqref{332}-\eqref{333}.
This completes the proof.
\end{proof}

Now we are in a position to obtain an analogue of Lemma~\ref{lem311} regarding gradient estimates up to the boundary for the weak solutions of \eqref{000}.
\begin{lem}
\label{lem334}
Let $u$ be a weak solution of \eqref{318}.
Then for any $\varepsilon > 0$, there is a small $\delta = \delta(\varepsilon) > 0 $ such that
if (\ref{316}),  (\ref{317}) and (\ref{319}) hold, then there exists a weak solution $v$ of (\ref{322}) such that
\begin{equation}\label{335}
||Dv_0||_{L^{\infty}(\Omega_3)} \leq c
\end{equation}
and
$$
\Xint-_{\Omega_2}|D(u-v_0)|^2 \leq \varepsilon^2,
$$
where $v_0$ is the zero extension of $v$ from $C_4^{+}$ to $C_4$.
\end{lem}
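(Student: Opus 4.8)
The plan is to chain together the boundary comparison estimates just established, following the same pattern as the proof of the interior Lemma~\ref{lem311}, but now across the full ladder \eqref{318} $\to$ \eqref{320} $\to$ \eqref{321} $\to$ \eqref{322}. First I would note that the map $w$ solving \eqref{320} makes $u-w \in H^1_0(\Omega_5)$ a weak solution of $D_i(a^{ij}D_j(u-w)) = D_i f^i$ in $\Omega_5$, so the standard $L^2$-estimate together with the smallness \eqref{317} gives $\Xint-_{\Omega_5}|D(u-w)|^2\,dx \leq c\Xint-_{\Omega_5}|F|^2\,dx \leq c\delta^2$; combined with \eqref{319} this yields $\Xint-_{\Omega_4}|Dw|^2\,dx \leq c$, so after a harmless renormalization the hypothesis of Lemma~\ref{lem324} is met. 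Next, $h$ solving \eqref{321} makes $w-h \in H^1_0(\Omega_4)$ a weak solution of $D_i(\overline{a^{ij}}_{B'_4}(x_n)D_j(w-h)) = -D_i([a^{ij}-\overline{a^{ij}}_{B'_4}(x_n)]D_j w)$; by H\"older's inequality, the higher integrability of $Dw$ from Lemma~\ref{lem322}, and the BMO-smallness \eqref{317}, exactly as in the computation displayed inside the proof of Lemma~\ref{lem311}, one gets $\Xint-_{\Omega_4}|D(w-h)|^2\,dx \leq c\delta^{\sigma_3}$ for some $\sigma_3>0$, hence also $\Xint-_{\Omega_4}|Dh|^2\,dx \leq c$.

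Then I would invoke Lemma~\ref{lem324} to produce a weak solution $v$ of \eqref{322} with $\Xint-_{C^+_4}|Dv|^2\,dx \leq 1$ and $\Xint-_{C^+_4}|h-v|^2\,dx \leq \varepsilon^2$. At this point $v$ solves the flat-boundary limiting problem with merely measurable coefficients depending only on $x_n$, so Lemma~\ref{lem226} gives the interior-up-to-$T$ Lipschitz bound $\|Dv\|_{L^\infty(C^+_3)} \leq c\|Dv\|_{L^2(C^+_4)} \leq c$, which is \eqref{335} once we pass to the zero extension $v_0$ (noting $Dv_0 = 0$ off $C^+_4$, so the bound propagates to $\Omega_3 \subset C^+_3 \cup (\text{flat part})$; here one uses that \eqref{316} traps $\Omega_3$ inside a thin slab around $C^+_3$). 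To upgrade the $L^2$-closeness of $h$ and $v$ in terms of function values to closeness of the \emph{gradients} $D(u-v_0)$ in $L^2(\Omega_2)$, I would combine the Caccioppoli/energy inequality for the equation solved by $h-v_0$ on $\Omega_2$ (using the Lipschitz control on $Dv_0$ to absorb the error terms) with the already-established gradient estimates $\Xint-|D(u-w)|^2 \leq c\delta^2$ and $\Xint-|D(w-h)|^2 \leq c\delta^{\sigma_3}$, and finally choose $\delta$ small enough that $c(\delta^2 + \delta^{\sigma_3} + \varepsilon^2\text{-type terms}) \leq \varepsilon^2$, after first having run Lemma~\ref{lem324} with a proportionally smaller $\varepsilon$.

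The main obstacle I anticipate is the last step: converting the $L^2$ control of $h-v$ (values, not gradients) coming out of the compactness Lemma~\ref{lem324} into an $L^2$ control of $D(h-v_0)$ on the slightly smaller cylinder $\Omega_2$. The standard device is a Caccioppoli inequality on $\Omega_2$ with a cutoff supported in $\Omega_3$, testing the equation for $h-v_0$ against $\eta^2(h-v_0)$; the right-hand side produces $\int |D\eta|^2|h-v_0|^2$, which is $\leq c\varepsilon^2$, plus a term involving the difference of the two (constant-in-$x'$) coefficient matrices and $Dv_0$, which is controlled because $\|Dv_0\|_{L^\infty} \leq c$ — but one must be careful that $h$ and $v$ solve equations with the \emph{same} coefficients $\overline{a^{ij}}_{B'_4}(x_n)$, so in fact that coefficient-difference term is absent and only the cutoff term survives. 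A secondary technical point is the geometry: because $\Omega_r$ is only Reifenberg-flat, not exactly $C^+_r$, one must verify that the slab containment \eqref{316} makes the symmetric difference of $\Omega_3$ and $C^+_3$ have measure $O(\delta)$, so that the $L^\infty$-bound on $Dv_0$ and the $L^2$-smallness transfer between the two domains with only an $O(\delta)$ error, which is then absorbed into the final choice of $\delta$.
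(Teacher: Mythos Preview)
Your overall strategy --- chaining $u\to w\to h\to v$ and then running a Caccioppoli-type estimate for $h-v_0$ on $\Omega_2$ to upgrade the $L^2$-closeness of values coming from Lemma~\ref{lem324} to $L^2$-closeness of gradients --- is exactly the paper's approach. There is, however, one concrete gap in the last step.

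You assert that, because $h$ and $v$ solve equations with the \emph{same} averaged coefficients $\overline{a^{ij}}_{B'_4}(x_n)$, ``only the cutoff term survives'' on the right-hand side of the Caccioppoli inequality for $h-v_0$. This is not correct, and the obstruction is not a coefficient mismatch but the zero extension itself: $v_0$ does \emph{not} weakly solve the homogeneous equation in all of $\Omega_4$. It agrees with $v$ on $C_4^+$, but on the thin slab $\Omega_4\setminus C_4^+\subset\{-10\delta<x_n<0\}$ it is identically zero, and a test function $\varphi\in H^1_0(\Omega_4)$ supported in $C_3$ need not vanish on $T_3$. A direct computation (this is what the paper does) shows that $v_0$ satisfies $D_i\big(\overline{a^{ij}}_{B'_4}(x_n)D_jv_0\big)=-D_n g^n$ in $\Omega_4$, with $g^n$ supported in $\{x_n<0\}$ and pointwise bounded by $c\|Dv\|_{L^\infty(C_3^+)}$. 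Consequently the energy estimate for $h-v_0$ reads
\[
\Xint-_{\Omega_2}|D(h-v_0)|^2\,dx \;\le\; c\int_{\Omega_3}\bigl(|h-v_0|^2+|g^n|^2\bigr)\,dx,
\]
and \emph{both} pieces must be handled. The $g^n$ term is controlled precisely by the two ingredients you already have in hand --- the Lipschitz bound \eqref{335} and the $O(\delta)$ measure of the slab --- yielding $\Xint-_{\Omega_3}|g^n|^2\le c\delta$. For the first term, your claim $\int|D\eta|^2|h-v_0|^2\le c\varepsilon^2$ is only immediate on $C_3^+$; on $\Omega_3\setminus C_3^+$ one has $v_0=0$, hence $|h-v_0|=|h|$, and one needs H\"older plus Sobolev (using $\int_{\Omega_4}|Dh|^2\le c$ and $|\Omega_3\setminus C_3^+|\le c\delta$) to get a contribution $c\delta^{2/n}$. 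Your closing remark about an ``$O(\delta)$ error'' from the geometry is the right instinct, but it enters through these two specific mechanisms --- the source term $g^n$ and the slab contribution to $\int|h-v_0|^2$ --- rather than as a mere transfer of estimates between $\Omega_3$ and $C_3^+$.
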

\begin{proof}
Let $w $ be the weak solution of \eqref{320} and $h$
the weak solution of \eqref{321}.
Then we can derive in a similar way as in the proof of Lemma \ref{lem311} that
\begin{equation}
\label{337}
\int_{\Omega_4} |D(u-h)|^2 \ dx \leq c \delta^{2}.
\end{equation}
From the maximality properties for  \eqref{320} and \eqref{321}, we obtain
\begin{equation}
\label{338}
\int_{\Omega_4} |Dh|^2 \ dx \leq c \int_{\Omega_4} |Dw|^2 \ dx \leq c  \int_{\Omega_5} |Du|^2 \ dx \leq c.
\end{equation}
In light of \eqref{338} and Lemma \ref{lem324}, we see that there is a weak solution $v$ of \eqref{322}
such that
\begin{equation}
\label{339}
\Xint-_{C_4^{+}} |Dv|^2 \ dx \leq c
\end{equation}
and
\begin{equation}
\label{340}
\int_{C_4^+} |h-v|^2 \ dx  \leq c_* \varepsilon^2,
\end{equation}
where $c_*$ is to be determined small in a universal way.

We extend $v$ by zero from $C_4^{+}$ to $C_4$ and denote it by $v_0$.
Then we derive from Lemma \ref{lem226} and \eqref{339} that
\begin{equation}
\label{341}
||Dv_0||_{L^{\infty}(\Omega_3)}=||Dv||_{L^{\infty}(C^{+}_{3})} \leq c ||Dv||_{L^{2}(C^+_4)} \leq c,
\end{equation}
which is \eqref{335}.

A direct computation shows that $v_0$ is a weak solution of
\begin{equation}
\label{342}
\left\{ \begin{array}{rclcc}
D_{i } \left( \overline{ a_{ij}} _{  B^\prime _4 } (x_n) D_{j} v_0 \right)& =  &  - D_n g^n  &  \textrm{ in }  &  \Omega_4,\\
v_0               & =  &  0   &  \textrm{ on }       &  \partial_w \Omega_4,
\end{array}\right.
\end{equation}
where
\begin{equation}
\label{343}
g^{n}=\left\{ \begin{array}{ll}
0 & \textrm{ if } x_n  \geq 0,\\
\overline{ a_{nn}} _{  B^\prime _4 } (x_n) D_n v(x^\prime,0)& \textrm{ if } x_n <0.
\end{array}
\right.
\end{equation}
Then in light of \eqref{321} and \eqref{342}, we find that
$h-v_0$ is a weak solution of
\begin{equation}
\label{344}
\left\{ \begin{array}{rclcc}
D_{i } \left( \overline{ a_{ij}} _{  B^\prime _4 } (x_n) D_{j}[h-v_0] \right)& =  &   D_n g^n  &  \textrm{ in }  &  \Omega_4,\\
h-v_0               & =  &  0   &  \textrm{ on }       &  \partial_w \Omega_4.
\end{array}\right.
\end{equation}
From standard $L^2$-estimate for \eqref{344}, we have
\begin{equation}
\label{345}
\Xint-_{\Omega_2} |D(h-v_0)|^2 \ dx \leq c \left( \int_{\Omega_3} \left(|h-v_0|^2  + |g^n|^{2}\right) dx\right).
\end{equation}
We estimate the right-hand side of \eqref{345} as follows:
\begin{align}\label{346}
\Xint-_{\Omega_3} |h-v_0|^2 \ dx    = &\ \Xint-_{ C_3^+  } | h-v |^2 \ dx + \frac{1}{|\Omega_3|}  \int_{ \Omega_3 \backslash C^+_3 } |h|^2 \ dx\\
\nonumber
  \leq &\ c_* \varepsilon^2   +  \frac{1}{|\Omega_3|} \left(\int_{ \Omega_3 } |h|^{\frac{2n}{n-2}} \ dx\right)^{\frac{n-2}{n}} \left|\Omega_3 \setminus C_3^{+}\right|^{\frac{2}{n}} \\
\nonumber
  \leq & \ c_* \varepsilon^2   +  c  \delta^{\frac{2}{n}}  \Xint-_{ \Omega_3 } |Dh|^2 \ dx\\
\nonumber
  \leq &\  c_* \varepsilon^2   +    c  \delta^{\frac{2}{n}}.
\end{align}
Here in the first line, we have used \eqref{316} and the fact that $v_0=0$ in $\Omega_4 \setminus C_4^{+}$.
In the second line, we have used \eqref{340} and H\"{o}lder's inequality. In the third line, we have used \eqref{316} and Sobolev inequality.
In the last line, we have used \eqref{338}.

Using \eqref{001}, \eqref{316}, \eqref{341} and \eqref{343}, we deduce
\begin{equation}
\label{347}
\Xint-_{\Omega_3} |g^n|^{2} \ dx  \leq    \frac{1}{|\Omega_3|} \int_{\Omega_3 \setminus C_3^{+}} \left|\overline{ a_{nn}} _{  B^\prime _4 } (x_n) D_n v(x',0)) \right|^{2} dx \leq  c \delta.
\end{equation}
But then \eqref{345}, \eqref{346} and \eqref{347} imply
\begin{equation}
\label{348}
\Xint-_{\Omega_2} |D(h-v_0)|^{2} \ dx  \leq    c  \left(c_* \varepsilon^2   +    \delta^{\frac{2}{n}} \right).
\end{equation}
We now combine \eqref{337} and \eqref{348}, to obtain
$$
\Xint-_{\Omega_2} | D (u-v_0)|^2 \ dx
\leq  c  \left( \delta +  c_* \varepsilon^2   +    \delta^{\frac{2}{n}} \right) \leq c  \left( c_* \varepsilon^2   +    \delta^{\frac{2}{n}} \right).
$$
Finally, we take $c_*$ and $\delta$ so small, in order to get the required estimate
$$
\Xint-_{\Omega_2} | D (u-v_0)|^2 \ dx
\leq  \varepsilon^2.$$
This completes the proof.
\end{proof}

\section{Gradient estimates in weighted Lebesgue spaces}\label{sec5}
\setcounter{equation}{0}
\setcounter{thm}{0}

In this section we will obtain the optimal weighted $W^{1,p}$-regularity for the Dirichlet problem \eqref{000}.
The main analytic tool of our approach is the maximal function, so let us recall first of all its definition and basic properties, see \cite{de1,St}.
\begin{defin}\label{def400}
Given a locally integrable function $h \in L^1_{loc}(\mathbb{R}^{n}),$ its Hardy--Little\-wood maximal function
$\mathcal{M}h$ is given by
$$
(\mathcal{M}h)(x)=\sup _{   C_r(x)  }\Xint-_{C_r(x) }|f(y)| \
dy,
$$
where the supremum is taken over all cylinders
$C_r(x)$ in $\mathbb{R}^{n}$ centered
at the point $x$ and of size $r>0.$ If $h$ is defined only in a
bounded domain $U \subset \mathbb{R}^{n},$ we
define its restricted maximal function as
$$
\mathcal{M}_{U}h =\mathcal{M} \left( h  \chi
_{U} \right),$$
where
$$
(h    \chi_{ U }) (x)= \begin{cases}
h(x) & \textrm{ if } x \in U, \\
0 & \textrm{ if }  x \not \in U.
\end{cases}
$$
\end{defin}

The important properties of the maximal function with respect to weights are summarized in the following
\begin{lem}
\label{lem401} {\em (\cite{To1})} Given a weight $w \in A_s$ for
some $s \in (1, \infty)$, suppose that $h\in L^s_w(\mathbb{R}^n)
\subset L^1_{loc}(\mathbb{R}^n).$ Then we have
$$
\frac{1}{c}    \Vert h\Vert _{L^{s}_w(  \mathbb{R}^{n})}
\leq    \Vert \mathcal{M} h\Vert _{L^{s}_w(  \mathbb{R}^{n})}\leq
c\Vert h \Vert _{L^{s}_w(  \mathbb{R}^{n})},
$$
where $c=c(n,s,[w]_s)>0$ is a universal constant. In the particular case $w(x)=1,$ it follows
\begin{equation}\label{403-1}
\big|\{x\in \mathbb{R}^{n}\colon\ \left(\mathcal{M} h \right)(x)
> \lambda \}\big|  \leq    c   \frac{1}{\lambda }\int |h(x)\;  dx,   \ \  (\lambda>0),
\end{equation}
with a universal constant $c=c(n)>0.$
\end{lem}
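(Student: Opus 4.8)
The plan is to read Lemma~\ref{lem401} as the Muckenhoupt maximal theorem, only phrased with the cylinders $C_r(x)$ in place of cubes. Since a cylinder is comparable to a ball and to a cube with purely dimensional constants, the $A_s$ condition stated in the excerpt is equivalent to the classical one, and the classical machinery applies verbatim. I would split the assertion into three pieces: the lower bound $c^{-1}\|h\|_{L^s_w}\le\|\mathcal{M}h\|_{L^s_w}$, the unweighted weak-$(1,1)$ inequality \eqref{403-1}, and the upper (strong-type) bound $\|\mathcal{M}h\|_{L^s_w}\le c\|h\|_{L^s_w}$.

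The lower bound and \eqref{403-1} are the soft parts. For the lower bound, the Lebesgue differentiation theorem along the cylinder basis gives $|h(x)|\le(\mathcal{M}h)(x)$ for a.e.\ $x$, whence $\|h\|_{L^s_w(\mathbb{R}^n)}\le\|\mathcal{M}h\|_{L^s_w(\mathbb{R}^n)}$ with constant $1$. For \eqref{403-1}, fix $\lambda>0$; every point $x$ with $(\mathcal{M}h)(x)>\lambda$ lies in some cylinder $C_{r_x}(x)$ with $\int_{C_{r_x}(x)}|h|>\lambda\,|C_{r_x}(x)|$, and a Vitali ($5r$-type) covering argument extracts a countable pairwise disjoint subfamily $\{C_i\}$ whose fixed dilates cover $\{\mathcal{M}h>\lambda\}$, yielding $|\{\mathcal{M}h>\lambda\}|\le c(n)\sum_i|C_i|\le\frac{c(n)}{\lambda}\sum_i\int_{C_i}|h|\le\frac{c(n)}{\lambda}\int|h|\,dx$.

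The upper bound is where the real work sits, and I would carry it out in two stages. \emph{Stage 1 (weighted weak-$(s,s)$).} Run the same Vitali selection at level $\lambda$. For each selected $C_i$, H\"older's inequality gives $\int_{C_i}|h|=\int_{C_i}|h|w^{1/s}w^{-1/s}\le\big(\int_{C_i}|h|^s w\big)^{1/s}\big(\int_{C_i}w^{-1/(s-1)}\big)^{(s-1)/s}$; combining this with $\lambda\,|C_i|<\int_{C_i}|h|$ and the defining $A_s$ inequality for $C_i$ produces $\lambda^s\,w(C_i)\le[w]_s\int_{C_i}|h|^s w\,dx$. Since $A_s$ weights are doubling, with doubling constant controlled by $[w]_s$ and $n$ (e.g.\ via Lemma~\ref{lem010} or directly from the definition of $[w]_s$), the $w$-mass of the covering dilates is $\le c\,w(C_i)$, so summing over the disjoint $C_i$ gives $w(\{\mathcal{M}h>\lambda\})\le\frac{c\,[w]_s}{\lambda^s}\int|h|^s w\,dx$. \emph{Stage 2 (self-improvement and Marcinkiewicz-type integration).} Here I invoke the open-ended property of the Muckenhoupt classes: a reverse H\"older inequality for $A_s$ weights shows $w\in A_{s_0}$ for some $s_0=s_0(n,s,[w]_s)\in(1,s)$, hence Stage 1 also supplies the weighted weak-$(s_0,s_0)$ bound for $\mathcal{M}$. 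Now split $h=h\chi_{\{|h|>\lambda/2\}}+h\chi_{\{|h|\le\lambda/2\}}$, so that $\{\mathcal{M}h>\lambda\}\subset\{\mathcal{M}(h\chi_{\{|h|>\lambda/2\}})>\lambda/2\}$, apply weak-$(s_0,s_0)$ to the first piece, and integrate the distribution function: $\int(\mathcal{M}h)^s w=s\int_0^\infty\lambda^{s-1}w(\{\mathcal{M}h>\lambda\})\,d\lambda\le c\int_0^\infty\lambda^{s-1-s_0}\int_{\{|h|>\lambda/2\}}|h|^{s_0}w\,dx\,d\lambda$, and Tonelli (using $s-s_0>0$, so the inner $\lambda$-integral converges at $0$) collapses this to $c\int|h|^s w\,dx$. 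This gives $\|\mathcal{M}h\|_{L^s_w}\le c\|h\|_{L^s_w}$ with $c=c(n,s,[w]_s)$, completing the proof.

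I expect the main obstacle to be the input to Stage 2 — the self-improving (open-ended) property $A_s=\bigcup_{s_0<s}A_{s_0}$, equivalently the reverse H\"older inequality for $A_s$ weights — which is the one genuinely nontrivial classical ingredient; everything else is the Vitali lemma plus bookkeeping. In a fully self-contained write-up I would either derive the reverse H\"older inequality by a Gehring-type iteration seeded by the $A_s$ bound, or, as the cited reference \cite{To1} does, simply quote it; the weighted weak-$(s,s)$ estimate of Stage 1 is robust and uses only the definition of $[w]_s$ together with doubling.
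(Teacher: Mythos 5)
Your proof is correct and follows the standard route (Lebesgue differentiation for the lower bound, Vitali covering for the weak-$(1,1)$ estimate, the weighted weak-$(s,s)$ bound from H\"older plus the $A_s$ condition plus doubling, then self-improvement to $A_{s_0}$ and Marcinkiewicz-type integration for the strong-type bound). The paper itself supplies no proof of Lemma~\ref{lem401}---it merely cites \cite{To1}---and your argument is precisely the one found in that reference and in every standard treatment, with only the cosmetic substitution of cylinders for cubes, which you correctly observe changes nothing up to dimensional constants.
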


We will use also the following standard result from the  classical
measure theory regarding weighted Lebesgue spaces.
\begin{lem}\label{lem404}
{\em (\cite{To1})} Given a weight $w \in A_s$ for some $s \in (1,
\infty),$ let $h$ be a nonnegative function lying in $L^s_w(U)
\subset L^1(U)$ for some bounded domain $U \subset \mathbb{R}^n.$
Let $\theta>0$ and $\lambda >1$ be constants. Then
$$
h\in  L^{s}_w(U) \Longleftrightarrow  S = \sum _{ k \geq 1 } \lambda^{ ks  } w\big(\{ x  \in U \colon\ h(x) >  \theta  \lambda^{k}\} \big) < \infty
$$
with
$$
\frac{1}{c}S \leq     \Vert h \Vert^s_{L^{s}_w( U)}  \leq c\left(w(U) + S\right),
$$
for some universal constant $c=c(\theta, \lambda, s).$
\end{lem}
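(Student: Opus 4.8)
The statement to be proved is the level-set characterization of weighted Lebesgue spaces: for a nonnegative $h\in L^1(U)$, one has $h\in L^s_w(U)$ if and only if the series $S=\sum_{k\ge1}\lambda^{ks}w(\{x\in U:h(x)>\theta\lambda^k\})$ converges, together with the two-sided comparison of $S$ with $\|h\|_{L^s_w(U)}^s$. The proof is a standard dyadic-layer-cake argument adapted to the weighted measure $d\mu=w\,dx$; the only nonroutine point is bookkeeping the constants $\theta,\lambda,s$. I would not need any property of $w$ beyond its being a (positive, locally integrable) weight so that $\mu$ is a genuine measure with $\mu(U)=w(U)<\infty$ (the latter because $w\in A_s$ implies $w\in L^1_{loc}$ and $U$ is bounded).

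\textbf{Key steps.} First, set $\mu(E)=w(E)$ and for $k\ge 0$ write $\Lambda_k=\{x\in U: h(x)>\theta\lambda^k\}$, so the $\Lambda_k$ are decreasing and $S=\sum_{k\ge1}\lambda^{ks}\mu(\Lambda_k)$. The crude but decisive estimate is the two-sided pointwise-to-sum comparison: for each $x\in U$ with $h(x)>\theta\lambda$, letting $k(x)\ge1$ be the largest integer with $h(x)>\theta\lambda^{k(x)}$, one has $\theta\lambda^{k(x)}<h(x)\le\theta\lambda^{k(x)+1}$, hence $h(x)^s$ is comparable to $\lambda^{sk(x)}$ up to the factor $\theta^s$ and $\theta^s\lambda^s$. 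Integrating $h^s$ over the region $\{h>\theta\lambda\}$ and grouping $x$ according to the value of $k(x)$ (note $x\in\Lambda_k\setminus\Lambda_{k+1}$ exactly when $k(x)=k$) gives
\begin{equation*}
\theta^s\sum_{k\ge1}\lambda^{sk}\,\mu(\Lambda_k\setminus\Lambda_{k+1})\ \le\ \int_{\{h>\theta\lambda\}}h^s\,d\mu\ \le\ \theta^s\lambda^s\sum_{k\ge1}\lambda^{sk}\,\mu(\Lambda_k\setminus\Lambda_{k+1}).
\end{equation*}
Second, I would reconcile $\sum_k\lambda^{sk}\mu(\Lambda_k\setminus\Lambda_{k+1})$ with $S=\sum_k\lambda^{sk}\mu(\Lambda_k)$ by Abel summation: since the $\Lambda_k$ decrease, $\mu(\Lambda_k\setminus\Lambda_{k+1})=\mu(\Lambda_k)-\mu(\Lambda_{k+1})$ and a rearrangement of the (absolutely convergent, once either side is finite) series yields $\sum_{k\ge1}\lambda^{sk}\bigl(\mu(\Lambda_k)-\mu(\Lambda_{k+1})\bigr)=\lambda^s\mu(\Lambda_1)+(1-\lambda^{-s})\sum_{k\ge2}\lambda^{sk}\mu(\Lambda_k)$, which is comparable to $S$ with constants depending only on $\lambda,s$; in particular one series is finite iff the other is. Third, I would absorb the complementary region $\{h\le\theta\lambda\}$: there $\int_{\{h\le\theta\lambda\}}h^s\,d\mu\le(\theta\lambda)^s\mu(U)=(\theta\lambda)^s w(U)$, which is the source of the additive $w(U)$ term in the upper bound. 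Combining these three estimates gives $\tfrac1c S\le\|h\|_{L^s_w(U)}^s\le c\bigl(w(U)+S\bigr)$ for a constant $c=c(\theta,\lambda,s)$, and the equivalence $h\in L^s_w(U)\iff S<\infty$ follows at once (the forward direction also using $h\in L^1(U)$ only to the extent needed to ensure $\mu(\Lambda_k)\to 0$, which however is automatic from $\mu(U)<\infty$ and $\bigcap_k\Lambda_k\subset\{h=\infty\}$ having $\mu$-measure zero since $h\in L^1$).

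\textbf{Main obstacle.} There is no real analytic difficulty here; the statement is a measure-theoretic lemma and the proof is the classical layer-cake/Abel-summation computation. The only thing requiring care is keeping the chain of comparison constants clean so that the final $c$ genuinely depends on $\theta,\lambda,s$ alone (and not on $w$, $U$, or $h$), which is why I isolate the three contributions — the dyadic main term, the Abel-summation reindexing, and the low-level cutoff $\{h\le\theta\lambda\}$ — and treat each with its own explicit bound before combining. Since this is cited from \cite{To1}, I would in fact present only this sketch rather than the full routine verification.
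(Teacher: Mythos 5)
The paper cites this lemma from Torchinsky \cite{To1} and gives no proof of its own, so there is nothing to compare against; your sketch supplies the standard layer-cake/Cavalieri argument and it is essentially correct. Two small remarks on tightening it. First, your Abel-summation step needs, in the direction $\sum_k\lambda^{sk}\mu(\Lambda_k\setminus\Lambda_{k+1})<\infty\Rightarrow S<\infty$, that the boundary term $\lambda^{sN}\mu(\Lambda_{N+1})$ tends to zero; this is not automatic from $\mu(\Lambda_N)\to0$ alone, but follows from $\lambda^{sN}\mu(\Lambda_N)\leq\sum_{j\geq N}\lambda^{sj}\mu(\Lambda_j\setminus\Lambda_{j+1})$, which is the tail of your convergent series (using $\mu\bigl(\bigcap_k\Lambda_k\bigr)=0$). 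A slightly cleaner route avoiding this boundary-term bookkeeping is a direct Tonelli rearrangement: $S=\sum_{k\geq1}\lambda^{sk}\sum_{j\geq k}\mu(\Lambda_j\setminus\Lambda_{j+1})=\sum_{j\geq1}\mu(\Lambda_j\setminus\Lambda_{j+1})\sum_{k=1}^{j}\lambda^{sk}$, and the inner geometric sum is pinched between $\lambda^{sj}$ and $\frac{\lambda^{s}}{\lambda^{s}-1}\lambda^{sj}$, giving the two-sided comparison immediately with constants in $\lambda,s$ only. Second, you use $h\in L^1(U)$ to get $h<\infty$ Lebesgue-a.e., and hence (since Lebesgue-null implies $w\,dx$-null) $\mu$-a.e.; that deserves being said explicitly, since the measure in play is $w\,dx$ and not Lebesgue measure. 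With these two points made precise, your sketch is a complete and correct proof of the lemma.
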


In what follows, our approach is mainly based on the following version of the Vitali
covering lemma stated in the settings of weighted measurable sets.
\begin{lem}\label{lem407}
Let $\Omega$ be a bounded,
$(\delta,1)$-Reifenberg flat set, and let $w$ be a weight from $A_s$ for some $s \in (1, \infty).$
Suppose the measurable sets $D \subset E \subset \Omega$ have the following properties:
\begin{equation}
\label{c1}
w\left(D \cap C_1(y)\right) < \varepsilon w\left( C_1(y)\right) \textrm{ for some } \varepsilon \in (0,1) \textrm{ and for every y } \in \Omega;
\end{equation}
\begin{equation}\label{c2}
\begin{cases}
  \text{for any cylinder } C_r(y) \text{ with } r>0 \text{ and } y \in \Omega,\\
w(D \cap C_r(y)) \geq \varepsilon w(C_r(y)) \text{ implies } \Omega \cap C_r(y) \subset E.
\end{cases}
\end{equation}
Then there holds
$$
w(D) \leq  \gamma_2 \varepsilon   \    w(E)
$$
for some positive constant  $\gamma_2=\gamma_2(\delta,n,s,[w]_s).$
\end{lem}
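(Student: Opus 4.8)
The plan is to run a Calder\'on--Zygmund stopping-radius argument relative to the doubling measure $dw:=w\,dx$. Since $w\in A_s$, this measure is doubling with respect to the cylinders $C_r(\cdot)$ and $w(\mathbb{R}^n)=\infty$, so the Lebesgue differentiation theorem applies to it. Discarding a set of $w$-measure zero we may therefore assume that for every $x\in D$ one has $\lim_{r\to0^+}w(D\cap C_r(x))/w(C_r(x))=1$. The functions $r\mapsto w(D\cap C_r(x))$ and $r\mapsto w(C_r(x))$ are continuous (because $\partial C_r(x)$ is Lebesgue-null and $w\in L^1_{loc}$), while the ratio tends to $0$ as $r\to\infty$, since $w(C_r(x))\to\infty$ whereas $w(D)\le w(\Omega)<\infty$ by boundedness of $\Omega$; condition \eqref{c1} records the quantitative version of this smallness at the unit scale. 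Hence the set $\{r>0:\ w(D\cap C_r(x))\ge\varepsilon\,w(C_r(x))\}$ is nonempty, closed and bounded, with a bound uniform in $x\in\Omega$; let $r_x$ be its supremum. Then
$$
w\big(D\cap C_{r_x}(x)\big)\ge\varepsilon\,w\big(C_{r_x}(x)\big),\qquad w\big(D\cap C_r(x)\big)<\varepsilon\,w\big(C_r(x)\big)\quad\text{for every } r>r_x .
$$

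Next I would apply the classical Vitali covering lemma to the (uniformly bounded) family $\{C_{r_x}(x):x\in D\}$ and extract a countable pairwise disjoint subfamily $\{C_i:=C_{r_i}(x_i)\}_i$, with $x_i\in D$, whose fivefold dilates cover $D$ up to a $w$-null set. Consequently
$$
w(D)\le\sum_i w\big(D\cap C_{5r_i}(x_i)\big)<\varepsilon\sum_i w\big(C_{5r_i}(x_i)\big)\le c\,\varepsilon\sum_i w\big(C_{r_i}(x_i)\big),
$$
where the second inequality uses $5r_i>r_i=r_{x_i}$ together with the second density estimate above, and the last one uses the doubling of $dw$, so $c=c(n,s,[w]_s)$. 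This is the step where taking the stopping radius over all scales pays off: the factor $\varepsilon$ survives the passage from $C_{r_i}(x_i)$ to the larger cylinder $C_{5r_i}(x_i)$ produced by the covering lemma, whatever its size. (An equivalent route is to keep the stopping radii $\le1$ via \eqref{c1} and then upgrade \eqref{c1} to all larger scales by covering $C_r(y)\cap\Omega$ with boundedly overlapping unit cylinders centered in $\Omega$ and invoking the doubling of $w$.)

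It remains to bound $\sum_i w(C_{r_i}(x_i))$ by a multiple of $w(E)$, and this is where \eqref{c2} and Lemma~\ref{lem010} enter. By the choice of $r_i$ we have $w(D\cap C_{r_i}(x_i))\ge\varepsilon\,w(C_{r_i}(x_i))$, and $x_i\in D\subset\Omega$, so \eqref{c2} applies at the cylinder $C_{r_i}(x_i)$ and gives $\Omega\cap C_{r_i}(x_i)\subset E$, hence $\Omega\cap C_{r_i}(x_i)\subset E\cap C_{r_i}(x_i)$. Lemma~\ref{lem010} then yields $w(C_{r_i}(x_i))\le\gamma_1\,w(\Omega\cap C_{r_i}(x_i))\le\gamma_1\,w(E\cap C_{r_i}(x_i))$. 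Summing and using the disjointness of the $C_{r_i}(x_i)$,
$$
w(D)\le c\,\gamma_1\,\varepsilon\sum_i w\big(E\cap C_{r_i}(x_i)\big)=c\,\gamma_1\,\varepsilon\,w\!\Big(E\cap\bigcup_i C_{r_i}(x_i)\Big)\le c\,\gamma_1\,\varepsilon\,w(E),
$$
which is the claim with $\gamma_2:=c\,\gamma_1$.

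The combinatorics above are routine once the ingredients are assembled; the two points that require care are: (i) organizing the stopping radius so that the smallness factor $\varepsilon$ is not lost when a cylinder gets dilated by the covering lemma (handled as indicated), and (ii) the use of Lemma~\ref{lem010}, which is exactly what makes the density of $D$ in the ``abstract'' cylinder $C_{r_i}(x_i)$ interchangeable, up to constants, with its density in $\Omega\cap C_{r_i}(x_i)$, so that \eqref{c2} can be cashed in as a lower bound for $w(E)$. Reifenberg flatness of $\Omega$ is used only to guarantee that $\Omega$ is regular enough for Lemma~\ref{lem010} to apply (and, in the scales-$\le1$ variant, for the unit-scale covering near $\partial\Omega$), so the resulting $\gamma_2$ depends on $n$, $s$, $[w]_s$ and, at most, on $\delta$.
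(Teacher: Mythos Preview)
Your argument is the paper's: continuity of the density $r\mapsto w(D\cap C_r(x))/w(C_r(x))$, a stopping radius, Vitali covering, doubling of $w$ to absorb the factor $5$, hypothesis \eqref{c2} to place $\Omega\cap C_{r_i}(x_i)$ inside $E$, and then Lemma~\ref{lem010} together with the measure-density bound \eqref{mds} to compare $w(C_{r_i}(x_i))$ with $w(\Omega\cap C_{r_i}(x_i))$.

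One point to tighten: your two routes are not equally good, and it is the \emph{parenthetical} one that actually closes. The inequality $w(C_{r_i}(x_i))\le\gamma_1\,w(\Omega\cap C_{r_i}(x_i))$ does not follow from Lemma~\ref{lem010} alone --- it requires $|\Omega\cap C_{r_i}(x_i)|\ge c\,|C_{r_i}(x_i)|$, and Reifenberg $(\delta,1)$-flatness delivers that only for $r_i\le1$; once $r_i$ exceeds $\mathrm{diam}\,\Omega$ the Lebesgue ratio collapses and the constant blows up. Since your supremum $r_x$ taken over all $r>0$ may well exceed $1$ (nothing prevents $\varpi_x$ from dipping below $\varepsilon$ at $r=1$ and rising again), the main route breaks at exactly this step: the best you could salvage there is $w(C_{r_i})\le\varepsilon^{-1}w(\Omega\cap C_{r_i})$ from the stopping condition itself, which kills the factor $\varepsilon$. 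The paper keeps $r_y<1$ via \eqref{c1} --- your parenthetical route --- so that \eqref{mds} applies with a constant depending only on $\delta$, and then absorbs the overshoot to scale $5r_i$ into the doubling constant.
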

\begin{proof}
Fix any $y \in D$ and consider a continuous function
$$
\varpi(r)=\frac{ w\left(D \cap C_r(y)\right)} { w \left(C_r(y)\right)}, \ r>0.
$$
Then $\varpi(0)=\lim_{r \rightarrow 0+}\varpi(r)=1$. From the
assumption \eqref{c1}, we see $\varpi(1)< \varepsilon$. Thus, one can
find a number $r_{y}=r(y) \in (0,1)$ such that
\begin{equation}\label{v1} 
\varpi(r_{y})= \varepsilon,  \qquad \varpi(r) < \varepsilon,
\quad \forall r > r_{y}.
\end{equation}
Since $\{ D \cap C_{r_{y}}(y)\}_{y\in D }$ is an open covering of
$D$, there exist a disjoint subcovering $\{C_{r_i}(y_i)\}_{i \geq
1}$ with $r_i=r(y_i)<1$ such that
\begin{equation}\label{v2} 
\bigcup_{ i \geq 1 } [D \cap C_{5r_i}(y_i)] = D.
\end{equation}
We now compute as follows:
\begin{eqnarray*}
w(D) &  \stackrel{\eqref{v2} } {\leq }  &  w \left(\bigcup_{ i \geq
1 } [D \cap C_{5r_i}(y_i)] \right) \leq     \sum_{i \geq 1} w \left(D \cap C_{5r_i}(y_i) \right)\\
& \stackrel{\eqref{v1} } {< }  &   \varepsilon  \sum_{i \geq 1}  w
\left(C_{5r_i}(y_i)\right) \\
&  \leq  &  c   \,  \varepsilon  \sum_{i \geq 1} w \left(C_{r_i}(y_i)\right) \textrm{ by the doubling property of } w,\\
& \stackrel{\textrm{Lemma \ref{lem010}}} { \leq  }  &   c   \, \varepsilon
\sum_{i \geq
1} \left[\frac{|C_{r_i}(y_i)|}{| \Omega \cap C_{r_i }(y_i)|}\right]^{\beta}  w\left( \Omega \cap C_{ r_i }(y_i)\right)\\
& \stackrel{ \eqref{mds} } \leq &   c \left[\frac{2
\sqrt{2}}{1-\delta}\right]^{ n \beta} \varepsilon  \sum_{i \geq 1}
w\left(\Omega \cap C_{ r_i }(y_i)\right)  \\
& = & c \left[\frac{2 \sqrt{2}}{1-\delta}\right]^{ n \beta} \
\varepsilon  \ w \left( \bigcup_{i}[\Omega \cap  C_{r_i }(y_i)]\right)  \textrm{ by the disjoint covering}, \\
&  \stackrel{\eqref{c2},\eqref{v1}  } { \leq  }  &  c \left[\frac{2
\sqrt{2}}{1-\delta}\right]^{ n \beta} \ \varepsilon  \ w \left(E\right)
.
\end{eqnarray*}
This completes the proof.
\end{proof}

We will employ also the following invariance property under scaling and normalization which follows by a straightforward
 computations.
\begin{lem}\label{lem408}
Under the scaling
$$
A_\rho(x)=A(\rho x)\quad \textrm{and}\quad \Omega_\rho=\left\{\frac{1}{\rho} x \colon\  x \in \Omega \right\}
\qquad ( 0<\rho<1 ),
$$
the structure assumptions \eqref{001}, \eqref{002}
and the main assumption in Definition~\ref{def100} are invariant with the same
constants $\nu,$ $L,$ and the dilated scale $\frac{ R }{\rho}.$
Furthermore, along with the normalization
$$
u_{\lambda,\rho}(x)=\frac{u(\rho x)}{\lambda \rho }\quad \text{and}\quad
F_{\lambda,\rho}(x)=\frac{F(\rho x)}{\lambda} \ \ \ \ \ (\lambda >1),
$$
$u_{\lambda,\rho}(x)$ is a weak
solution of
$$
\begin{cases}
D_i \left(a_\rho^{ij}(x) D_{j} u_{\lambda,\rho}(x)\right)  =  &\!\!\!
D_i f_{\lambda,\rho}^{i}(x)\quad  \text{in}  \  \Omega_\rho,\ \,\,\\
   \hfill      u_{\lambda,\rho}(x)   =  &\!\!\!  0   \hfill  \text{on} \
\partial \Omega_\rho.
\end{cases}
$$
\end{lem}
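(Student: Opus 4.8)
The statement to be proved is the scaling and normalization invariance recorded in Lemma~\ref{lem408}. The plan is to verify each assertion by direct substitution, keeping careful track of how derivatives, averages and cylinders transform under the dilation $x\mapsto\rho x$.

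\textbf{Invariance of the structure conditions and Definition~\ref{def100}.} First I would treat \eqref{001} and \eqref{002}. Since $A_\rho(x)=A(\rho x)$ is just a reparametrization of $A$, we have $\|A_\rho\|_{L^\infty(\mathbb{R}^n,\mathbb{R}^{n^2})}=\|A\|_{L^\infty(\mathbb{R}^n,\mathbb{R}^{n^2})}\le L$, and for a.e.\ $x$ and every $\xi$, $a_\rho^{ij}(x)\xi_i\xi_j=a^{ij}(\rho x)\xi_i\xi_j\ge\nu|\xi|^2$; so $\nu$ and $L$ are unchanged. For the codimension~$1$ vanishing property, the key observation is that the change of variables $x=\rho y$ maps the cylinder $C_r(y_0)$ bijectively onto $C_{r/\rho}(y_0/\rho)$ and multiplies Lebesgue measure by $\rho^n$ (and $(n-1)$-dimensional measure on slices $\{x_n=\text{const}\}$ by $\rho^{\,n-1}$), so every \emph{normalized} integral $\Xint-$ is preserved. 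Hence, given $y\in\Omega_\rho$ and $r\in(0,R/\rho]$, the point $\rho y\in\Omega$ satisfies the hypotheses of Definition~\ref{def100} at scale $\rho r\in(0,R]$ (the distance to the boundary scales by $\rho$, so the dichotomy $\mathrm{dist}(\cdot,\partial\Omega)\gtrless\sqrt2\,\rho r$ matches $\mathrm{dist}(\cdot,\partial\Omega_\rho)\gtrless\sqrt2\,r$); composing the coordinate system furnished at $\rho y$ with the dilation produces a coordinate system at $y$. In the interior case, $\overline{A}_{B'_{\sqrt2\rho r}}(x_n)$ pulls back to $\overline{A_\rho}_{B'_{\sqrt2 r}}(x_n)$, and \eqref{101} at scale $\rho r$ becomes \eqref{101} for $A_\rho$ at scale $r$ with the \emph{same} $\delta$; similarly \eqref{103} for \eqref{103}. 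In the boundary case, the inclusions \eqref{102} are homogeneous of degree one in the spatial variable, so dilating them by $1/\rho$ turns the flatness bands $\{x_n\gtrless\pm3\rho r\delta\}$ into $\{x_n\gtrless\pm3r\delta\}$, again with the same $\delta$; thus $(A_\rho,\Omega_\rho)$ is $(\delta,R/\rho)$-vanishing of codimension~$1$. This is the assertion that $\delta$ is scale-invariant while the scale dilates to $R/\rho$.

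\textbf{Invariance of the PDE under normalization.} Next I would check that $u_{\lambda,\rho}$ solves the stated problem. Write $y=\rho x$. For a test function $\varphi\in H^1_0(\Omega_\rho)$, put $\psi(y)=\varphi(y/\rho)$, so that $\psi\in H^1_0(\Omega)$, $D_i\psi(y)=\rho^{-1}(D_i\varphi)(y/\rho)$, and $dy=\rho^n\,dx$. Starting from the weak formulation \eqref{def003} for $u$ tested against $\psi$ and substituting $y=\rho x$, the left-hand side becomes
\begin{equation*}
\int_{\Omega}a^{ij}(y)D_ju(y)D_i\psi(y)\,dy
=\rho^{n-1}\int_{\Omega_\rho}a^{ij}(\rho x)D_ju(\rho x)\,(D_i\varphi)(x)\,dx,
\end{equation*}
and using $D_j\big(u(\rho x)\big)=\rho\,(D_ju)(\rho x)$ together with $u_{\lambda,\rho}(x)=u(\rho x)/(\lambda\rho)$ this equals $\lambda\rho^{n-1}\int_{\Omega_\rho}a_\rho^{ij}D_ju_{\lambda,\rho}D_i\varphi\,dx$. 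The right-hand side transforms the same way: $\int_\Omega f^i(y)D_i\psi(y)\,dy=\rho^{n-1}\int_{\Omega_\rho}f^i(\rho x)(D_i\varphi)(x)\,dx=\lambda\rho^{n-1}\int_{\Omega_\rho}f_{\lambda,\rho}^iD_i\varphi\,dx$. Cancelling the common factor $\lambda\rho^{n-1}$ yields $\int_{\Omega_\rho}a_\rho^{ij}D_ju_{\lambda,\rho}D_i\varphi\,dx=\int_{\Omega_\rho}f_{\lambda,\rho}^iD_i\varphi\,dx$ for all $\varphi\in H^1_0(\Omega_\rho)$, i.e.\ $u_{\lambda,\rho}$ is a weak solution of the dilated Dirichlet problem; the boundary condition is inherited because $u=0$ on $\partial\Omega$ forces $u_{\lambda,\rho}=0$ on $\partial\Omega_\rho$, and the zero extension of $u_{\lambda,\rho}$ lies in $H^1_0(\mathbb{R}^n)$ since that of $u$ does.

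\textbf{Main obstacle.} There is no deep obstacle here; the lemma is a bookkeeping statement and the only thing that requires care is matching the scales and the normalizing factors. The slightly delicate point is the geometric part of Definition~\ref{def100}: one must be careful that the coordinate system in the definition is allowed to depend on the point \emph{and} the scale, so that composing it with the dilation is legitimate, and that the $\sqrt2$ and $3$ prefactors (chosen, per Remark~\ref{remnew}~4, large enough to accommodate rotations) are unaffected because they multiply the scale, which transforms covariantly. Once this is observed, every step is a routine change of variables, and I would present the argument at the level of detail above, leaving the purely mechanical verifications to the reader.
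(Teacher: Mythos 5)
Your verification is correct, and it supplies exactly the routine change-of-variables computation that the paper leaves to the reader (the paper states the lemma ``follows by a straightforward computations'' and offers no written proof). One cosmetic slip: the change of variables $x=\rho y$ (i.e.\ $y\mapsto\rho y$) maps $C_{r/\rho}(y_0/\rho)$ onto $C_r(y_0)$, the reverse of the direction you wrote, but this has no bearing on the computation since you only use that normalized averages are preserved and that the domain of integration in the $y$-variables is the dilated cylinder.
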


Based on the maximal function and scaling argument and making use of Lemma~\ref{lem311} and Lemma~\ref{lem334}, we have the following result.
\begin{lem}\label{lem409}
Let $|F|^2 \in L^{\frac{p}{2}}_w(\Omega) \in L^1(\Omega)$ with $w \in A_\frac{p}{2},$ $2<p< \infty,$ and let $u \in H^1_0(\Omega)$ be the weak solution of \eqref{000}.
Then, there exists a universal constant $\lambda_2>1$ such that for each $0 <
\varepsilon < 1$ one can select a small $\delta=
\delta(\varepsilon)>0$ such that if  $(A,  \Omega)$ is $(\delta, R)$-vanishing of codimension $1$ and $C_r(y)$ satisfies
\begin{equation}\label{410}
w\big(\left\{x \in \Omega\colon\ \mathcal{M}(|D
u|^{2})
> \lambda_2^2\right\}   \cap  C_{r}(y) \big) \geq  \varepsilon \
w\left(C_{r}(y)\right)
\end{equation}
for such a small $\delta,$ then we have
\begin{equation}\label{411}
C_r(y) \cap \Omega  \ \subset   \ \left\{x \in \Omega\colon\ \mathcal{M}(|D u|^{2}) > 1\right\} \cup
\left\{x \in \Omega\colon\ \mathcal{M}(|F|^{2})  >   \delta^{2}\right\}.
\end{equation}
\end{lem}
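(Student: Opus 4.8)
The plan is to prove the \emph{contrapositive}: assuming the inclusion \eqref{411} \emph{fails}, I will show that the density condition \eqref{410} cannot hold, that is,
$$
w\big(\{x\in\Omega\colon\ \mathcal{M}(|Du|^{2})>\lambda_2^{2}\}\cap C_{r}(y)\big)<\varepsilon\,w\big(C_{r}(y)\big).
$$
Failure of \eqref{411} furnishes a point $x_1\in C_r(y)\cap\Omega$ with $\mathcal{M}(|Du|^2)(x_1)\le1$ and $\mathcal{M}(|F|^2)(x_1)\le\delta^2$. First I would distinguish two regimes according to the position of $y$ relative to $\partial\Omega$: either $\operatorname{dist}(y,\partial\Omega)$ is large compared with $r$ (interior regime), or there is a boundary point $x_0$ with $|y-x_0|$ comparable to $r$ (boundary regime); the thresholds $\sqrt2\,r$ and $3r$ in Definition~\ref{def100} are arranged precisely so that one of these two alternatives always applies (cf.\ Remark~\ref{remnew}.4). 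In either regime the $(\delta,R)$-vanishing hypothesis supplies, at a scale $\varrho$ comparable to $r$, a rotated coordinate system in which $A$ has small $L^2$-oscillation in the $x'$-variables (inequality \eqref{101} or \eqref{103}) and — in the boundary regime — $\partial\Omega$ is trapped between two hyperplanes (inequality \eqref{102}). Translating the centre to $y$ (resp.\ to $x_0$), rotating, and then rescaling and normalizing via Lemma~\ref{lem408} with dilation factor $\varrho$ and amplitude $\lambda\ge1$ read off from $\mathcal{M}(|Du|^2)(x_1)\le1$, one lands exactly in the normalized configuration of Section~\ref{sec4}: the bound $\mathcal{M}(|Du|^2)(x_1)\le1$ yields \eqref{303} (resp.\ \eqref{319}), the bound $\mathcal{M}(|F|^2)(x_1)\le\delta^2$ together with \eqref{101}/\eqref{103} yields \eqref{304} (resp.\ \eqref{317}), and \eqref{102} yields \eqref{316}. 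Moreover $\lambda$ stays bounded by a purely dimensional constant, since $\Xint-_{C_{4\varrho}(y)}|Du|^2\le c_n\,\mathcal{M}(|Du|^2)(x_1)\le c_n$.

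Once inside the normalized configuration I would invoke the comparison results already at hand: Lemma~\ref{lem311} in the interior regime and Lemma~\ref{lem334} in the boundary regime. For any prescribed $\eta\in(0,1)$ these provide — if $\delta=\delta(\eta)$ is small enough — a competitor $V$ (the solution $v$ of the limiting problem \eqref{306}, resp.\ the zero extension $v_0$ of the solution of \eqref{322}) carrying a \emph{uniform Lipschitz bound} $\|DV\|_{L^{\infty}}\le N_0$ with $N_0=N_0(n,\nu,L)$ (Lemma~\ref{lem310} and estimate \eqref{335}), together with the proximity estimate $\Xint-_{C_1}|D(u-V)|^2\le\eta^2$ (resp.\ $\Xint-_{\Omega_2}|D(u-V)|^2\le\eta^2$). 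I now fix once and for all the \emph{universal} constant $\lambda_2$ by $\lambda_2^2:=\max\{2^n,4N_0^2\}+1$. The decisive step is the localization of the maximal function: for $z$ in the rescaled image of $C_r(y)$ (which may be taken inside $C_{1/3}$) and any cylinder $C_t(z)$, either $C_t(z)\subset C_1$, so that $\Xint-_{C_t(z)}|Du|^2\le\mathcal{M}_{\Omega\cap C_1}(|Du|^2)(z)$, or $C_t(z)\subset C_{2t}(x_1)$, so that $\Xint-_{C_t(z)}|Du|^2\le2^n\,\mathcal{M}(|Du|^2)(x_1)\le2^n$, using $\mathcal{M}(|Du|^2)(x_1)\le1$ and the invariance of this bound under the normalization. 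Since $\lambda_2^2>2^n$, it follows — everything now read in the rescaled picture, with $C_1$ replaced by $\Omega_2$ in the boundary regime — that
$$
\{z\colon\ \mathcal{M}(|Du|^{2})(z)>\lambda_2^{2}\}\cap C_r(y)\ \subset\ \{z\colon\ \mathcal{M}_{\Omega\cap C_1}(|Du|^{2})(z)>\lambda_2^{2}\}.
$$
From $|Du|^2\le2|D(u-V)|^2+2|DV|^2\le2|D(u-V)|^2+2N_0^2$, the sublinearity of $\mathcal{M}$ and $\lambda_2^2>4N_0^2$, the set on the right is contained in $\{z\colon\mathcal{M}_{\Omega\cap C_1}(|D(u-V)|^2)(z)>\lambda_2^2/4\}$, whence the weak type $(1,1)$ bound \eqref{403-1} and the proximity estimate give
$$
\big|\{z\colon\ \mathcal{M}(|Du|^{2})(z)>\lambda_2^{2}\}\cap C_r(y)\big|\ \le\ \frac{c_n}{\lambda_2^{2}}\int_{C_1}|D(u-V)|^{2}\ \le\ c_n\,\eta^{2}\,|C_r(y)|
$$
after undoing the scaling and normalization.

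It remains to convert this Lebesgue-measure smallness into smallness with respect to the weight $w$. Since $w\in A_{p/2}\subset A_\infty$, there are constants $c_w,\mu>0$ depending only on $n$, $p$ and $[w]_{p/2}$ such that $w(S)\le c_w\,(|S|/|Q|)^{\mu}\,w(Q)$ for every cylinder $Q$ and every measurable $S\subset Q$ — the standard $A_\infty$ property of Muckenhoupt weights (see \cite{To1,St}). Applying this with $Q=C_r(y)$ and $S=\{x\in\Omega\colon\mathcal{M}(|Du|^2)(x)>\lambda_2^2\}\cap C_r(y)$ yields
$$
w(S)\ \le\ c_w\,(c_n\,\eta^{2})^{\mu}\,w\big(C_r(y)\big),
$$
so it suffices to have chosen $\eta$ — and hence $\delta=\delta(\varepsilon)$ — so small that $c_w(c_n\eta^2)^{\mu}<\varepsilon$; this contradicts \eqref{410} and proves the lemma. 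The \textbf{main obstacle} is the reduction carried out in the first paragraph: one must organize the geometric dichotomy so that the $(\delta,R)$-vanishing coordinate change, the rescaling of Lemma~\ref{lem408} and the single point $x_1$ simultaneously deliver \emph{all} of the normalized hypotheses \eqref{302}--\eqref{304} (resp.\ \eqref{316}--\eqref{319}) with one and the same small $\delta$, while $\lambda_2$ remains universal; once this is in place, the maximal-function localization and the $A_\infty$ passage from Lebesgue to weighted density are routine.
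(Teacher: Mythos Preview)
Your proposal is correct and follows essentially the same approach as the paper's proof: both argue by contradiction from the failure of \eqref{411}, extract a point $x_1$ (the paper's $y_1$) with small maximal functions, split into interior and boundary regimes, invoke the coordinate change from Definition~\ref{def100} together with the scaling Lemma~\ref{lem408} to land in the normalized setting of Section~\ref{sec4}, apply Lemma~\ref{lem311} or Lemma~\ref{lem334} to produce the Lipschitz competitor, localize the maximal function via the dichotomy on cylinder sizes, use the weak $(1,1)$ bound \eqref{403-1} to get Lebesgue-measure smallness, and finally pass to $w$-smallness. The only cosmetic difference is that you invoke the generic $A_\infty$ inequality for the last step, whereas the paper quotes the specific form recorded in Lemma~\ref{lem010}; your explicit choice $\lambda_2^2=\max\{2^n,4N_0^2\}+1$ corresponds to the paper's $\lambda_2=\max\{\lambda_0,\lambda_1\}$ combined with the factor $8$ absorbed in its localization step.
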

\begin{proof}
We argue by contradiction.
Assume that $C_{r}(y)$ satisfies \eqref{410} but the claim
\eqref{411} is false. Then there exists a point $y_1 \in
\Omega_{r}(y) = C_{r}(y) \cap \Omega$ such that for each $\rho
> 0$ one has
\begin{equation}\label{412}
\Xint-_{C_{\rho}(y_{1})  } |D u|^2  \; dx \leq 1\quad \textrm{and}\quad \Xint-_{ C_{ \rho }(y_{1})
}|F|^2  \; dx \leq \delta ^{2}.
\end{equation}

We first investigate the interior case when $C_{ 5\sqrt{2} r }(y)  \subset  \Omega.$
According to Definition~\ref{def100}, there exists a new coordinate system, modulo reorientation of the axes and translation, depending on $y$ and $r,$ whose variables we denote by $z,$ such that in this new coordinate system $y=0,$ $y_1=z_1,$
\begin{equation}\label{413}
C_{   4 \sqrt{2} r  }    \subset  C_{  5 \sqrt{2} r } (z_1)   \subset \Omega,
\end{equation}
and
$$
\Xint-_{C_{  4  \sqrt{ 2 }  r } } \left |A(z^\prime, z_n)-
\overline { A} _{B^\prime_{ 4 \sqrt{2} r }} (z_n) \right| ^{2} \; dz \leq \delta^2.
$$
From \eqref{412} and \eqref{413} we see that
$$
\Xint-_{C_{4\sqrt{2} r} } |  D u|^2  \; dz \leq \left(\frac{5}{4}\right)^{n}\quad   \textrm{and}\quad  \Xint-_{C_{ 4 \sqrt{2}r }
}|F|^2   \; dz \leq  \left(\frac{5}{4}\right)^{n} \delta ^{2}.
$$
Applying Lemma~\ref{lem408} with $\rho = \sqrt{2}r$ and $\lambda= \sqrt{\left(\frac{5}{4}\right)^{n} },$
the last three inequalities imply that we are under the hypotheses of Lemma~\ref{lem311} which gives, after scaling back, that for the weak solution $v$ of
\begin{equation}\label{416}
\begin{cases}
D_{i} \left( \overline{a}^{ij}_{B^\prime_{4\sqrt{2}r}} (z_n) D_{j} v\right)
 = &\!\!\! 0  \quad \textrm{in} \ C_{2\sqrt{2}r},\\
\hfill v  = &\!\!\! u \quad \textrm{on} \ \partial C_{2\sqrt{2}r}
\end{cases}
\end{equation}
one has
\begin{equation}\label{417}
\Xint-_{C_{\sqrt{2}r}} |D(u-v)|^2 \;  dz \leq  \varepsilon ^2
\end{equation}
and
\begin{equation}\label{418}
\Vert D v \Vert_{L^\infty(C_{\sqrt{2}r})} \leq \lambda_0
\end{equation}
for some positive constant $\lambda_0=\lambda_0(\nu,L,n).$

We have
\begin{align*}
                \Big| \big\{z \in \Omega\colon\ & \ \mathcal{M}(|D
u|^{2}) >   8 \lambda_0^2\big\}  \cap  C_{r} \Big|\\
                \stackrel{ \eqref{416}}{\leq } & \  \left| \left\{z \in \Omega\colon\ \mathcal{M}_{ C_{2 \sqrt{2}r}} (|D(u-v)|^{2}) + \mathcal{M} _{ C_{2\sqrt{2}r}} (|Dv|^2) >  2 \lambda_0^2 \right\}   \cap  C_{ \sqrt{2}r}  \right|  \\
                  \leq\ \,  & \ \left| \left\{z \in \Omega\colon\ \mathcal{M} _{C_{2\sqrt{2}r}} (|D(u-v)|^{2}) > \lambda_0^2 \right\}    \cap
C_{ \sqrt{2}r}  \right| \\
           & \ \quad       +   \left| \left\{z \in \Omega\colon\ \mathcal{M}_{C_{2\sqrt{2}r}} (|Dv|^{2}) >  \lambda_0^2 \right\}   \cap   C_{ \sqrt{2}r} \right| \\
                  \stackrel{\eqref{418}}{=} &\ \left| \left\{z \in \Omega\colon\ \mathcal{M} _{C_{2\sqrt{2}r}} (|D(u-v)|^{2}) > \lambda_0^2 \right\}    \cap
C_{ \sqrt{2}r}  \right|\\
                \stackrel{\eqref{403-1}}{<}\, &\ \, c     \int_{C_{2 \sqrt{2}r }  } |D (u-v)|^2  \;  dz\\
 \stackrel{\eqref{417}}{\leq}  &   \  \,   c_0      \varepsilon^2 |C_r|,
\end{align*}
for some positive constant $c_0=c_0(\nu, L,n).$ In other words,
$$
\big| \left\{x \in \Omega\colon\ \mathcal{M}(|D
u|^{2}) >   8 \lambda_0^2\right\}  \cap  C_{r}(y) \big|  <  c_0  \varepsilon^2  |C_r(y)|,
$$
whence Lemma~\ref{lem010} yields
\begin{align}\label{419}
w \big( \left\{x \in C_r(y)\colon\  \mathcal{M}(|D
u|^{2})    >       8 \lambda_0^2\right\}\big)  < \ & \gamma_1 \left(c_0 \varepsilon^2\right)^\beta  w\left(C_r(y)\right) \\
\nonumber
  \leq \  &  c_1 \varepsilon^{2 \beta}w\left(C_r(y)\right)
\end{align}
with a positive constant $c_1=c_1\left(\nu,L,n, [w]_{\frac{p}{2}}\right).$

We next consider the boundary case when $C_{  5\sqrt{2} r  }(y)  \not \subset  \Omega.$ Now, for the sake of simplicity, we denote $c$ to mean a universal constant $c=c(\nu,L,n)$ that is independent of $\delta.$ We may also suppose that there is a boundary point $y_{0}\in \partial \Omega \cap
C_{  5\sqrt{2} r  }(y).$  According to Definition~\ref{def100}, there exists a new coordinate system, modulo reorientation of the spatial axes and translation, depending on $y_0$ and $r,$ whose variables we denote by $z,$ such that in this new coordinate system the origin is $y_0 + \delta_0 \overrightarrow{n_0}$ for some small $\delta_0>0$ and some inward unit normal $\overrightarrow{n_0}$ to $\partial \Omega$ at $y_0,$ $y=z_0,$ $y_1=z_1,$
\begin{equation}
\label{420}
C^+_{  24  \sqrt{2} r }   \    \subset   \
\Omega_{   24 \sqrt{2}   r   }    \   \subset       \   \left\{z \in C_{   24\sqrt{2}  r }\colon\  z_n
> -48 \sqrt{2} r \delta\right\},
\end{equation}
and
\begin{equation}
\label{421}
\Xint-_{C^+_{ 24\sqrt{2} r  } } \left |A(z^\prime, z_n) - \overline
{A} _{B^\prime_{24 \sqrt{2} r }} (z_n) \right| ^{2} \; dz \leq \delta^2.
\end{equation}
Then it follows from \eqref{412}, \eqref{420} and \eqref{421} that
\begin{equation}
\label{422}
\Xint-_{\Omega_{24\sqrt{2} r}  } |  Du|^2  \; dz \leq  2 \left(\frac{25}{24}\right)^{n}  \Xint-_{\Omega_{ 25 \sqrt{2} r}(z_1)    } |  D u|^2  \; dz \leq 2 \left(\frac{25}{24}\right)^{n}
\end{equation}
and
\begin{equation}
\label{423}
\Xint-_{\Omega_{24\sqrt{2} r} } | F|^2   \; dz \leq 2 \left(\frac{25}{24}\right)^{n}\delta ^{2}.
\end{equation}
Applying Lemma~\ref{lem408} with $\rho=6\sqrt{2}r$ and $\lambda=\sqrt{ 2 \left(\frac{25}{24}\right)^{n} },$  \eqref{420}--\eqref{423} show
that we are under the hypotheses of Lemma~\ref{lem334}, which gives after backscaling that
there exists a weak solution $v$ of
\begin{equation}
\label{424}
\begin{cases}
\displaystyle  D_i \left( \widetilde{a}^{ij}(z_n) D_{j}v \right)
 = &\!\!\! 0  \quad \textrm{in} \ C^+_{12\sqrt{2}r},\\
\hfill v  = &\!\!\! 0 \quad \textrm{on} \ T_{12\sqrt{2}r},
\end{cases}
\end{equation}
such that
\begin{equation}
\label{425}
\Xint-_{\Omega_{6\sqrt{2}r}} |D(u-v_0)|^2 \  dz \leq  \varepsilon^2
\end{equation}
and
\begin{equation}
\label{426}
\Vert Dv_0 \Vert_{L^\infty\left(C_{6\sqrt{2}r}\right)}=\Vert Dv \Vert_{L^\infty\left(C^+_{6\sqrt{2}r}\right)}  \leq \lambda_1,
\end{equation}
where $v_0$ is the zero extension of $v$ from $C^+_{12\sqrt{2}r}$ to $C_{12\sqrt{2}r}.$

\noindent
We have
\begin{align*}
\big| \{ z \in \Omega\colon \ \mathcal{M}& (|Du|^{2}) >  8 \lambda_1^2\}  \cap  C_{r}(z_0) \big| \\
\stackrel{\eqref{420}, \eqref{424}}{\leq}  & \ \left| \left\{ z \in \Omega_{6\sqrt{2}r}\colon\  \mathcal{M}_{  \Omega_{12\sqrt{2}r}  } (|D(u-v_0)|^{2})+\mathcal{M}_{  \Omega_{12\sqrt{2}r}  } (|Dv_0|^{2}) > 2 \lambda_1^2 \right\}  \right| \\
\stackrel{\eqref{424}}{\leq}\quad \  &\  \left| \left\{ z \in \Omega\colon\  \mathcal{M}_{\Omega_{12\sqrt{2}r}  } (|D(u-v_0)|^{2}) >  \lambda_1^2 \right\}   \cap  C_{6\sqrt{2}r} \right| \\
&\ \quad +  \left| \left\{ z \in \Omega\colon\  \mathcal{M}_{  \Omega_{12\sqrt{2}r}  }  (|Dv_0|^{2}) >  \lambda_1^2 \right\}   \cap  C_{6\sqrt{2}r} \right| \\
\stackrel{\eqref{426}} {=} \quad\ &\  \left| \left\{ z \in \Omega\colon\  \mathcal{M}_{\Omega_{12\sqrt{2}r}  } (|D(u-v_0)|^{2}) >  \lambda_1^2 \right\}   \cap  C_{6\sqrt{2}r} \right| \\
\stackrel{\eqref{403-1}}{  \leq  } \quad\   & \  c   \     \int_{\Omega_{12 \sqrt{2}r }  } |D (u-v_0)|^2  \  dz \\
\stackrel{\eqref{425}} {\leq}   \quad\  & \   c   \   \varepsilon^2 \  |\Omega_{12 \sqrt{2} r}|   <    c  \  \varepsilon |C_r(z_0)|.
\end{align*}
This way
$$
\big| \left\{x \in \Omega\colon\  \mathcal{M}(|D
u|^{2}) >   8 \lambda_1^2\right\}  \cap  C_{r}(y) \big|  <  c  \varepsilon  |C_r(y)|
$$
and therefore Lemma~\ref{lem010} implies
\begin{equation}\label{427}
w\big( \left\{x \in \Omega\colon\  \mathcal{M}(|D
u|^{2}) >   8 \lambda_1^2\right\}  \cap  C_{r}(y) \big)  <  c_2  \varepsilon^{\beta}  w\left(C_r(y)\right)
\end{equation}
with a constant $c_2=c_2\left(\nu,L,n, [w]_{\frac{p}{2}}\right)>0.$

Finally, we combine \eqref{419} and \eqref{427} and set $\lambda_2=\max\{\lambda_0, \lambda_1\}$ in order to reach a contradiction with \eqref{410} since $\varepsilon$ is arbitrary given.
This completes the proof.
\end{proof}

Fix now $\varepsilon$ and take $\delta$ and
$\lambda_2$ as given in Lemma~\ref{lem409}. We use the Vitali covering lemma (Lemma~\ref{lem407}) in order to obtain the power decay of
$$
w\left(\{(x,t) \in \Omega\colon\ \mathcal{M}(|Du|^2)
> \lambda_2^2\}\right).
$$

\begin{lem}\label{lem428}
Let $|F|^2 \in L^{\frac{p}{2}}_w(\Omega) \in L^1(\Omega)$ with $w \in A_\frac{p}{2},$ $2<p< \infty,$ and let $u \in H^1_0(\Omega)$ be the weak solution of \eqref{000}.
Suppose $(A,  \Omega)$ is $(\delta, R)$-vanishing of codimension $1$ and
set $\varepsilon_1=\gamma_1 \varepsilon.$ Then we have
\begin{align}\label{429}
&    w\big(\{x \in \Omega\colon\  \mathcal{M}(|D u|^{2}) > \lambda_2^{2k} \} \big)
\leq
\varepsilon _{1}^{k} w\big( \{x \in \Omega\colon\   \mathcal{M}(|D u|^{2}) >  1  \}
\big)\\
\nonumber
  &  \qquad\qquad  +  \sum ^{k}_{i=1}     \varepsilon _{1}^{i}
w\left( \{x\in \Omega\colon\  \mathcal{M} (|F|^{2}) >   \delta ^{2}
\lambda_2^{2( k-i )} \} \right)  \quad (k=1,2, \cdots).
\end{align}
\end{lem}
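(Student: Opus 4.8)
The plan is to derive \eqref{429} by iterating a ``one-step'' decay estimate obtained by plugging the good-$\lambda$ inequality of Lemma~\ref{lem409} into the weighted Vitali covering Lemma~\ref{lem407}. Throughout I take $R=1$ (Remark~\ref{remnew}.1), so $\Omega$ is $(\delta,1)$-Reifenberg flat, and I fix $\varepsilon$, the associated $\delta=\delta(\varepsilon)$, and the universal $\lambda_2>1$ provided by Lemma~\ref{lem409}. I would carry out the argument for data normalized, via the scaling Lemma~\ref{lem408}, so that
\begin{equation*}
w\big(\{x\in\Omega\colon\ \mathcal{M}(|Du|^2)>\lambda_2^{2}\}\cap C_1(y)\big)<\varepsilon\,w\big(C_1(y)\big)\qquad\text{for every }y\in\Omega;
\end{equation*}
this is reached by combining the a~priori bound \eqref{004}, the weak $(1,1)$ estimate \eqref{403-1} (so the Lebesgue measure of the left-hand set is as small as wanted once $\int_\Omega|F|^2$ is small), and the quantitative $A_\infty$-comparison in Lemma~\ref{lem010} (which promotes smallness in Lebesgue measure on a unit cylinder to smallness in $w$-measure), using that $y$ ranges over the bounded set $\Omega$. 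The passage to such normalized data is harmless since the target estimate \eqref{105} of Theorem~\ref{thm104} is invariant under $u\mapsto\mu u$, $F\mapsto\mu F$.

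With this in hand, I would prove, for every integer $k\ge0$,
\begin{equation*}
w\big(D^{k+1}\big)\le\varepsilon_1\Big(w\big(D^{k}\big)+w\big(\{x\in\Omega\colon\ \mathcal{M}(|F|^2)>\delta^{2}\lambda_2^{2k}\}\big)\Big),\qquad D^{j}:=\{x\in\Omega\colon\ \mathcal{M}(|Du|^2)>\lambda_2^{2j}\}.
\end{equation*}
To this end, apply Lemma~\ref{lem409} to the rescaled weak solution $u/\lambda_2^{k}$ of \eqref{000} with right-hand side $F/\lambda_2^{k}$ (allowed, since $\lambda_2^{k}\ge1$ and the hypothesis that $(A,\Omega)$ be $(\delta,R)$-vanishing is untouched), and then invoke Lemma~\ref{lem407} with
$$
D=D^{k+1},\qquad E=D^{k}\cup\{x\in\Omega\colon\ \mathcal{M}(|F|^2)>\delta^{2}\lambda_2^{2k}\}.
$$
Hypothesis \eqref{c1} holds because $D^{k+1}\subset D^{1}$ together with the normalization above; hypothesis \eqref{c2} is precisely the content of Lemma~\ref{lem409} for $u/\lambda_2^{k}$: if $w(D\cap C_r(y))\ge\varepsilon\,w(C_r(y))$ then \eqref{410} is met, whence \eqref{411} forces $\Omega\cap C_r(y)\subset E$. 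Lemma~\ref{lem407} then yields $w(D^{k+1})\le c\,\varepsilon\,w(E)$ with $c$ the covering constant; absorbing it into the factor $\varepsilon_1=\gamma_1\varepsilon$ of the statement and bounding $w(E)\le w(D^{k})+w(\{\mathcal{M}(|F|^2)>\delta^{2}\lambda_2^{2k}\})$ gives the displayed one-step inequality.

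Finally, I would iterate this inequality, starting at level $k$ and descending to $D^{0}=\{x\in\Omega\colon\ \mathcal{M}(|Du|^2)>1\}$; telescoping the geometric factors $\varepsilon_1$ produces
$$
w\big(D^{k}\big)\le\varepsilon_1^{k}\,w\big(D^{0}\big)+\sum_{i=1}^{k}\varepsilon_1^{i}\,w\big(\{x\in\Omega\colon\ \mathcal{M}(|F|^2)>\delta^{2}\lambda_2^{2(k-i)}\}\big),
$$
which is exactly \eqref{429}. The single delicate point is the uniform normalization securing hypothesis \eqref{c1} simultaneously for all $y\in\Omega$ — the only step that needs genuine work, through the interplay of the weak-type inequality with the strong doubling ($A_\infty$) property of the Muckenhoupt weight recorded in Lemma~\ref{lem010}; everything else is linear rescaling and bookkeeping with the covering lemma.
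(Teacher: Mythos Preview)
Your proposal is correct and follows essentially the same approach as the paper: normalize once (via Lemma~\ref{lem408} together with the weak $(1,1)$ bound and Lemma~\ref{lem010}) to secure condition~\eqref{c1}, then combine Lemma~\ref{lem409} with the weighted Vitali covering Lemma~\ref{lem407} to get the one-step decay, and iterate. The paper's proof is terser---it handles $k=1$ explicitly and then just says ``iteration of the foregoing arguments''---whereas you spell out the iteration by applying Lemma~\ref{lem409} to $u/\lambda_2^{k}$ and using $D^{k+1}\subset D^{1}$ to carry~\eqref{c1} through all levels; this is exactly the intended mechanism.
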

\begin{proof}
We set
$$
D=\{x \in \Omega\colon\  \mathcal{M}(|D u|^{2})
> \lambda_2^2\}
$$
and
$$
E=\{x \in \Omega\colon\  \mathcal{M}(|D u|^{2})
> 1\} \cup \{x \in \Omega\colon\  \mathcal{M}(|F|^{2}) >
\delta^{2}\}.
$$
Clearly, $D\subset E \subset \Omega$ and $w \left(D \cap C_1(y)
\right)< \varepsilon w \left(C_1(y)\right)$ by Lemma \ref{lem010}
and a proper normalization of the problem \eqref{000}, as one can
take $\lambda$ large enough in Lemma~\ref{lem408}. The second
condition of Lemma~\ref{lem407} is a direct consequence of
Lemma~\ref{lem409}. Applying Lemma~\ref{lem407}, we get the claim in
the case $k=1$ and iteration of the foregoing arguments leads to
\eqref{429} for each $k>1.$
\end{proof}

We are now ready to give a complete proof of our main result, Theorem~\ref{thm104}. For, we will employ Lemma~\ref{lem404} with $s= \frac{p}{2},$ $h=\mathcal{M}(|Du|^2),$ $\lambda=\lambda^2_2,$ $\theta=1.$
We have
\begin{align*}
            \sum ^{  \infty  }_{k=1} \lambda_2^{ 2(k \frac{p}{2})}& w\left(\left\{\mathcal{M}( |D u|^{2} )
>  \lambda_2^{2k}\right\}\right)\\
 \stackrel{ \eqref{429}}{\leq } \ \,\,\, &\ \sum ^{  \infty  }_{  k=1} \lambda_2^{ 2(k \frac{p}{2})} \varepsilon _{1}^{k} w\left( \{x \in \Omega\colon\   \mathcal{M}(|D u|^{2}) >  1  \}
\right) \\
&\quad +    \sum ^{  \infty  }_{  k=1} \lambda_2^{ 2(k \frac{p}{2})} \varepsilon _{1}^{k} \sum ^{k}_{i=1}     \varepsilon _{1}^{i}
w\left( \{x\in \Omega\colon\  \mathcal{M} (|F|^{2}) >   \delta ^{2}
\lambda_2^{2( k-i )} \} \right)\\
  \leq\quad\ \, &\  \sum ^{  \infty  }_{  k=1}   \left(\lambda_2 ^{ p} \varepsilon _{1}\right)^{k} w\left( \Omega\right) \\
&\quad +    \sum ^{  \infty  }_{  i=1 }     \left(\lambda_2 ^{ p} \varepsilon _{1} \right)^{i} \left( \sum ^{ \infty }_{ k=i }  \lambda_2 ^{ 2 (k-i)\frac{p}{2} }
w\left( \left\{x\in \Omega\colon\  \mathcal{M} (|F|^{2}) >    \delta ^{2}
\lambda_2^{2( k-i )}\right\}\right) \right) \\
  \stackrel{\textrm{Lemma~\ref{lem404}}}{\leq} &\ c \left(w(\Omega) + \Vert \mathcal{M} (|F|^{2})\Vert^{\frac{p}{2}}_{ L^{ \frac{p}{2}}_{w}(\Omega)} \right)  \sum ^{  \infty  }_{  k=1}   \left(\lambda_2 ^{ p} \varepsilon _{1}\right)^{k}\\
 \stackrel{\textrm{Lemma~\ref{lem401}}}{\leq} &\  c \left(w(\Omega) + \Vert |F|^{2}\Vert^{\frac{p}{2}}_{ L^{ \frac{p}{2}}_{w}(\Omega)} \right) \sum ^{  \infty  }_{  k=1}   \left(\lambda_2 ^{ p} \varepsilon _{1}\right)^{k}
\end{align*}
for some universal constant $c=c\left(\delta, \nu,L,n,[w]_{\frac{p}{2}}\right)>0.$

We then first select $\varepsilon>0$ so small to have $\lambda_2 ^{ p} \varepsilon _{1}<1.$
In view of Lemma~\ref{lem409}, one can find a small constant $\delta=\delta \left(\nu, L, n, [w]_{\frac{p}{2}}\right)$  such that
$$
\sum ^{  \infty  }_{  k=1} \lambda_2^{ 2(k \frac{p}{2})} w\left(\left\{\mathcal{M}( |D u|^{2} )
>  \lambda_2^{2k}\right\}\right) \leq  c \left(w(\Omega) + \Vert |F|^{2}\Vert^{\frac{p}{2}}_{ L^{ \frac{p}{2}}_{w}(\Omega)} \right)
$$
holds true for now fixed small $\delta>0$ for all $(A,\Omega)$ which are $(\delta,R)$-vanishing of codimension $1.$
Therefore, it follows from Lemmae~\ref{lem401} and \ref{lem404} that
$$
\int_{\Omega} |Du|^{p} w(x) \; dx \leq c \left(\int_{\Omega} |F|^{p} w(x) \; dx
+ w(\Omega)\right),
$$
which implies the desired estimate \eqref{105} through the Banach inverse
mapping theorem. This completes the proof of Theorem~\ref{thm104}.\hfill\qed

\section{Morrey regularity of the weak solution}\label{sec6}
\setcounter{equation}{0}
\setcounter{thm}{0}

We will apply now our main result Theorem~\ref{thm104} in obtaining gradient estimates in Morrey spaces for the weak solutions to \eqref{000}.

Let us start, first of all, with recalling the definition of the Morrey spaces. Given a bounded domain $\Omega\subset\mathbb{R}^n,$ $p\in(1,\infty)$ and $\lambda\in(0,n),$ a function $f\in L^p(\Omega)$ is said to belong to the Morrey space $L^{p,\lambda}(\Omega)$ if
$$
\|f\|_{L^{p,\lambda}(\Omega)}=\sup_{x_0\in\Omega,\, r\in(0,\mathrm{diam\,}\Omega)}
\left( \frac{1}{r^{\lambda}} \int_{B_r(x_0)\cap\Omega} |f(x)|^p\; dx\right)^{1/p}
<\infty,
$$
where $B_r(x_0)$ is a ball centered at $x_0$ and of radius $r>0.$  The above quantity defines a norm under which $L^{p,\lambda}(\Omega)$ becomes a Banach space. The limit cases $\lambda=0$ and $\lambda=n$ give rise to $L^p(\Omega)$ and $L^\infty(\Omega),$ respectively.

The following result extends the $W^{1,p}(\Omega)$-regularity theory of
\eqref{000} to the settings of Morrey spaces.
\begin{thm}\label{thm6}
Assume \eqref{001} and \eqref{002}. Given $p \in (2, \infty)$ and  $\lambda\in(0,n),$ there exists a small
positive constant $\delta$  and a positive constant $c,$ depending on $n,$ $L,$ $\nu,$ $p,$ $\lambda$ and $\Omega,$ such that if $(A, \Omega)$ is
($\delta,R)$-vanishing of codimension $1$ and $F\in L^{p,\lambda}(\Omega,\mathbb{R}^n),$ then the unique
weak solution $u \in H^1_0(\Omega)$ of the Dirichlet problem \eqref{000} satisfies $Du\in L^{p,\lambda}(\Omega,\mathbb{R}^n)$ with the estimate
\begin{equation}\label{601}
\|Du\|_{L^{p,\lambda}(\Omega,\mathbb{R}^n)}
\leq c \|F\|_{L^{p,\lambda}(\Omega,\mathbb{R}^n)}.
\end{equation}
\end{thm}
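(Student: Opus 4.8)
The plan is to deduce \eqref{601} directly from Theorem~\ref{thm104}, applying the weighted estimate \eqref{105} not to a single weight but to a two‑parameter family of truncated power weights adapted to the centre and the radius of the ball on which we want to control $Du$. Fix $x_0\in\Omega$ and $\rho\in(0,\operatorname{diam}\Omega)$, pick once and for all an exponent $\gamma$ with $\lambda<\gamma<n$ (for instance $\gamma=(\lambda+n)/2$; this is where $\lambda<n$ is used), and set
\[
w_{x_0,\rho}(x)=\left(\frac{\rho}{\max\{|x-x_0|,\rho\}}\right)^{\gamma},\qquad x\in\mathbb{R}^n .
\]
Thus $w_{x_0,\rho}\equiv 1$ on $B_\rho(x_0)$, while $w_{x_0,\rho}(x)=(\rho/|x-x_0|)^{\gamma}$ for $|x-x_0|>\rho$.

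The first step is to observe that every $w_{x_0,\rho}$ lies in $A_{p/2}$ with Muckenhoupt characteristic $[w_{x_0,\rho}]_{p/2}$ equal to one and the same number $M=M(n,p,\lambda)$, independent of $x_0$ and $\rho$. Indeed, the substitution $x=x_0+\rho z$ gives $w_{x_0,\rho}(x_0+\rho z)=\max\{|z|,1\}^{-\gamma}$, and the $A_{p/2}$ characteristic is invariant under translations and dilations; the truncated power weight $\max\{|z|,1\}^{-\gamma}$ is an $A_{p/2}$ weight precisely because $0<\gamma<n$ (the origin singularity is removed, so only the decay $|z|^{-\gamma}$ at infinity matters, and $|z|^{-\gamma}\in A_{p/2}$ iff $-n<-\gamma<n(p/2-1)$). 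Hence the threshold $\delta$ and the constant $c$ produced by Theorem~\ref{thm104} for the value $M$ of the weight characteristic are single numbers depending only on $\nu,L,n,p,\lambda$ and $\Omega$; we fix $\delta$ this way. Since $F\in L^{p,\lambda}(\Omega,\mathbb{R}^n)\subset L^{p}(\Omega,\mathbb{R}^n)\subset L^{2}(\Omega,\mathbb{R}^n)$, problem \eqref{000} has its unique weak solution $u\in H^1_0(\Omega)$, and (as checked below) $\int_\Omega|F|^{p}w_{x_0,\rho}\,dx<\infty$, so Theorem~\ref{thm104} applies to each $w_{x_0,\rho}$ and yields
\[
\int_{\Omega}|Du|^{p}\,w_{x_0,\rho}(x)\,dx\;\le\;c\int_{\Omega}|F|^{p}\,w_{x_0,\rho}(x)\,dx .
\]

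It remains to read \eqref{601} off from this inequality. On the left, since $w_{x_0,\rho}\equiv1$ on $B_\rho(x_0)$, we bound from below by $\int_{B_\rho(x_0)\cap\Omega}|Du|^{p}\,dx$. On the right we split $\Omega$ into $B_\rho(x_0)\cap\Omega$ and the dyadic annuli $A_j=\{x:2^j\rho\le|x-x_0|<2^{j+1}\rho\}$, $j\ge0$; there $w_{x_0,\rho}\le 2^{-j\gamma}$ and $A_j\subset B_{2^{j+1}\rho}(x_0)$, so the definition of the Morrey norm gives $\int_{A_j\cap\Omega}|F|^{p}\,dx\le(2^{j+1}\rho)^{\lambda}\|F\|_{L^{p,\lambda}}^{p}$, and summing,
\[
\int_{\Omega}|F|^{p}\,w_{x_0,\rho}\,dx\;\le\;\rho^{\lambda}\|F\|_{L^{p,\lambda}}^{p}\Big(1+2^{\lambda}\sum_{j\ge0}2^{j(\lambda-\gamma)}\Big)\;=\;c(n,\lambda)\,\rho^{\lambda}\|F\|_{L^{p,\lambda}}^{p},
\]
the series converging exactly because $\gamma>\lambda$ (this also shows the right‑hand side above is finite). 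Combining the last three displays and taking the supremum over $x_0\in\Omega$ and $\rho\in(0,\operatorname{diam}\Omega)$ yields \eqref{601}.

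The only place that requires care, and the genuine content of the proof, is the \emph{choice of the weight family}: the decay exponent $\gamma$ must be taken strictly between $\lambda$ and $n$, so that $w_{x_0,\rho}$ is simultaneously an $A_{p/2}$ weight with scale‑independent characteristic (which forces $\gamma<n$) and makes the dyadic tail $\sum_j 2^{j(\lambda-\gamma)}$ summable (which forces $\gamma>\lambda$); the hypothesis $\lambda\in(0,n)$ is precisely what permits both. Everything else — the uniformity of the constants $\delta,c$ in Theorem~\ref{thm104} over the family, the lower bound on $B_\rho(x_0)$, and the annular estimate of the tail — is routine.
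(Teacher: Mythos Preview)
Your proof is correct and follows essentially the same route as the paper: both apply Theorem~\ref{thm104} to a family of weights indexed by $(x_0,\rho)$ that equal $1$ on $B_\rho(x_0)$ and decay like $(\rho/|x-x_0|)^{\gamma}$ for some exponent $\gamma\in(\lambda,n)$, then control the right-hand side by a dyadic annular decomposition whose tail $\sum_j 2^{j(\lambda-\gamma)}$ converges. The only cosmetic difference is the realisation of the weight: the paper takes $w=\big(\mathcal{M}\chi_{B_\rho(x_0)}\big)^{\sigma}$ with $\sigma\in(\lambda/n,1)$ and cites Coifman--Rochberg to get the $A_1\subset A_{p/2}$ property for free, whereas you write down the pointwise comparable truncated power weight $\big(\rho/\max\{|x-x_0|,\rho\}\big)^{\gamma}$ and argue its $A_{p/2}$ membership directly. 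Since $\mathcal{M}\chi_{B_\rho(x_0)}(x)\approx \min\{1,(\rho/|x-x_0|)^n\}$, the two weights differ only by a universal multiplicative constant (with $\gamma=n\sigma$), and the subsequent computations are identical.
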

\begin{proof}
Extend $F$ as zero outside $\Omega$ and fix arbitrary $x_0\in\Omega$ and $r>0.$ Set $\chi_{B_r(x_0)}$ for the characteristic function of the ball $B_r(x_0)$ and $\mathcal{M}\chi_{B_r(x_0)}(x)$ for its Hardy--Littlewood maximal function.

It follows from Proposition~2 in \cite{CR} that if $\sigma\in(0,1)$ then
$\left(\mathcal{M}\chi_{B_r(x_0)}(x)\right)^\sigma$ belongs to the Muckenhoupt class $A_1,$ that is,
$$
\mathcal{M}\Big(\left(\mathcal{M}\chi_{B_r(x_0)}(x)\right)^\sigma\Big) \leq
c \left(\mathcal{M}\chi_{B_r(x_0)}(x)\right)^\sigma\quad \textrm{for a.a.}\ x\in \mathbb{R}^n,
$$
whence $\left(\mathcal{M}\chi_{B_r(x_0)}(x)\right)^\sigma\in A_{\frac{p}{2}}$ for each $p\in(2,\infty)$ with
$$
\left[\left(\mathcal{M}\chi_{B_r(x_0)}(x)\right)^\sigma\right]_{\frac{p}{2}}=c(n,p,\sigma).
$$

Choosing $\sigma\in\left(\frac{\lambda}{n},1\right),$ it follows from Theorem~\ref{thm104} that there exist constants $\delta$ and $c,$ depending on $n,$ $L,$ $\nu,$ $p,$ $\lambda$ and $\Omega,$ such that if $(A,\Omega)$ is $(\delta,R)$-vanishing of codimension $1,$ then
\begin{align}\label{602}
\int_{B_r(x_0)\cap\Omega} |Du&(x)|^p\; dx =  \int_\Omega |Du(x)|^p \left(\chi_{B_r(x_0)}(x)\right)^\sigma \; dx\\
\nonumber
 \leq\ \, & \ \int_\Omega |Du(x)|^p \left(\mathcal{M}\chi_{B_r(x_0)}(x)\right)^\sigma \; dx\\
\nonumber
 \stackrel{ \eqref{105}}{\leq } & \ c\int_\Omega |F(x)|^p \left(\mathcal{M}\chi_{B_r(x_0)}(x)\right)^\sigma \; dx\\
\nonumber
 = \ \, & \ c\int_{\mathbb{R}^n} |F(x)|^p \left(\mathcal{M}\chi_{B_r(x_0)}(x)\right)^\sigma \; dx\\
\nonumber
= \ \, & \ c\left( \underbrace{\int_{B_{2r}(x_0)} |F(x)|^p \left(\mathcal{M}\chi_{B_r(x_0)}(x)\right)^\sigma \; dx}_{I_0(r,x_0)}\right.\\
\nonumber
 & \qquad +\left.\sum_{k=1}^\infty \underbrace{\int_{B_{2^{k+1}r}(x_0)\setminus B_{2^{k}r}(x_0)} |F(x)|^p \left(\mathcal{M}\chi_{B_r(x_0)}(x)\right)^\sigma \; dx}_{I_k(r,x_0)}\right)
\end{align}
after the dyadic decomposition $\mathbb{R}^n=B_{2r}(x_0)\bigcup\Big(\bigcup_{k=1}^\infty B_{2^{k+1}r}(x_0)\setminus B_{2^{k}r}(x_0)\Big).$

We have $\mathcal{M}\chi_{B_r(x_0)}(x)\leq 1$ a.e. $\mathbb{R}^n$ whence
\begin{equation}\label{603}
I_0(r,x_0)\leq \int_{B_{2r}(x_0)} |F(x)|^p \; dx\leq c(n) r^\lambda \|F\|_{L^{p,\lambda}(\Omega,\mathbb{R}^n)}^p.
\end{equation}

To estimate $I_k(r,x_0),$ we note that for each $x\in B_{2^{k+1}r}(x_0)\setminus B_{2^{k}r}(x_0)$ and each $\rho>0$ we have
\begin{equation}\label{604}
\frac{1}{|B_\rho(x)|} \int_{B_\rho(x)} \left|\chi_{B_r(x_0)}(x)\right|\; dx
\leq \frac{|B_r(x_0)|}{|B_\rho(x)|}=\frac{r^n}{\rho^n}.
\end{equation}
Moreover, having in mind $x\in B_{2^{k+1}r}(x_0)\setminus B_{2^{k}r}(x_0),$ the term on the left-hand side above is positive only for values of $\rho$ greater than $2^kr-r.$ This way, the obvious inequality $2^k-1\geq 2^{k-1}$ $\forall k\geq 1$ reduces \eqref{604} to
$$
\frac{1}{|B_\rho(x)|} \int_{B_\rho(x)} \left|\chi_{B_r(x_0)}\right|(x)\; dx
\leq \frac{r^n}{2^{n(k-1)}r^n}=\frac{1}{2^{n(k-1)}}
$$
and taking supremum with respect to $\rho>0$ we obtain
$$
\left(\mathcal{M}\chi_{B_r(x_0)}(x)\right)^\sigma \leq
\frac{1}{2^{\sigma n(k-1)}}.
$$

Therefore,
\begin{align}\label{605}
I_k(r,x_0) \leq&\ \frac{1}{2^{\sigma n(k-1)}} \int_{B_{2^{k+1}r}(x_0)\setminus B_{2^{k}r}(x_0)} |F(x)|^p \; dx\\
\nonumber
\leq &\ \frac{1}{2^{\sigma n(k-1)}} \int_{B_{2^{k+1}r}(x_0)} |F(x)|^p \; dx\\
\nonumber
\leq &\ \frac{(2^{k+1}r)^\lambda}{2^{\sigma n(k-1)}} \frac{1}{(2^{k+1}r)^\lambda} \int_{B_{2^{k+1}r}(x_0)} |F(x)|^p \; dx\\
\nonumber
\leq &\ 2^{\lambda+\sigma n} (2^{\lambda-\sigma n})^k r^\lambda
\|F\|_{L^{p,\lambda}(\Omega,\mathbb{R}^n)}^p.
\end{align}

Substitution of \eqref{603} and \eqref{605} into \eqref{602} yields
$$
\int_{B_r(x_0)\cap\Omega} |Du(x)|^p\; dx\leq c r^\lambda \left(
\sum_{k=0}^\infty (2^{\lambda-\sigma n})^k\right)
\|F\|_{L^{p,\lambda}(\Omega,\mathbb{R}^n)}^p=c r^\lambda \|F\|_{L^{p,\lambda}(\Omega,\mathbb{R}^n)}^p
$$
with a convergent series thanks to the choice $\sigma\in\left(\frac{\lambda}{n},1\right).$ Dividing the both sides of the last inequality by $r^\lambda$ and taking the supremum with respect to $x_0\in\Omega$ and $r>0$ gives the desired estimate  \eqref{601}.
\end{proof}

The gradient estimate \eqref{601} and the known properties of functions with Morrey regular gradient (see Lemmae~3.III and 3.IV in \cite{Cm}) imply immediately better integrability and H\"{o}lder continuity of the weak solution to \eqref{000} for appropriate values of $p$ and $\lambda.$
Namely,
\begin{crlr}
Under the assumptions of Theorem~\ref{thm6} let $u\in H^1_0(\Omega)$ be the weak solution of \eqref{000}. Then
\begin{enumerate}
\item $u\in L^{\frac{np}{n-p},\frac{n\lambda}{n-p}}(\Omega)\subset L^{p,\lambda+p}(\Omega)$ if $p+\lambda<n;$
\item $u\in L^{p',\lambda'}(\Omega)$ for any $p'<\infty$ and any $\lambda'<n,$ if $p+\lambda=n;$
\item $u\in C^{0,1-\frac{n-\lambda}{p}}(\overline\Omega)$ if $p+\lambda>n.$
\end{enumerate}
\end{crlr}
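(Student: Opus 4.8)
The plan is to derive all three conclusions directly from Theorem~\ref{thm6}, combined with the classical embedding theorems for Sobolev functions with Morrey-regular gradient, due to Campanato (Lemmae~3.III and 3.IV in \cite{Cm}). First I would apply Theorem~\ref{thm6} to obtain $Du\in L^{p,\lambda}(\Omega,\mathbb{R}^n)$ together with the bound \eqref{601}. Since $u\in H^1_0(\Omega)$, its zero extension $\bar u$ lies in $H^1(\mathbb{R}^n)$, is compactly supported, and satisfies $D\bar u=\overline{Du}$; hence $\bar u\in W^{1,p}(B)$ for any ball $B\supset\overline\Omega$, and because $|D\bar u|=|Du|\,\chi_\Omega$ pointwise, for every ball $B_\rho(x)$ meeting $\Omega$ one has $\int_{B_\rho(x)}|D\bar u|^p\,dx\le\int_{B_{2\rho}(z)\cap\Omega}|Du|^p\,dx\le 2^\lambda\rho^\lambda\,\|Du\|_{L^{p,\lambda}(\Omega)}^p$ for a suitable $z\in\Omega$. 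Thus $D\bar u\in L^{p,\lambda}(B)$ with norm controlled by $\|Du\|_{L^{p,\lambda}(\Omega)}$, and it suffices to feed $\bar u$ into Campanato's embeddings on the ball $B$, which trivially satisfies their geometric hypotheses, and finally restrict back to $\overline\Omega$.

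In the subcritical range $p+\lambda<n$, the Sobolev--Morrey embedding gives $\bar u\in L^{p^*,\lambda^*}(B)$ with $p^*=\frac{np}{n-p}$ and $\lambda^*=\frac{n\lambda}{n-p}$, whence $u\in L^{\frac{np}{n-p},\frac{n\lambda}{n-p}}(\Omega)$. The stated inclusion into $L^{p,\lambda+p}(\Omega)$ then follows from the elementary nesting property of Morrey spaces, since the effective dimensions coincide, $\frac{n-\lambda^*}{p^*}=\frac{n-p-\lambda}{p}=\frac{n-(\lambda+p)}{p}$, and $L^{q,\mu}(\Omega)\subset L^{p_1,\mu_1}(\Omega)$ whenever $q\ge p_1$ and $\frac{n-\mu}{q}\le\frac{n-\mu_1}{p_1}$, which is a one-line Hölder estimate on each ball. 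In the borderline range $p+\lambda=n$, Campanato's result places $\bar u$ in $\mathrm{BMO}$, which on a bounded set is contained in every $L^{p',\lambda'}$ with $p'<\infty$ and $\lambda'<n$ (the logarithmic growth of the means furnished by John--Nirenberg is absorbed by the deficit $n-\lambda'>0$), which is conclusion (2). In the supercritical range $p+\lambda>n$, i.e.\ $n-p<\lambda<n$, Campanato's embedding yields $\bar u\in C^{0,1-\frac{n-\lambda}{p}}(B)$; restricting to $\overline\Omega$ and using $u=\bar u$ there gives conclusion (3), with global Hölder continuity up to the boundary coming for free from the continuity of the extension.

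I do not anticipate a genuine obstacle: the argument is a short application of Theorem~\ref{thm6} followed by citation of the Morrey/Campanato embeddings, and the only points requiring a line of verification are that the zero extension does not inflate the Morrey norm (handled above) and the exponent arithmetic in the nesting inclusion of case~(1). If one prefers to avoid the extension, one may instead invoke Lemmae~3.III and 3.IV of \cite{Cm} directly on $\Omega$, their hypotheses being secured by the measure density property \eqref{mds} of Reifenberg flat domains recorded in Remark~\ref{remnew}~3.
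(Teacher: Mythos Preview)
Your approach is correct and matches the paper's own treatment: the paper does not give a separate proof of this corollary but simply states that it follows immediately from the gradient estimate \eqref{601} of Theorem~\ref{thm6} together with Lemmae~3.III and~3.IV in \cite{Cm}. Your added details about the zero extension and the exponent arithmetic are fine elaborations of what the paper leaves implicit, and your closing remark---that one may alternatively invoke Campanato's lemmas directly on $\Omega$ via the measure density property \eqref{mds}---is exactly in the spirit of the paper's setup.
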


It is worth noting that the global H\"{o}lder continuity with \textit{some} exponent for the weak solutions to divergence form elliptic equations with \textit{only} measurable coefficients is the essence of the celebrated De~Giorgi regularity result (\cite{DG}) when $F\in L^p(\Omega,\mathbb{R}^n)$ with $p>n,$ and that of Morrey \cite{Mo0} when $F\in L^{p,\lambda}(\Omega,\mathbb{R}^n)$ with $p+\lambda>n,$ both holding in domains with H\"{o}lder continuous boundaries. Apart from the fact that we are dealing with Reifenberg flat domains, in our more restricted situation (coefficients which are measurable in one variable and small BMO in the remaining ones) we provide
 an explicit expression for the H\"{o}lder exponent of the weak solution.

The results from this section will be applied in a forthcoming paper to the study of Morrey regularity of weak solutions to quasilinear divergence form elliptic equations with controlled growths of the nonlinearities.

\bibliographystyle{amsplain}

\end{document}